\def\@settitle{\begin{center}%
		\baselineskip14\p@\relax
		\normalfont\LARGE\bfseries
		\@title
	\end{center}%
}
\def\section{\@startsection{section}{1}%
	\z@{.7\linespacing\@plus\linespacing}{.5\linespacing}%
	{\normalfont\large\bfseries}}
\def\subsection{\@startsection{subsection}{2}%
	\z@{.5\linespacing\@plus.7\linespacing}{.5\linespacing}%
	{\normalfont\bfseries}}
\def\@setauthors{%
  \begingroup
  \def\thanks{\protect\thanks@warning}%
  \trivlist
  \centering\footnotesize \@topsep30\p@\relax
  \advance\@topsep by -\baselineskip
  \item\relax
  \author@andify\authors
  \def\\{\protect\linebreak}%
  \authors%
  \ifx\@empty\contribs
  \else
    ,\penalty-3 \space \@setcontribs
    \@closetoccontribs
  \fi
  \endtrivlist
  \endgroup
}
\definecolor{darkblue}{rgb}{0.0, 0.0, 0.45}
\definecolor{darkgreen}{rgb}{0.0, 0.45, 0}
\date{\today}
\theoremstyle{plain}
\newtheorem{Thm}{Theorem}[section]
\newtheorem{Prop}[Thm]{Proposition}
\newtheorem{Lem}[Thm]{Lemma}
\newtheorem{Cor}[Thm]{Corollary}
\newtheorem{As}[Thm]{Assumption}
\newtheorem{Rem}[Thm]{Remark}
\newcommand{\R}{\mathbb{R}}
\newcommand{\Ru}{\overline {\R}}
\newcommand{\ra}{\rightarrow}
\newcommand{\ol}[1]{\overline{#1}}
\newcommand{\wt}{\widetilde}
\newcommand{\wh}{\widehat}
\newcommand{\Let}{\coloneqq}
\DeclareMathOperator*{\argmin}{\arg\!\min}
\DeclareMathOperator*{\argmax}{\arg\!\max}
\newcommand{\tr}{^{\top}}
\newcommand{\norm}[1]{\left\Vert #1 \right\Vert}
\newcommand{\inner}[2]{\left\langle #1, #2 \right\rangle }
\newcommand{\opt}{_\star}
\newcommand{\PP}{\mathds{P}}
\newcommand{\EE}{\mathds{E}}
\newcommand{\setc}[1]{\mathbb{#1}}
\newcommand{\setd}[1]{\mathbb{#1}^{\mathrm{d}}}
\newcommand{\setg}[1]{\mathbb{#1}^{\mathrm{g}}}
\newcommand{\setsg}[1]{\mathbb{#1}^{\mathrm{g}}_{\mathrm{sub}}}
\newcommand{\diam}[1]{\Delta_{#1}}
\newcommand{\lftc}[1]{#1^{*}}
\newcommand{\lftd}[1]{#1^{\mathrm{d}*}}
\newcommand{\lftdd}[1]{#1^{\mathrm{d}*\mathrm{d}}}
\newcommand{\bcc}[1]{#1^{**}}
\newcommand{\bcdd}[1]{#1^{\mathrm{d}*\mathrm{d}*}}
\newcommand{\lerp}[1]{\wt{#1}}
\newcommand{\llerp}[1]{\ol{#1}}
\newcommand{\disc}[1]{#1^{\mathrm{d}}}
\newcommand{\dynw}{g}
\newcommand{\dyn}{f}
\newcommand{\dynx}{f_\mathrm{s}}
\newcommand{\cost}{C}
\newcommand{\costx}{C_\mathrm{s}}
\newcommand{\costu}{C_\mathrm{i}}
\newcommand{\dpo}{\mathcal{T}}
\newcommand{\ddpo}{\mathcal{T}^{\mathrm{d}}}
\newcommand{\cdpo}{\widehat{\mathcal{T}}}
\newcommand{\dcdpo}{\widehat{\mathcal{T}}^{\mathrm{d}}}
\DeclareMathOperator{\dist}{d}
\DeclareMathOperator{\dish}{d_H}
\DeclareMathOperator{\ord}{\mathcal{O}}
\DeclareMathOperator{\co}{co}
\DeclareMathOperator{\lip}{L}
\DeclareMathOperator{\dom}{dom}
\DeclareMathOperator{\rng}{rng}
\title[]{Fast Approximate Dynamic Programming \\ for Infinite-Horizon Markov Decision Processes}
\author[]{M.~A.~S.~Kolarijani, G.~F.~Max, and P.~Mohajerin Esfahani}
\thanks{The authors are with Delft Center for Systems and Control, Delft University of Technology, 
Delft, The Netherlands. Email: \texttt{\{M.A.SharifiKolarijani, G.F.Max, P.MohajerinEsfahani\}@tudelft.nl}.}
\thanks{This research is part of a project that has received funding from the
European Research Council (ERC) under the grant TRUST-949796.} 
\thanks{The authors are grateful to anonymous reviewers of the 35th Conference on Neural Information Processing Systems (NeurIPS 2021) for their comments concerning the three remarks in Section~\ref{sec:conclusion}.} 
\thanks{This arXiv article is the extended and slightly modified version of the article appeared in the 35th Conference on Neural Information Processing Systems (NeurIPS 2021) under the same title.}
\begin{document}

\begin{abstract}
In this study, we consider the infinite-horizon, discounted cost, optimal control of stochastic nonlinear systems with separable cost and constraints in the state and input variables. 
Using the linear-time Legendre transform, we propose a novel numerical scheme for the implementation of the corresponding value iteration (VI) algorithm in the conjugate domain. 
Detailed analyses of the convergence, time complexity, and error of the proposed algorithm are provided. 
In particular, with a discretization of size $X$ and $U$ for the state and input spaces, respectively, the proposed approach reduces the time complexity of each iteration in the VI algorithm from $\ord(XU)$ to $\ord (X+U)$, by replacing the minimization operation in the primal domain with a simple addition in the conjugate domain. 

\smallskip
\noindent \textsc{Keywords:} Stochastic optimal control; value iteration; input-affine systems; Fenchel duality; computational complexity.

\end{abstract}

\maketitle

\section{Introduction}
\label{sec:intro}

Value iteration (VI) is one of the most basic and widespread algorithms employed for tackling problems in reinforcement learning (RL) and optimal control~\cite{Bertsekas19,Sutton18} formulated as Markov decision processes (MDPs). 
The VI algorithm simply involves the consecutive applications of the dynamic programming (DP) operator 
\begin{equation*} \label{eq:BE}
\dpo J (x_t)  = \min_{u_t} \big\{ \cost(x_t,u_t) + \gamma \EE J(x_{t+1}) \big\},
\end{equation*}
where $\cost(x_t,u_t)$ is the cost of taking the control action~$u_t$ at the state~$x_t$. 
This fixed point iteration is known to converge to the optimal value function for discount factors $\gamma \in (0,1)$.  
However, this algorithm suffers from a high computational cost for large-scale finite state spaces. 
For problems with a continuous state space, the DP operation becomes an infinite-dimensional optimization problem, rendering the exact implementation of VI impossible in most cases. 
A common solution is to incorporate function approximation techniques and compute the output of the DP operator for a finite sample (i.e., a discretization) of the underlying continuous state space. 
This approximation again suffers from a high computational cost for fine discretizations of the state space, particularly in high-dimensional problems. We refer the reader to~\cite{Bertsekas19, Pow11} for various approximation schemes for the implementation of VI. 
 
For some problems, however, it is possible to partially address this issue by using duality theory, i.e., approaching the minimization problem in the conjugate domain. 
In particular, as we will see in Section~\ref{sec:conj DP}, the minimization in the primal domain in DP can be transformed to a simple addition in the dual domain, at the expense of three conjugate transforms. 
However, the proper application of this transformation relies on efficient numerical algorithms for conjugation. 
Fortunately, such an algorithm, known as linear-time Legendre transform (LLT), has been developed in the late 90s~\cite{Lucet97}. 
Other than the classical application of LLT (and other fast algorithms for conjugate transform) in solving the Hamilton-Jacobi equation \cite{Achdou14, Corrias96, Costes14}, these algorithms are used in image processing \cite{Lucet09}, thermodynamics \cite{Contento15}, and optimal transport \cite{Jacobs19}.

The application of conjugate duality for the DP problem is not new and goes back to Bellman~\cite{Bell62}. 
Further applications of this idea for reducing the computational complexity were later explored in~\cite{Esog90,Klein91}. 
However, surprisingly, the application of LLT for solving discrete-time optimal control problems has been limited. 
In particular, in \cite{Caprio16}, the authors propose the ``fast value iteration'' algorithm 
(without a rigorous analysis of the complexity and error of the proposed algorithm) 
for a particular class of infinite-horizon optimal control problems with state-independent stage cost $\cost (x,u) = \cost (u)$ and deterministic linear dynamics $x_{t+1} = Ax_t + B u_t$, where $A$ is a non-negative, monotone, invertible matrix. 
More recently, in \cite{Kolari21dCDParxiv}, we also considered the application of LLT for solving the DP operation in finite-horizon, optimal control of input-affine dynamics $x_{t+1} = \dynx (x_t) + B u_t$ with separable cost $\cost (x,u) = \costx (x)+  \costu (u)$. 
In particular, we introduced the ``discrete conjugate DP'' (d-CDP) operator, and provided a detailed analysis of its complexity and error. 
As we will discuss shortly, the current study is an extension of the corresponding d-CDP algorithm that, among other things, considers infinite horizon, discounted cost problems. 
We note that the algorithms developed in~\cite{Felzen12,Lucet09} for ``distance transform'' can also potentially tackle the optimal control problems similar to the ones of interest in the current study. 
In particular, these algorithms require the stage cost to be reformulated as a convex function of the ``distance'' between the current and next states. 
While this property might arise naturally, it
can generally be restrictive, as it is in the problem class considered in this study. 
Another line of work that is closely related to ours involves utilizing max-plus algebra in solving deterministic, continuous-state, continuous-time, optimal control problems; see, e.g.,~\cite{Akian08, McEn03}. 
These works exploit the compatibility of the DP operation with max-plus operations, 
and approximate the value function as a max-plus linear combination. 
Recently, in~\cite{Bach19,Bach20}, the authors used this idea to propose an approximate VI algorithm for continuous-state, deterministic MDPs. 
In this regard, we note that the proposed approach in the current study also involves approximating the value function as a max-plus linear combination, namely, the maximum of affine functions. 
The key difference is however that by choosing a grid-like (factorized) set of slopes for the linear terms (i.e., the basis of the max-plus linear combination), we take advantage of the linear time complexity of LLT in computing the constant terms (i.e., the coefficients of the max-plus linear combination).

\noindent\textbf{Main contribution.} 
In this study, we focus on an approximate implementation of VI involving discretization of the state and input spaces for solving the optimal control problem of discrete-time systems, with continuous state-input space. 
Building upon our earlier work~\cite{Kolari21dCDParxiv}, we employ conjugate duality to speed up VI for problems with separable stage cost (in state and input) and input-affine dynamics. 
We propose the \emph{conjugate} VI (ConjVI) algorithm based on a modified version of the d-CDP operator introduced in~\cite{Kolari21dCDParxiv}, 
and extend the existing results in three directions: 
We consider \emph{infinite-horizon, discounted cost} problems with \emph{stochastic dynamics}, while incorporating a \emph{numerical scheme for approximation of the conjugate of input cost}. 
The main contributions of this paper are as follows:

\begin{itemize}
\item[(i)] we provide sufficient conditions for the convergence of ConjVI (Theorem~\ref{thm:convergence});
\item[(ii)] we show that ConjVI can achieve a linear time complexity of $\ord (X+U)$ in each iteration (Theorem~\ref{thm:complexity}), compared to the quadratic time complexity of $\ord(XU)$ of the standard VI, where $X$ and $U$ are the cardinalities of the discrete state and input spaces, respectively; 
\item[(iii)] we analyze the error of ConjVI (Theorem~\ref{thm:error}) and use that result to provide specific guidelines on the construction of the discrete dual domain (Section~\ref{subsec:grid construction}); 
\item[(iv)] we provide a MATLAB package for the implementation of the proposed ConjVI algorithm~\cite{Kolari21ConjVIMAT}.  
\end{itemize}

\noindent\textbf{Paper organization.} The problem statement and its standard solution via the VI algorithm (in primal domain) are presented in Section~\ref{sec:prim DP}. 
In Section~\ref{sec:conj DP}, we present our main results: We begin with presenting the class of problems that are of interest, and then introduce the alternative approach for VI in the conjugate domain and its numerical implementation. 
The theoretical results on the convergence, complexity, and error of the proposed algorithm along with the guidelines on the construction of dual grids are also provided in this section. 
In Section~\ref{sec:numerical ex}, we compare the performance of the ConjVI with that of VI algorithm through three numerical examples. 
Section~\ref{sec:conclusion} concludes the paper with some final remarks.
All the technical proofs are provided in Appendix~\ref{sec:proofs}.

\noindent\textbf{Notations.} We use $\R$ and $\ol{\R} = \R \cup \{\infty\}$ to denote the real line and the extended reals, respectively, and $\EE_w [\cdot]$ to denote expectation with respect (w.r.t.) to the random variable $w$. 
The standard inner product in $\R^n$ and the corresponding induced 2-norm are denoted by $\inner{\cdot}{\cdot}$ and $\norm{\cdot}_2$, respectively. 
We also use $\norm{\cdot}_2$ to denote the operator norm (w.r.t. the 2-norm) of a matrix;
i.e., for $A \in \R^{m\times n}$, we denote $\norm{A}_2 = \sup \{ \norm{Ax}_2 : \norm{x}_2 = 1\}$.
The infinity-norm is denoted by $\norm{\cdot}_{\infty}$.

Continuous (infinite, uncountable) sets are denoted as $\setc{X}, \setc{Y}, \ldots$. 
For \emph{finite} (discrete) sets, we use the superscript~$\mathrm{d}$ as in $\setd{X}, \setd{Y}, \ldots$ to differentiate them from infinite sets. 
Moreover, we use the superscript~$\mathrm{g}$ to differentiate \emph{grid-like} finite sets. 
Precisely, a grid~$\setg{X} \subset \R^n$ is the Cartesian product $\setg{X} = \Pi_{i=1}^{n} \setg{X}_{i} = \setg{X}_{1} \times \ldots \times \setg{X}_{n}$, where $\setg{X}_{i}$ is a finite subset of $\R$. 
We also use $\setsg{X}$ to denote the \emph{sub-grid} of $\setg{X}$ derived by omitting the smallest and the largest elements of $\setg{X}$ in each dimension.
The cardinality of a finite set~$\setd{X}$ or $\setg{X}$ is denoted by $X$. 
Let $\setc{X}, \setc{Y}$ be two arbitrary sets in $\R^n$.
The convex hull of $\setc{X}$ is denoted by $\co (\setc{X})$. 
The diameter of~$\setc{X}$ is defined as $\Delta_{\setc{X}} \Let \sup_{x,\tilde{x} \in \setc{X}} \norm{x-\tilde{x}}_2$.  
We use $\dist (\setc{X},\setc{Y}) \Let \inf_{x \in \setc{X}, y \in \setc{Y}} \norm{x-y}_2$ to denote the distance between $\setc{X}$ and $\setc{Y}$. 
The one-sided Hausdorff distance \emph{from}~$\setc{X}$ \emph{to}~$\setc{Y}$ is defined as $\dish(\setc{X}, \setc{Y}) \Let \sup_{x \in \setc{X}} \inf_{y \in \setc{Y}} \norm{x-y}_2$. 

Let $h: \R^n \ra \Ru$ be an extended real-valued function with a non-empty effective domain~$\dom(h) = \setc{X} \Let \{x \in \R^n: h(x) < \infty \}$, and range $\rng (h) = \max_{x \in \setc{X}} h(x) -\min_{x \in \setc{X}} h(x)$. 
We use $\disc{h}: \setd{X} \ra \Ru$ to denote the discretization of $h$, where $\setd{X}$ is a finite subset of $\R^n$. 
Whether a function is discrete is usually also clarified by providing its domain explicitly. 
We particularly use this notation in combination with a second operation to emphasize that the second operation is applied on the discretized version of the operand. 
E.g., we use $\lerp{\disc{h}}:\R^n \ra \Ru$ to denote a \emph{generic extension} of $\disc{h}$. 
If the domain~$\setd{X} = \setg{X}$ of $\disc{h}$ is grid-like, we then use $\llerp{\disc{h}}$ (as opposed to $\lerp{\disc{h}}$) for the extension using \emph{multi-linear interpolation and extrapolation (LERP)}.
The Lipschtiz constant of~$h$ over a set~$\setc{Y} \subset \dom (h)$ is denoted by $\lip (h; \setc{Y}) \Let  \sup_{x,y \in \setc{Y}} |h(x)-h(y)| / \norm{x-y}_2$.  
We also denote $\lip (h) \Let \lip \big(h; \dom (h)\big)$ and 
$\setc{L}(h) \Let \Pi_{i=1}^{n} \left[\lip_i^-(h), \lip_i^+(h)\right]$, where 
$\lip_i^+(h)$ (resp. $\lip_i^-(h)$) is the maximum (resp. minimum) slope of the function $h$ along the $i$-th dimension, 
The subdifferential of~$h$ at a point~$x \in \setc{X}$ is defined as
$
\partial h(x) \Let \big\{y \in \R^n: h(\tilde{x}) \geq h(x) + \inner{y}{\tilde{x}-x}, \forall \tilde{x} \in \setc{X} \big\}
$. 
Note that $\partial h(x) \subseteq \setc{L}(h)$ for all $x \in \setc{X}$; in particular, $\setc{L}(h) = \cup_{x \in \setc{X}} \partial h(x)$ if $h$ is convex. 
The Legendre-Fenchel transform (convex conjugate) of~$h$ is the function~$\lftc{h}: \R^n \ra \ol{\R}$, defined by $\lftc{h}(y) = \sup_{x} \left\{ \inner{y}{x} - h(x) \right\}$. 
Note that the conjugate function~$\lftc{h}$ is convex by construction. 
We again use the notation $\lftd{h}$ to emphasize the fact that the domain of the underlying function is \emph{finite}, 
that is, $\lftd{h}(y) = \sup_{x \in \setd{X}} \left\{ \inner{y}{x} - h(x) \right\}$. 
The biconjugate and discrete biconjugate operators are defined accordingly and denoted by $\bcc{[\cdot]} = \lftc{[\lftc{[\cdot]}]}$ and $\bcdd{[\cdot]} = \lftd{[\lftd{[\cdot]}]}$, respectively. 

We report the complexities using the standard big-O notations $\ord$ and $\wt{\ord}$, where the latter hides the logarithmic factors. 
In this study, we are mainly concerned with the dependence of the computational complexities on \emph{the size of the finite sets} involved (discretization of the primal and dual domains). 
In particular, we ignore the possible dependence of the computational complexities on the dimension of the variables, unless they appear in the power of the size of those discrete sets; e.g., the complexity of a single evaluation of an analytically available function is taken to be of $\ord(1)$, regardless of the dimension of its input and output arguments.

\section{VI in primal domain}
\label{sec:prim DP}

We are concerned with the infinite-horizon, discounted cost, optimal control problems of the form
\begin{align*}
J\opt(x) = &\min \ \EE_{w_t} \left[ \sum_{t=0}^{\infty} \gamma^t C(x_t,u_t) \bigg| x_0 = x \right] \\
&\ \text{s.t.}  \ \ \ x_{t+1} = \dynw(x_t,u_t,w_t),\ x_t \in \setc{X},\ u_t \in \setc{U},\ w_t \sim \PP (\setc{W}), \quad \forall t \in \{0,1,\ldots\},
\end{align*}
where $x_t \in \R^n$, $u_t \in \R^m$, and $w_t \in \R^l$ are the state, input and disturbance variables at time $t$, respectively; 
$\gamma \in (0,1)$ is the discount factor; 
$C: \setc{X}\times\setc{U} \ra \R$ is the stage cost; 
$\dynw:\R^n\times\R^m\times\R^l \ra \R^n$ describes the dynamics; 
$\setc{X}\subset \R^n$ and $\setc{U}\subset \R^m$ describe the state and input constraints, respectively;
and, $\PP (\cdot)$ is the distribution of the disturbance over the support~$\setc{W} \subset \R^l$. 
Assuming the stage cost $C$ is bounded, the optimal value function solves the Bellman equation $J\opt = \dpo J\opt$, where $\dpo$ is the DP operator ($C$ and $J$ are extended to infinity outside their effective domains)~\cite[Prop.~1.2.2]{Bertsekas07}
\begin{equation} \label{eq:DP op}
\dpo J(x) \Let \min_{u} \left\{ \cost(x,u) + \gamma \cdot \EE_w J\big(\dynw(x,u,w)\big) \right\}, \quad \forall x \in \setc{X}.
\end{equation} 
Indeed, $\dpo$ is $\gamma$-contractive in the infinity-norm, i.e., $\norm{\dpo J_1 - \dpo J_2}_{\infty} \leq \gamma \norm{J_1 - J_2}_{\infty}$ \cite[Prop.~1.2.4]{Bertsekas07}. 
This property then gives rise to the VI algorithm $J_{k+1} = \dpo J_k$ which converges to $J\opt$ as $k \ra \infty$, for arbitrary initialization $J_0$. 
Moreover, assuming that the composition $J \circ \dynw$ (for each $w$) and the cost $\cost$ are jointly convex in the state and input variables, $\dpo$ also preserves convexity \cite[Prop.~3.3.1]{Bertsekas09}. 

For the numerical implementation of VI, we need to address three issues. 
First, we need to compute the expectation in~\eqref{eq:DP op}. 
To simplify the exposition and include the computational cost of this operation explicitly, we consider disturbances with finite support in this study:

\begin{As}[Disturbance with finite support]\label{As:distr}
The disturbance $w$ has a finite support $\setd{W} \subset \R^l$ with a given probability mass function (p.m.f.) $p: \setd{W} \ra [0,1]$. 
\end{As}

Under the preceding assumption, we have $\EE_w J\big(\dynw(x,u,w)\big) = \sum_{w \in \setd{W}} p(w) \cdot J\big(\dynw(x,u,w)\big)$.\footnote{ Indeed, $\setd{W}$ can be considered as a finite approximation of the true support $\setc{W}$ of the disturbance. Moreover, one can consider other approximation schemes, such as Monte Carlo simulation, for this expectation operation.} 
The second and more important issue is that the optimization problem~\eqref{eq:DP op} is infinite-dimensional for the continuous state space $\setc{X}$. 
This renders the exact implementation of VI impossible, except for a few cases with available closed-form solutions. 
A common solution to this problem is to deploy a sample-based approach, accompanied by a function approximation scheme. 
To be precise, for a finite subset $\setd{X}$ of $\setc{X}$, at each iteration $k=0,1,\ldots$, we take the discrete function $\disc{J}_k: \setd{X} \ra \R$ as the input, 
and compute the discrete function $\disc{J}_{k+1} = \disc{\left[\dpo \lerp{\disc{J}_k}\right]}: \setd{X}\ra \R$, 
where $\lerp{\disc{J}_k}: \setc{X} \ra \R$ is an extension of $\disc{J}_k$.\footnote{ The extension can be considered as a generic parametric approximation $\wh{J}_{\theta_k}:\setc{X}\ra\R$, where the parameters $\theta_k$ are computed using regression, i.e., by fitting $\wh{J}_{\theta_k}$ to the data points $\disc{J}_k:\setd{X}\ra\R$.} 
Finally, for each $x \in \setd{X}$, we have to solve the minimization problem in~\eqref{eq:DP op} over the control input. 
Since this minimization problem is often a difficult, non-convex problem, a common approximation again involves enumeration over a discretization $\setd{U} \subset \setc{U}$ of the input space. 

Incorporating these approximations, we end up with the approximate VI algorithm $\disc{J}_{k+1} = \ddpo \disc{J}_k$, characterized by the \emph{discrete} DP (d-DP) operator
\begin{equation} \label{eq:d-DP op}
\ddpo \disc{J}(x) \Let \min_{u \in \setd{U}} \bigg\{ \cost(x,u) + \gamma \cdot \sum_{w \in \setd{W}} p(w) \cdot  \lerp{\disc{J}}\big(\dynw(x,u,w)\big) \bigg\}, \quad \forall x \in \setd{X}.
\end{equation}
The convergence of approximate VI described above depends on the properties of the extension operation $\lerp{[\cdot]}$. 
In particular, if $\lerp{[\cdot]}$ is non-expansive (in the infinity-norm), then $\ddpo$ is also $\gamma$-contractive. 
For example, for a grid-like discretization of the state space $\setd{X} = \setg{X}$, the extension using \emph{interpolative} LERP is non-expansive; see Lemma~\ref{lem:lerp non-exp}. 
The error of this approximation ($\lim \norm{\disc{J}_{k} - \disc{J}\opt}_{\infty}$) also depends on the extension operation $\lerp{[\cdot]}$ and its representative power. 
We refer the interested reader to \cite{Bertsekas07,Busoniu17,Pow11} for detailed discussions on the convergence and error of different approximation schemes for VI.   

The d-DP operator and the corresponding approximate VI algorithm will be our benchmark for evaluating the performance of the alternative algorithm developed in this study. 
To this end, we finish this section with some remarks on the time complexity of the d-DP operation. 
Let the time complexity of a single evaluation of the extension operator $\lerp{[\cdot]}$ in \eqref{eq:d-DP op} be of $\ord (E)$.\footnote{ For example, for the linear approximation $\lerp{\disc{J}}(x) = \sum_{i=1}^{B} \alpha_i \cdot b_i(x)$, we have $E = B$ (the size of the basis), while for the kernel-based approximation $\lerp{\disc{J}} (x)= \sum_{\bar{x} \in \setd{X}}\alpha_{\bar{x}} \cdot r(x,\bar{x})$, we generally have $E \leq X$. 
In particular, if $\setd{X} = \setg{X}$ is grid-like, and $\lerp{\disc{J}} =  \llerp{\disc{J}}$ is approximated using LERP, then $E = \log X$ \cite[Rem.~2.2]{Kolari21dCDParxiv}.} 
Then, the time complexity of the d-DP operation~\eqref{eq:d-DP op} is of $\ord\big(XUWE\big)$. 
In this regard, note that the scheme described above essentially involves approximating a continuous-state/action MDP with a finite-state/action MDP, and then applying VI. 
This, in turn, implies the lower bound $\Omega(XU)$ for the time complexity (corresponding to enumeration over $u \in \setd{U}$ for each $x \in \setd{X}$). 
This lower bound is also compatible with the best existing time complexities in the literature for VI for finite MDPs; see, e.g., \cite{Bach19, Sidford18}. 
However, as we will see in the next section, for a particular class of problems, it is possible to exploit the structure of the underlying continuous system to achieve a better time complexity in the corresponding discretized problem.

\section{Reducing complexity via conjugate duality}
\label{sec:conj DP}

In this section, we present the class of problems that allows us to employ conjugate duality 
and propose an alternative path for solving the corresponding DP operator. 
We also present the numerical scheme for implementing the proposed alternative path and analyze its convergence, complexity, and error. 
We note that the proposed algorithm and its analysis are based on the d-CDP algorithm presented in \cite[Sec.~5]{Kolari21dCDParxiv} for finite-horizon, optimal control of deterministic systems. 
Here, we extend those results for infinite-horizon, discounted cost, optimal control of stochastic systems. 
Moreover, unlike \cite{Kolari21dCDParxiv}, our analysis includes the case where the conjugate of input cost is not analytically available and has to be computed numerically; see \cite[Assump.~5.1]{Kolari21dCDParxiv} for more details. 


\subsection{VI in conjugate domain} \label{subsec:CDP}

Throughout this section, we assume that the problem data satisfy the following conditions. 

\begin{As}[Problem class]\label{As:prob data} 
The problem data has the following properties:
\begin{enumerate}[label=(\roman*)]

\item \label{As:dyn} The \emph{dynamics} is of the form 
$
\dynw(x,u,w) = \dyn (x,u) + w = \dynx (x) + Bu + w 
$, 
with additive disturbance, where $\dynx: \R^n \ra \R^n$ is a Lipschitz continuous, possibly nonlinear map, and $B \in \R^{n\times m}$. 

\item \label{As:cost} The \emph{stage cost} $\cost$ is separable in state and input; 
that is,~$\cost(x,u) = \costx(x)+\costu(u)$, where the state cost $\costx: \setc{X} \ra \R$ and the input cost $\costu: \setc{U} \ra \R$ are Lipschitz continuous. 

\item \label{As:constr} The \emph{constraint} sets $\setc{X} \subset \R^n$ and $\setc{U} \subset \R^m$ are compact. 
Moreover, for each $x \in \setc{X}$, the set of admissible inputs 
$
\setc{U}(x) \Let \{ u \in \setc{U}:  \dynw(x,u,w) \in \setc{X}, \ \forall w \in \setd{W} \}
$ 
is nonempty. 

\end{enumerate}
\end{As}

Some remarks are in order regarding the preceding assumptions. 
We first note that the setting of Assumption~\ref{As:prob data} goes beyond the classical LQR. 
In particular, it includes nonlinear dynamics, state and input constraints, and non-quadratic stage costs.
Second, the properties laid out in Assumption~\ref{As:prob data} imply that the set of admissible inputs $\setc{U}(x)$ is a compact set for each $x \in \setc{X}$. 
This, in turn, implies that the optimal value in~\eqref{eq:DP op} is achieved if $J: \setc{X} \ra \R$ is also assumed to be lower semi-continuous. 
Finally, as we discuss shortly, the two assumptions on the dynamics and the cost play an essential role in the derivation of the alternative algorithm and its computationally efficient implementation.  

 

For the problem class of Assumption~\eqref{As:prob data}, we can use duality theory to present an alternative path for computing the output of the DP operator. 
This path forms the basis for the algorithm proposed in this study. 
To this end, let us fix $x \in \setc{X}$ and consider the following reformulation of the optimization problem~\eqref{eq:DP op}
\begin{align*}
\dpo J(x) = &\costx(x) + \min_{u, z} \left\{ \costu (u) + \gamma \cdot \EE_w J (z+w): z = \dyn(x,u)  \right\},
\end{align*}
where we used additivity of disturbance and separability of stage cost. 
The corresponding dual problem then reads as
\begin{align} \label{eq:CDP op}
&\cdpo J(x) \Let \costx(x) + \max_{y} \ \min_{u,z} \left\{ \costu (u) + \gamma \cdot \EE_w J (z+w) + \inner{y}{\dyn(x,u)-z}  \right\}, 
\end{align}
where $y \in \R^n$ is the dual variable corresponding to the equality constraint. 
For the dynamics of Assumption~\ref{As:prob data}-\ref{As:dyn}, we can then obtain the following representation for the dual problem.

\begin{Prop}[CDP operator]\label{prop:CDP op conj}
The dual problem~\eqref{eq:CDP op} equivalently reads as
\begin{subequations}\label{eq:CDP op conj}
\begin{align}
& \epsilon(x) \Let \gamma \cdot\EE_w J (x+w), &  x \in \setc{X}, \label{eq:CDP op eps}\\
& \phi(y) \Let \lftc{\costu}(-B\tr y) + \lftc{\epsilon}(y), & y \in  \R^n, \label{eq:CDP op phi} \\
& \cdpo J(x) = \costx(x) + \lftc{\phi} \big( \dynx(x) \big), &  x \in \setc{X}, \label{eq:CDP op TV} 
\end{align}
\end{subequations}
where $\lftc{[\cdot]}$ denotes the conjugate operation.
\end{Prop}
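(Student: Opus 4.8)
The plan is to derive the representation~\eqref{eq:CDP op conj} directly from the dual problem~\eqref{eq:CDP op} by substituting the structured dynamics, decoupling the inner minimization into independent problems in $u$ and $z$, and then recognizing each piece as a Legendre--Fenchel conjugate. The two structural hypotheses of Assumption~\ref{As:prob data} — the input-affine dynamics $\dyn(x,u) = \dynx(x) + Bu$ in part~\ref{As:dyn} and the separable stage cost $\cost(x,u) = \costx(x) + \costu(u)$ in part~\ref{As:cost} — are precisely what make this decoupling possible, so the proof is essentially an exercise in bookkeeping built around these two structures.

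First I would substitute $\dyn(x,u) = \dynx(x) + Bu$ into~\eqref{eq:CDP op} and expand the bilinear term as $\inner{y}{\dynx(x) + Bu - z} = \inner{y}{\dynx(x)} + \inner{B\tr y}{u} - \inner{y}{z}$, using the adjoint identity $\inner{y}{Bu} = \inner{B\tr y}{u}$. Since $\inner{y}{\dynx(x)}$ and $\costx(x)$ do not depend on the minimization variables $u$ and $z$, they can be pulled outside the inner $\min$. The residual objective then separates as a sum of a term depending only on $u$ and a term depending only on $z$, so the joint minimization over $(u,z)$ factors into two independent minimizations.

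Next I would match each factor to a conjugate. For the input variable, rewriting the minimum as a negative supremum gives $\min_u \{ \costu(u) + \inner{B\tr y}{u} \} = -\sup_u \{ \inner{-B\tr y}{u} - \costu(u) \} = -\lftc{\costu}(-B\tr y)$. For the state variable, introducing the auxiliary function $\epsilon(x) = \gamma \cdot \EE_w J(x+w)$ from~\eqref{eq:CDP op eps} lets me absorb the discounted expectation, whence $\min_z \{ \epsilon(z) - \inner{y}{z} \} = -\lftc{\epsilon}(y)$. Combining the two, the inner minimization equals $-\phi(y)$ with $\phi$ as defined in~\eqref{eq:CDP op phi}, and the outer maximization becomes $\max_y \{ \inner{y}{\dynx(x)} - \phi(y) \} = \lftc{\phi}\big(\dynx(x)\big)$, which gives~\eqref{eq:CDP op TV} after re-adding $\costx(x)$.

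I do not anticipate a genuine obstacle, since the argument is purely algebraic: the simplification happens entirely \emph{inside} the inner minimization, so no min-max interchange is invoked and questions of strong duality or attainment of optima play no role in establishing this equivalence. The only points that require care are the sign conventions in passing from a minimum to the negative of a supremum (to align with the conjugate $\lftc{h}(y) = \sup_x \{\inner{y}{x} - h(x)\}$), and the consistent routing of the dual variable through the adjoint $B\tr$ so that the input-cost conjugate is correctly evaluated at $-B\tr y$ rather than at $y$.
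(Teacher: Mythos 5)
Your proposal is correct and follows essentially the same route as the paper's proof: substitute the input-affine dynamics into the dual problem, decouple the inner minimization over $u$ and $z$ into two independent problems, recognize each as $-\lftc{\costu}(-B\tr y)$ and $-\lftc{\epsilon}(y)$ respectively, and identify the outer maximization as $\lftc{\phi}\big(\dynx(x)\big)$. Your observation that no min-max interchange is needed here (it only enters later, in the reformulation of Proposition~\ref{prop:CDP reform}) is accurate.
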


Following~\cite{Kolari21dCDParxiv}, we call the operator $\cdpo$ in \eqref{eq:CDP op conj} the \emph{conjugate} DP (CDP) operator. 
We next provide an alternative representation of the CDP operator that captures the essence of this operation.

\begin{Prop}[CDP reformulation] \label{prop:CDP reform}
The CDP operator~$\cdpo$ equivalently reads as
\begin{equation} \label{eq:CDP op alt}
\cdpo J(x) = \costx(x) + \min_{u} \left\{ \bcc{\costu}(u) + \gamma \cdot \bcc{[\EE_w J (\cdot+w)]} \big( \dyn(x,u) \big) \right\},
\end{equation}
where $\bcc{[\cdot]}$ denotes the biconjugate operation.
\end{Prop}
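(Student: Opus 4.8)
The plan is to take the three-step representation \eqref{eq:CDP op conj} of Proposition~\ref{prop:CDP op conj} as given and recast it into the biconjugate form \eqref{eq:CDP op alt} through standard conjugate calculus. Since the term $\costx(x)$ and the discount factor $\gamma$ are inert, the statement reduces to the single convex-analytic identity
\[
\lftc{\phi}\big(\dynx(x)\big) = \min_u \big\{ \bcc{\costu}(u) + \bcc{\epsilon}\big( \dynx(x) + Bu \big) \big\},
\]
where $\phi$ and $\epsilon$ are as in \eqref{eq:CDP op phi} and \eqref{eq:CDP op eps}, and where I have used $\dyn(x,u) = \dynx(x) + Bu$ from Assumption~\ref{As:prob data}-\ref{As:dyn}.

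To establish this identity, I would first write $\phi = \phi_1 + \phi_2$ with $\phi_1(y) = \lftc{\costu}(-B\tr y)$ and $\phi_2(y) = \lftc{\epsilon}(y)$, both convex by construction as (compositions of) conjugates. Conjugating the sum turns it into an infimal convolution,
\[
\lftc{\phi}(v) = \inf_{v_1 + v_2 = v} \big\{ \lftc{\phi_1}(v_1) + \lftc{\phi_2}(v_2) \big\},
\]
and since $\phi_2 = \lftc{\epsilon}$ one has $\lftc{\phi_2} = \bcc{\epsilon}$ immediately. For the first term, $\phi_1$ is the composition of the convex function $\lftc{\costu}$ with the linear map $y \mapsto -B\tr y$, so the conjugate-of-composition rule yields $\lftc{\phi_1}(v_1) = \inf\{\bcc{\costu}(u) : Bu = -v_1\}$. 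Substituting this into the infimal convolution and eliminating the constraint variable (i.e., setting $v_1 = -Bu$, hence $v_2 = v + Bu$) collapses the two nested infima into a single minimization over $u$, which evaluated at $v = \dynx(x)$ is precisely the right-hand side above.

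Two auxiliary points complete the argument. The discount factor is extracted via positive homogeneity of the biconjugate, $\bcc{(\gamma g)} = \gamma\,\bcc{g}$ for $\gamma > 0$, applied to $\epsilon = \gamma\cdot\EE_w J(\cdot + w)$ so as to match the factor $\gamma\cdot\bcc{[\EE_w J(\cdot+w)]}$ appearing in \eqref{eq:CDP op alt}. Attainment of the minimum (rather than a mere infimum) follows from compactness: under Assumption~\ref{As:prob data}-\ref{As:constr} the objective is lower semicontinuous, the admissible set is nonempty, and the relevant effective domains are compact.

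The main obstacle I anticipate is justifying the two conjugate identities as equalities with attainment, since both the conjugate-of-sum and the conjugate-of-composition formulas require a constraint qualification (a relative-interior/feasibility condition) to rule out a duality gap. The observation that dissolves this difficulty is the compactness embedded in Assumption~\ref{As:prob data}: because $\dom(\costu) = \setc{U}$ and $\dom(\epsilon)$ are bounded, the conjugates $\lftc{\costu}$ and $\lftc{\epsilon}$ are finite on all of $\R^m$ and $\R^n$, respectively. Full-domain conjugates make the relative-interior conditions trivially satisfied, so both calculus rules hold with equality and with the infima attained, and the nested infima can be merged without loss.
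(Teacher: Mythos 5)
Your proof is correct, but it reaches \eqref{eq:CDP op alt} by a genuinely different route than the paper. The paper's proof stays in the max--min form: it replaces $\lftc{\costu}(-B\tr y)$ by $\bcc{[\lftc{\costu}]}(-B\tr y)=\max_{u\in\co(\setc{U})}\{\inner{-B\tr y}{u}-\bcc{\costu}(u)\}$, obtains a max--min over $(y,u)$ that is concave in $y$ and convex in $u$ on the compact set $\co(\setc{U})$, invokes Sion's minimax theorem to swap the order, and then recognizes the inner maximization over $y$ as $\bcc{\epsilon}(\dyn(x,u))$. You instead compute $\lftc{\phi}$ directly by Fenchel calculus: the sum rule turns $\lftc{(\phi_1+\phi_2)}$ into the infimal convolution $\lftc{\phi_1}\,\square\,\lftc{\phi_2}$, and the conjugate-of-a-linear-precomposition rule gives $\lftc{\phi_1}(v_1)=\inf\{\bcc{\costu}(u):Bu=-v_1\}$, after which eliminating $v_1$ collapses everything to a single minimization over $u$. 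The two arguments are of comparable depth (the exact sum rule is itself usually proved via a minimax/duality argument), but they hinge on different hypotheses being checked: the paper must verify the convexity/concavity and compactness needed for Sion, whereas you must verify the constraint qualifications for the two calculus rules --- and your observation that compactness of $\setc{U}$ and of $\dom(\epsilon)$ forces $\lftc{\costu}$ and $\lftc{\epsilon}$ to be finite on all of $\R^m$ and $\R^n$, so that both qualifications hold trivially and the infima are attained, is exactly the right one. A side benefit of your route is that it yields the identity for $\lftc{\phi}(v)$ at every $v$, not just at $v=\dynx(x)$, and delivers attainment of the minimum as part of the exactness of the calculus rules rather than as a separate compactness argument; the paper's route is more self-contained in that it cites only Sion's theorem and the biconjugation identity for the closed convex function $\lftc{\costu}$. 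Both proofs conclude identically by pulling out the factor $\gamma$ via $\bcc{[\gamma h]}=\gamma\cdot\bcc{h}$.
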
 

The preceding result implies that the indirect path through the conjugate domain essentially involves substituting the input cost and (expectation of the) value function by their biconjugates. 
In particular, it points to a sufficient condition for zero duality gap. 

\begin{Cor}[Equivalence of $\dpo$ and $\cdpo$] \label{cor:equivalnece CDP and DP}
If $\costu : \setc{U} \ra \R$ and $J : \setc{X} \ra \R$ are convex, then $\cdpo J = \dpo J$.
\end{Cor}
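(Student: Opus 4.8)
The plan is to start from the reformulation in Proposition~\ref{prop:CDP reform} and to discard the two biconjugates by invoking the Fenchel--Moreau theorem, after which the expression collapses onto $\dpo J$. Recall that Proposition~\ref{prop:CDP reform} gives
\begin{equation*}
\cdpo J(x) = \costx(x) + \min_{u} \left\{ \bcc{\costu}(u) + \gamma \cdot \bcc{[\EE_w J (\cdot+w)]} \big( \dyn(x,u) \big) \right\},
\end{equation*}
whereas substituting $\dynw(x,u,w) = \dyn(x,u)+w$ and $\cost(x,u) = \costx(x)+\costu(u)$ from Assumption~\ref{As:prob data}-\ref{As:dyn} and \ref{As:prob data}-\ref{As:cost} into \eqref{eq:DP op} yields
\begin{equation*}
\dpo J(x) = \costx(x) + \min_{u} \left\{ \costu(u) + \gamma \cdot \EE_w J\big(\dyn(x,u)+w\big) \right\}.
\end{equation*}
Comparing the two displays, it suffices to show that the two bracketed objectives coincide as functions of $u$, i.e. that $\bcc{\costu}=\costu$ on $\R^m$ and $\bcc{g}=g$ on $\R^n$, where $g \Let \EE_w J(\cdot + w)$ and both functions are understood as extended by $+\infty$ outside their effective domains.

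First I would verify that $g$ is convex. Since the disturbance has finite support with $p(w)\ge 0$ and translation preserves convexity, $g(z)=\sum_{w\in\setd{W}} p(w)\, J(z+w)$ is a nonnegative combination of convex functions and is therefore convex; its effective domain $\bigcap_{w\in\setd{W}}(\setc{X}-w)$ is a finite intersection of translates of the compact convex set $\setc{X}$, hence again compact and convex. Next I would invoke the Fenchel--Moreau theorem, which asserts $\bcc{h}=h$ for any proper, closed, convex $h$. For $\costu$ this applies by hypothesis together with Assumption~\ref{As:prob data}-\ref{As:cost} and \ref{As:prob data}-\ref{As:constr}: $\costu$ is Lipschitz continuous on the compact set $\setc{U}$, hence proper and closed once extended by $+\infty$. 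For $g$ it applies by the convexity just established together with the regularity inherited from $J$ on the compact convex domain above.

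With $\bcc{\costu}(u)=\costu(u)$ for every $u\in\R^m$ and $\bcc{g}(z)=g(z)$ for every $z\in\R^n$ (these equalities holding also where the functions equal $+\infty$), substituting $z=\dyn(x,u)$ makes the two bracketed objectives identical pointwise in $u$. Taking the minimum over $u$ then gives $\cdpo J(x)=\dpo J(x)$ for every $x\in\setc{X}$, which is the claim.

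The only delicate point---and where I expect the technical care to concentrate---is the closedness (lower semicontinuity) required by Fenchel--Moreau: a merely convex, finite-valued function on a compact convex set need not be lower semicontinuous at the boundary, in which case one would only obtain $\bcc{h}=\cl\co\, h \le h$ rather than exact equality. The substantive work is therefore to confirm that lower semicontinuity is available, which is immediate for $\costu$ from its Lipschitz continuity and, for $g$, follows from the corresponding regularity of $J$ under the standing assumptions (cf.\ the lower-semicontinuity hypothesis already imposed on $J$ in Section~\ref{sec:conj DP}). Once closedness is secured, the algebraic collapse from the biconjugate form to $\dpo J$ is routine.
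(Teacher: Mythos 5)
Your proposal is correct and follows essentially the same route as the paper: both start from Proposition~\ref{prop:CDP reform} and remove the two biconjugates via Fenchel--Moreau, using convexity of $\costu$ and of $\EE_w J(\cdot+w)$ together with the compactness of $\setc{U}$ and $\setc{X}$. Your extra care about lower semicontinuity at the boundary (which the paper's one-line justification glosses over, relying implicitly on the Lipschitz continuity of $\costu$ and the lower semicontinuity assumed of $J$) is a welcome refinement but not a different argument.
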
 

Hence, $\cdpo$ has the same properties as $\dpo$ if $\costu$ and $J$ are convex. 
More importantly, if $\dpo$ and $\cdpo$ preserve convexity, then the \emph{conjugate} VI (ConjVI) algorithm $J_{k+1} = \cdpo J_k$ also converges to the optimal value function $J\opt$, with arbitrary convex initialization $J_0$. 
For convexity to be preserved, however, we need more assumptions: 
First, the state cost $\costx : \setc{X} \ra \R$ needs to be also convex. 
Then, for $\cdpo J$ to be convex, a sufficient condition is convexity of $J \circ \dyn$ (jointly in $x$ and $u$), given that $J$ is convex. 
The following assumption summarizes the sufficient conditions for equivalence of VI and ConjVI algorithms.

\begin{As}[Convexity]\label{As:convex} 
Consider the following properties for the constraints, costs, and dynamics:
\begin{enumerate}[label=(\roman*)]

\item  The sets $\setc{X} \subset \R^n$ and $\setc{U} \subset \R^m$ are convex.

\item  The costs $\costx: \setc{X} \ra \R$ and $\costu: \setc{U} \ra \R$ are convex.

\item The deterministic dynamics $\dyn:\R^n\times\R^m\ra\R^n$ is such that given a convex function~$J:\setc{X} \ra R$, the composition~$J \circ \dyn$ is jointly convex in the state and input variables. 

\end{enumerate}
\end{As}

We note that the last condition in the preceding assumption usually does not hold for nonlinear dynamics, however, for $\dynx (x) = Ax$ with $A \in \R^{n\times n}$, this is indeed the case for problems satisfying Assumptions~\ref{As:prob data} and \ref{As:convex} ~\cite{Bertsekas73}.
Note that, if convexity is not preserved, then the alternative path suffers from duality gap in the sense that in each iteration it uses the \emph{convex envelope} of (the expectation of) the output of the previous iteration.

\subsection{ConjVI algorithm} \label{subsec:alg} 
The approximate ConjVI algorithm involves consecutive applications of an approximate implementation of the CDP operator~\eqref{eq:CDP op conj} until some termination condition is satisfied. 
Algorithm~\ref{alg:d-CDP separ cost} provides the pseudo-code of this procedure. 
In particular, we consider solving~\eqref{eq:CDP op conj} for a finite set $\setd{X} \subset \setc{X}$, and terminate the iterations when the difference between two consecutive discrete value functions (in the infinity-norm) is less than a given constant $e_\mathrm{t} >0$; see Algorithm~\ref{alg:d-CDP separ cost}:\ref{line_alg2:terminate}. 
Since we are working with a finite subset of the state space, we can restrict the feasibility condition of Assumption~\ref{As:prob data}-\ref{As:constr} to all $x\in \setd{X}$ (as opposed to all $x\in \setc{X}$):

\begin{As}[Feasibile discretization]\label{As:feasible discrete}
The set of admissible inputs $\setc{U}(x)$ is nonempty for all $x \in \setd{X}$.
\end{As} 

\begin{algorithm}[t]
   \caption{ConjVI: Approximate VI in conjugate domain}
   \label{alg:d-CDP separ cost}
\begin{algorithmic}[1]
\begin{small}

	\REQUIRE dynamics $\dynx: \R^n \ra \R^n, \ B \in \R^{n\times m}$; 
	finite state space~$\setd{X} \subset \setc{X}$;
	finite input space~$\setd{U} \subset \setc{U}$; 
	state cost function~$\disc{\costx}: \setd{X} \ra \R$;
	input cost function~$\disc{\costu}: \setd{U} \ra \R$;
	finite disturbance space~$\setd{W}$ and its p.m.f.~$p: \setd{W} \ra [0,1]$;
	discount factor~$\gamma$; 
	termination bound~$e_\mathrm{t}$.
	\ENSURE discrete value function $\disc{\wh{J}}: \setd{X} \ra \R$.
  	
  	
  	\textit{initialization:}
  	\STATE construct the grid $\setg{V}$; \label{line_alg2:const V}
	\STATE use LLT to compute $\lftdd{\costu}: \setg{V} \ra \R$ from $\disc{\costu}: \setd{U} \ra \R$; \label{line_alg2:LLT of Ci}
	\STATE construct the grid $\setg{Z}$; \label{line_alg2:const Z}
	\STATE construct the grid $\setg{Y}$; \label{line_alg2:const Y}  	
  	\STATE $\disc{J}(x) \gets 0$ for $x \in \setd{X}$; \label{line_alg2:init 1}
  	\STATE $\disc{J}_+(x) \gets \disc{\costx}(x) - \min \disc{\costu}$ for $x \in \setd{X}$; \label{line_alg2:init 2}

  	\textit{iteration:}
  	
  	\WHILE{$\norm{\disc{J}_{+} - \disc{J}}_{\infty} \ge e_\mathrm{t}$} \label{line_alg2:terminate}
  	
  		\STATE $ \disc{J} \gets \disc{J}_{+} $; \label{line_alg2:iter 1}
  	
  	\textit{d-CDP operation:}
  		\STATE $\disc{\varepsilon}(x) \gets \gamma \cdot \sum_{w \in \setd{W}} p(w) \cdot \lerp{\disc{J}} (x+w)$ for $x \in \setd{X}$; \label{line_alg2:LERP of V}
    	\STATE use LLT to compute $\lftdd{\varepsilon}: \setg{Y} \ra \Ru$ from $\disc{\varepsilon}: \setd{X} \ra \R$; \label{line_alg2:LLT of V}
    
	    \FOR{each $y \in \setg{Y}$}
    
 			\STATE  use LERP to compute $\llerp{\lftdd{\costu}}(-B^{\top} y)$ from $\lftdd{\costu}: \setg{V} \ra \R$; \label{line_alg2:LERP of Ci}
     	
    		\STATE $\disc{\varphi}(y) \gets \llerp{\lftdd{\costu}}(-B^{\top} y) + \lftdd{\varepsilon}(y)$; \label{line_alg2:phi}
    	
   		\ENDFOR

    
    	\STATE use LLT to compute $\lftdd{\varphi}: \setg{Z} \ra \R$ from $\disc{\varphi}: \setg{Y} \ra \R $; \label{line_alg2:LLT of phi}
    
    
    	\FOR{each $x \in \setd{X}$}
    
 			\STATE  use LERP to compute $\llerp{\lftdd{\varphi}}\big( \dynx(x) \big)$ from $\lftdd{\varphi}: \setg{Z} \ra \R$; \label{line_alg2:LERP of phi}
     	
    		\STATE $\disc{J}_{+}(x) \gets \costx (x) + \llerp{\lftdd{\varphi}}\big( \dynx(x) \big)$;
    	
   		\ENDFOR
   		
   	\ENDWHILE
   	
   	\STATE output $ \disc{\wh{J}} \gets \disc{J}_+ $.
 
\end{small}  	
  	
\end{algorithmic}
\end{algorithm}

In what follows, we describe the main steps within the initialization and iterations of Algorithm~\ref{alg:d-CDP separ cost}. 
In particular, the conjugate operations in~\eqref{eq:CDP op conj} are handled numerically via the linear-time Legendre transform (LLT) algorithm~\cite{Lucet97}. 
LLT is an efficient algorithm for computing the \emph{discrete} conjugate function over a finite \emph{grid-like} dual domain. 
Precisely, to compute the conjugate of the function~$h: \setc{X} \ra \R$, LLT takes its discretization $\disc{h}: \setd{X} \ra \R$ as an input, and outputs $\lftdd{h}: \setg{Y} \ra \R$, for the grid-like dual domain $\setg{Y}$. 
We refer the reader to \cite{Lucet97} for a detailed description of LLT. 
The main steps of the proposed approximate implementation of the CDP operator~\eqref{eq:CDP op conj} are as follows:

\begin{itemize}
\item[(i)] For the expectation operation in~\eqref{eq:CDP op eps}, by Assumption~\ref{As:distr}, we again have $$\EE_w J (\cdot+w) = \sum_{w \in \setd{W}} p(w) \cdot J (\cdot+w).$$ 
Hence, we need to pass the value function $\disc{J}: \setd{X} \ra \R$ through the ``scaled expection filter'' to obtain $\disc{\varepsilon}: \setd{X} \ra \Ru$ in \eqref{eq:d-CDP op eps} as an approximation of $\epsilon$ in \eqref{eq:CDP op eps}. 
Notice that here we are using an extension $\lerp{\disc{J}}:\setc{X} \ra \R$ of $\disc{J}$ (recall that we only have access to the discrete value function $\disc{J}$). 
\item[(ii)] To compute $\phi$ in~\eqref{eq:CDP op phi}, we need access to two conjugate functions: 
\begin{itemize}
\item[(a)] For $\lftc{\epsilon}$, we use the approximation $\lftdd{\varepsilon}: \setg{Y}\ra \R$ in \eqref{eq:d-CDP op eps conj}, by applying LLT to the data points $\disc{\varepsilon}: \setd{X} \ra \Ru$ for a properly chosen state dual grid $\setg{Y} \subset \R^n$. 
\item[(b)] If the conjugate $\lftc{\costu}$ of the input cost is not analytically available, we approximate it as follows: For a properly chosen input dual grid $\setg{V} \subset \R^m$, we employ LLT to compute $\lftdd{\costu}: \setg{V} \ra \R$ in~\eqref{eq:d-CDP op Ci conj}, 
using the data points $\disc{\costu}: \setd{U} \ra \R$, where $\setd{U}$ is a finite subset of $\setc{U}$. 
\end{itemize}
With these conjugate functions at hand, we can now compute $\disc{\varphi}: \setg{Y} \ra \R$ in \eqref{eq:d-CDP op phi}, as an approximation of $\phi$ in \eqref{eq:CDP op phi}. 
In particular, notice that we use the LERP extension $\llerp{\lftdd{\costu}}$ of $\lftdd{\costu}$ to approximate $\lftd{\costu}$ at the required point $(-B\tr y)$ for each $y \in \setg{Y}$.
\item[(iii)] To be able to compute the output according to~\eqref{eq:CDP op TV}, we need to perform another conjugate transform. 
In particular, we need the value of $\lftc{\phi}$ at $\dynx(x)$ for $x \in \setd{X}$. 
Here, we use the approximation $\lftdd{\varphi}: \setg{Z} \ra \R$ in \eqref{eq:d-CDP op phi conj}, by applying LLT to the data points $\disc{\varphi}: \setg{Y} \ra \R$ for a properly chosen grid $\setg{Z} \subset \R^n$. Finally, we use the LERP extension $\llerp{\lftdd{\varphi}}$ of $\lftdd{\varphi}$ to approximate $\lftd{\varphi}$ at the required point $\dynx(x)$ for each $x \in \setd{X}$, and compute $\dcdpo \disc{J}$ in \eqref{eq:d-CDP op TV} as an approximation of $\cdpo J$ in \eqref{eq:CDP op TV}. 
\end{itemize}
With these approximations, we can introduce the \emph{discrete} CDP (d-CDP) operator as follows 
\begin{subequations} \label{eq:d-CDP op}
\begin{align}
& \disc{\varepsilon}(x) \Let \gamma \cdot \sum_{w \in \setd{W}} p(w) \cdot \lerp{\disc{J}} (x+w), & x \in \setd{X}, \label{eq:d-CDP op eps}  \\
&\lftdd{\varepsilon}(y) = \max_{x \in \setd{X}} \left\{ \inner{x}{y} - \disc{\varepsilon}(x) \right\},& y \in \setg{Y}, \label{eq:d-CDP op eps conj} \\
&\lftdd{\costu}(v) = \max_{u \in \setd{U}} \left\{ \inner{u}{v} - \disc{\costu}(u) \right\},& v \in \setg{V}, \label{eq:d-CDP op Ci conj} \\
& \disc{\varphi}(y) \Let \llerp{\lftdd{\costu}}(-B^{\top} y) + \lftdd{\varepsilon}(y), & y \in \setg{Y}, \label{eq:d-CDP op phi} \\
&\lftdd{\varphi}(z) = \max_{y \in \setg{Y}} \left\{ \inner{y}{z} - \disc{\varphi}(y) \right\},& z \in \setg{Z}, \label{eq:d-CDP op phi conj} \\
& \dcdpo \disc{J}(x) \Let \costx(x) + \llerp{\lftdd{\varphi}} \big( \dynx(x) \big), &  x \in \setd{X}. \label{eq:d-CDP op TV}
\end{align}
\end{subequations} 

The proper construction of the grids $\setg{Y}$, $\setg{V}$, and $\setg{Z}$ will be discussed in Section~\ref{subsec:grid construction}. 
We finish this subsection with the following remarks on the modification of the proposed algorithm for two special cases.

\begin{Rem}[Deterministic systems] For deterministic systems, i.e., $\dynw(x,u,w) = \dyn (x,u)$, we do not need to compute any expectation. 
Then, the operation in~\eqref{eq:d-CDP op eps} becomes the simple scaling $\disc{\varepsilon} = \gamma \cdot \disc{J}$.

\end{Rem}

\begin{Rem}[Analytically available $\lftc{\costu}$]
If the conjugate $\lftc{\costu}$ of the input cost is analytically available, we can use it directly in~\eqref{eq:d-CDP op phi} instead of $\llerp{\lftdd{\costu}}$ and avoid the corresponding approximation; i.e., there is no need for construction of $\setg{V}$ and the computation of $\lftdd{\costu}$ in~\eqref{eq:d-CDP op Ci conj}.
\end{Rem}

\subsection{Analysis of ConjVI algorithm} \label{subsec:analysis} 

We now  provide our main theoretical results concerning the convergence, complexity, and error of the proposed algorithm. 
Let us begin by presenting the assumptions to be called in this subsection.

\begin{As}[Grids]\label{As:grids} 

Consider the following properties for the grids in Algorithm~\ref{alg:d-CDP separ cost} (consult the Notations in Section~\ref{sec:intro}):

\begin{enumerate}[label=(\roman*)]

\item \label{As:grid V} The grid $\setg{V}$ is constructed such that $\co (\setsg{V}) \supseteq \setc{L} (\disc{\costu})$. 
\item \label{As:grid Z} The grid $\setg{Z}$ is constructed such that $\co (\setg{Z}) \supseteq \dynx\big(\setd{X}\big)$. 

\item \label{As:grid complexity} The construction of $\setg{Y}$, $\setg{V}$, and $\setg{Z}$ requires at most $\ord(X+U)$ operations. 
The cardinality of the grids $\setg{Y}$ and $\setg{Z}$ (resp. $\setg{V}$) in each dimension is the same as that of $\setd{X}$ (resp. $\setd{U}$) in that dimension so that $Y,Z = X$ and $V=U$.  

\end{enumerate}
\end{As}

\begin{As}[Extension operator]\label{As:extension op}
Consider the following properties for the operator~$\lerp{[\cdot]}$ in~\eqref{eq:d-CDP op eps}:

\begin{enumerate}[label=(\roman*)]

\item \label{As:ext nonexp} The extension operator is non-expansive w.r.t. the infinity norm;  
that is, for two discrete functions $\disc{J}_{i}:\setd{X} \ra \R \ (i=1,2)$ and their extensions $\lerp{\disc{J}_{i}}: \setc{X} \ra \R$, we have
$
\Vert\lerp{\disc{J}_{1}}-\lerp{\disc{J}_{2}}\Vert_{\infty} \leq \Vert\disc{J}_1-\disc{J}_2\Vert_{\infty}
$.

\item \label{As:ext error} Given a function $J:\setc{X} \ra \R$ and its discretization $\disc{J}:\setd{X} \ra \R$, the error of the extension operator is uniformly bounded, that is, $\Vert J - \lerp{\disc{J}}\Vert_{\infty} \leq e_\mathrm{e}$ for some constant $e_\mathrm{e} \geq 0$. 

\end{enumerate}
\end{As}


Our first result concerns the contractiveness of the d-CDP operator. 

\begin{Thm} [Convergence] \label{thm:convergence}
Let Assumptions~\ref{As:grids}-\ref{As:grid Z} and \ref{As:extension op}-\ref{As:ext nonexp} hold. 
Then, the d-CDP operator~\eqref{eq:d-CDP op} is $\gamma$-contractive w.r.t. the infinity-norm.
\end{Thm}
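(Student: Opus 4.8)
The plan is to fix two discrete value functions $\disc{J}_1,\disc{J}_2:\setd{X}\ra\R$, set $\delta \Let \norm{\disc{J}_1-\disc{J}_2}_{\infty}$, and propagate this bound through the six sub-operations of the d-CDP operator~\eqref{eq:d-CDP op}, decorating each intermediate quantity ($\disc{\varepsilon}$, $\lftdd{\varepsilon}$, $\disc{\varphi}$, $\lftdd{\varphi}$) with a subscript $i\in\{1,2\}$ to indicate the input from which it is derived; the goal is $\norm{\dcdpo\disc{J}_1-\dcdpo\disc{J}_2}_{\infty}\le\gamma\delta$. The guiding observation is that two of the six steps do not depend on $\disc{J}$ at all, namely the input-cost conjugate $\lftdd{\costu}$ in~\eqref{eq:d-CDP op Ci conj} and the state cost $\costx$ in~\eqref{eq:d-CDP op TV}; these cancel in every difference, so the only question is how the remaining operations act on the discrepancy. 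Accordingly, Assumption~\ref{As:grids}-\ref{As:grid V} and Assumption~\ref{As:extension op}-\ref{As:ext error} play no role here, consistent with the hypotheses invoked in the statement.

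For the first step~\eqref{eq:d-CDP op eps}, I would combine Assumption~\ref{As:extension op}-\ref{As:ext nonexp} (non-expansiveness of $\lerp{[\cdot]}$) with $\sum_{w\in\setd{W}}p(w)=1$ and the factor $\gamma$ to obtain, uniformly in $x\in\setd{X}$,
\[
\bigl|\disc{\varepsilon}_1(x)-\disc{\varepsilon}_2(x)\bigr| \le \gamma \sum_{w\in\setd{W}} p(w)\,\bigl\Vert \lerp{\disc{J}_1}-\lerp{\disc{J}_2}\bigr\Vert_{\infty} \le \gamma\delta ,
\]
so that $\norm{\disc{\varepsilon}_1-\disc{\varepsilon}_2}_{\infty}\le\gamma\delta$. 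Next I would record the elementary but central fact that discrete conjugation $h\mapsto\lftd{h}$ is non-expansive in the infinity-norm: if $|f-g|\le c$ pointwise on a finite set, then $\inner{x}{y}-f(x)\le\inner{x}{y}-g(x)+c$ for each $x$, and maximizing over the finite domain yields $|\lftd{f}(y)-\lftd{g}(y)|\le c$ for all $y$. Applying this to~\eqref{eq:d-CDP op eps conj} gives $\norm{\lftdd{\varepsilon}_1-\lftdd{\varepsilon}_2}_{\infty}\le\gamma\delta$ on $\setg{Y}$.

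In the $\varphi$-step~\eqref{eq:d-CDP op phi}, the term $\llerp{\lftdd{\costu}}(-B^\top y)$ is common to both inputs, hence $\disc{\varphi}_1-\disc{\varphi}_2=\lftdd{\varepsilon}_1-\lftdd{\varepsilon}_2$ and the bound $\gamma\delta$ survives unchanged; a second application of the conjugation non-expansiveness to~\eqref{eq:d-CDP op phi conj} then yields $\norm{\lftdd{\varphi}_1-\lftdd{\varphi}_2}_{\infty}\le\gamma\delta$ on $\setg{Z}$. The final step~\eqref{eq:d-CDP op TV} is where the one genuinely delicate point lies and where Assumption~\ref{As:grids}-\ref{As:grid Z} enters. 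After cancelling the common $\costx(x)$, the difference equals $\llerp{\lftdd{\varphi}_1}\big(\dynx(x)\big)-\llerp{\lftdd{\varphi}_2}\big(\dynx(x)\big)$, an evaluation of the LERP extension at the point $\dynx(x)$. Because $\co(\setg{Z})\supseteq\dynx(\setd{X})$, this point lies inside the grid's convex hull, so LERP acts by \emph{interpolation} rather than extrapolation; the interpolated value is a convex combination of grid values with weights depending only on $\dynx(x)$ and $\setg{Z}$, not on $\lftdd{\varphi}_i$, so the extension is non-expansive (this is exactly Lemma~\ref{lem:lerp non-exp}). Consequently $|\dcdpo\disc{J}_1(x)-\dcdpo\disc{J}_2(x)|\le\norm{\lftdd{\varphi}_1-\lftdd{\varphi}_2}_{\infty}\le\gamma\delta$ for every $x\in\setd{X}$, which is the claim. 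I expect the main obstacle to be precisely this interpolation-versus-extrapolation distinction: extrapolative LERP is in general expansive, so the chain would break without Assumption~\ref{As:grids}-\ref{As:grid Z} guaranteeing that every evaluation point $\dynx(x)$ stays within $\co(\setg{Z})$. Everything else is a bookkeeping composition of non-expansive maps, scaled exactly once by $\gamma$ at the expectation step.
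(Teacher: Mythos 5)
Your proposal is correct and follows essentially the same route as the paper's proof: the same two key facts (non-expansiveness of discrete conjugation and of interpolative LERP, the latter enabled by Assumption~\ref{As:grids}-\ref{As:grid Z} ensuring $\dynx(x)\in\co(\setg{Z})$), the same cancellation of the $\disc{J}$-independent terms, and the single factor $\gamma$ extracted at the expectation step via Assumption~\ref{As:extension op}-\ref{As:ext nonexp}. The only difference is presentational (you propagate the bound forward while the paper unwinds it backward), plus the paper additionally records the minor technical point that the conjugates $\lftd{\varepsilon}_i$ and $\lftd{\varphi}_i$ all have full domain $\R^n$ so the conjugation lemma applies.
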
 

The preceding theorem implies that the approximate ConjVI Algorithm~\ref{alg:d-CDP separ cost} is indeed convergent given that the required conditions are satisfied. 
In particular, for deterministic dynamics, $\co (\setg{Z}) \supseteq \dynx\big(\setd{X}\big)$ is sufficient for Algorithm~\ref{alg:d-CDP separ cost} to be convergent. 
We next consider the time complexity of our algorithm. 

\begin{Thm} [Complexity] \label{thm:complexity}
Let Assumption~\ref{As:grids}-\ref{As:grid complexity} hold. 
Also assume that each evaluation of the extension operator~$\lerp{[\cdot]}$ in~\eqref{eq:d-CDP op eps} requires $\ord (E)$ operations. 
Then, the time complexities of initialization and each iteration in Algorithm~\ref{alg:d-CDP separ cost} are of $\ord (X+U)$ and $ \wt{\ord}(XWE)$, respectively.
\end{Thm}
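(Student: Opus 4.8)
The plan is to prove both bounds by a line-by-line accounting of Algorithm~\ref{alg:d-CDP separ cost}, resting on two elementary facts about the numerical primitives it invokes. First, by~\cite{Lucet97}, the LLT computes a discrete conjugate over a grid-like dual domain in time \emph{linear} in the combined cardinality of its input and output grids, up to logarithmic factors that are absorbed into $\wt{\ord}$. Second, a single LERP evaluation of a function tabulated on a grid of cardinality $N$ costs $\ord(\log N)$, since locating the enclosing cell is a binary search along each coordinate (this is exactly the $E = \log X$ estimate recorded earlier for the LERP extension, cf.~\cite{Kolari21dCDParxiv}). I would feed these into the cardinality identities $Y, Z = X$ and $V = U$ and the $\ord(X+U)$ construction budget granted by Assumption~\ref{As:grids}-\ref{As:grid complexity}, together with the standing convention that each evaluation of $\costx$, $\costu$, $\dynx$, the product $-B\tr y$, and the p.m.f.~$p$ is of $\ord(1)$.

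For the initialization I would bound lines~\ref{line_alg2:const V}--\ref{line_alg2:init 2} individually. The grid constructions on lines~\ref{line_alg2:const V}, \ref{line_alg2:const Z}, and~\ref{line_alg2:const Y} cost $\ord(X+U)$ in total by Assumption~\ref{As:grids}-\ref{As:grid complexity}. The conjugate $\lftdd{\costu}$ on line~\ref{line_alg2:LLT of Ci} is a single LLT with input of size $U$ and output of size $V = U$, hence $\wt{\ord}(U)$. Initializing $\disc{J}$ on line~\ref{line_alg2:init 1} is $\ord(X)$, while line~\ref{line_alg2:init 2} needs one pass of $\ord(U)$ to evaluate $\min \disc{\costu}$ followed by $\ord(X)$ assignments. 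Summing yields $\ord(X+U)$; the decisive structural observation is that the conjugate of the input cost is computed here \emph{once} and reused, so no per-iteration cost scales with $U$.

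For a single iteration I would traverse the d-CDP operation~\eqref{eq:d-CDP op} step by step. The copy on line~\ref{line_alg2:iter 1} is $\ord(X)$. The scaled-expectation step on line~\ref{line_alg2:LERP of V} evaluates, for each of the $X$ states and each of the $W$ disturbances, one extension at cost $\ord(E)$, giving $\ord(XWE)$; this is the dominant term, and it carries \emph{no} factor of $U$ because the minimization over inputs has been displaced into the conjugate domain. The two LLTs on lines~\ref{line_alg2:LLT of V} and~\ref{line_alg2:LLT of phi} act on grids of size $X$ and return grids of size $Y = X$ and $Z = X$, hence $\wt{\ord}(X)$ each. The loop on lines~\ref{line_alg2:LERP of Ci}--\ref{line_alg2:phi} performs $Y = X$ LERP evaluations on $\setg{V}$ of size $V = U$, hence $\ord(X \log U) = \wt{\ord}(X)$, and the concluding loop over $x \in \setd{X}$ (line~\ref{line_alg2:LERP of phi}) performs $X$ LERP evaluations on $\setg{Z}$ of size $Z = X$, hence $\ord(X \log X) = \wt{\ord}(X)$. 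Since every step except line~\ref{line_alg2:LERP of V} is $\wt{\ord}(X)$ and $W, E \geq 1$, the total collapses to $\wt{\ord}(XWE)$.

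The one place where the argument exceeds bookkeeping is the linear-time guarantee for the LLTs on lines~\ref{line_alg2:LLT of V} and~\ref{line_alg2:LLT of phi}: it is precisely this that replaces the $\ord(XU)$ minimization of the d-DP operator~\eqref{eq:d-DP op} by the additive update~\eqref{eq:d-CDP op phi}, thereby breaking the $XU$ product that a brute-force discrete conjugate ($\max_{x}$ taken for each $y$) would incur. I would therefore verify that the input grids meet the structural hypotheses under which~\cite{Lucet97} delivers linear complexity, and confirm that all logarithmic overheads — the cell searches inside the LERP steps and any slope-sorting inside the LLT — are genuinely of the form hidden by $\wt{\ord}$, so that no dimension-dependent or $U$-dependent factor is smuggled into the final per-iteration bound.
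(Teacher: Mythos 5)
Your proposal is correct and follows essentially the same line-by-line accounting as the paper's own proof: the same $\ord(X+U)$ budget for the grid constructions and one-time conjugation of $\costu$, the same identification of line~\ref{line_alg2:LERP of V} as the dominant $\ord(XWE)$ step, and the same use of $Y,Z=X$, $V=U$ with $\ord(\log N)$ per LERP evaluation to collapse the remaining steps to $\wt{\ord}(X)$. The only cosmetic difference is that you hedge the LLT cost as linear ``up to logarithmic factors,'' whereas the paper invokes the genuinely linear $\ord(U+V)$ and $\ord(X+Y)$ bounds of \cite[Cor.~5]{Lucet97}; the latter is what makes the initialization exactly $\ord(X+U)$ rather than $\wt{\ord}(X+U)$, so your concluding claim for the initialization is right but should rest on the linear bound, not the hedged one.
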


The requirements of Assumption~\ref{As:grids}-\ref{As:grid complexity} will be discussed in Section~\ref{subsec:grid construction}. 
Recall that each iteration of VI (in primal domain) has a complexity of $\ord (XUWE)$, 
where $E$ denotes the complexity of the extension operation used in~\eqref{eq:d-DP op}.  
This observation points to a basic characteristic of the proposed approach: 
ConjVI reduces the quadratic complexity of VI to a linear one by replacing the minimization operation in the primal domain with a simple addition in the conjugate domain. 
Hence, for the problem class of Assumption~\ref{As:prob data}, ConjVI is expected to lead to a reduction in the computational cost. 
We note that ConjVI, like VI and other approximation schemes that utilize discretization/abstraction of the continuous state and input spaces, still suffers from the so-called ``curse of dimensionality.'' 
This is because the sizes $X$ and $U$ of the discretizations increase exponentially with the dimensions $n$ and $m$ of the corresponding spaces. 
However, for ConjVI, this exponential increase is of rate $\max \{ m,n \}$, compared to the rate $m+n$ for VI.

Let us also note that the most crucial step that allows the speedup discussed above is the \emph{interpolative discrete conjugation} in \eqref{eq:d-CDP op TV} that approximates $\lftdd{\varphi}$ at the point $\dynx(x)$. 
In this regard, notice that we can alternatively compute $\lftdd{\varphi}\big( \dynx(x) \big) = \max_{y \in \setg{Y}} \left\{ \inner{y}{\dynx(x)} - \disc{\varphi}(y) \right\} $ exactly via enumeration over $y \in \setg{Y}$ for each $x \in \setd{X}$ (then, the computation of $\lftdd{\varphi}:\setg{Z} \ra \R$ in \eqref{eq:d-CDP op phi conj} is not needed anymore).  
However, this approach requires $\ord (XY) = \ord(X^2)$ operations in the last step, hence rendering the proposed approach computationally impractical. 
Of course, the application of interpolative discrete conjugation has its cost: 
The LERP extension in \eqref{eq:d-CDP op TV} can lead to non-convex outputs (even if Assumption~\ref{As:convex} holds true). 
This, in turn, can introduce a dualization error. 
We finish with the following result on the error of the proposed ConjVI algorithm. 

\begin{Thm}[Error] \label{thm:error}
Let Assumptions~\ref{As:convex}, \ref{As:grids}-\ref{As:grid V}\&\ref{As:grid Z}, and  \ref{As:extension op}-\ref{As:ext nonexp} hold. 
Consider the true optimal value function $J\opt = \dpo J\opt : \setc{X} \ra \R$ and its discretization $\disc{J\opt}: \setd{X} \ra \R$, and let Assumption~\ref{As:extension op}-\ref{As:ext error} hold for $J\opt$. 
Also, let $\disc{\wh{J}} : \setd{X} \ra \R$ be the output of Algorithm~\ref{alg:d-CDP separ cost}. 
Then, 
\begin{equation} \label{eq:error}
\Vert\disc{\wh{J}} - \disc{J\opt}\Vert_{\infty} \leq \frac{\gamma (e_\mathrm{e} + e_\mathrm{t})  + e_\mathrm{d}}{1-\gamma},
\end{equation}
where $e_\mathrm{d} = e_\mathrm{u} + e_\mathrm{v} + e_\mathrm{x} + e_\mathrm{y} + e_\mathrm{z}$, and
\begin{subequations} \label{eq:error terms}
\begin{align}
e_\mathrm{u} &= c_\mathrm{u} \cdot  \dish (\setc{U}, \setd{U}),  \\
e_\mathrm{v} &= c_\mathrm{v} \cdot \dish \big(\co( \setg{V}), \setg{V}\big), \label{eq:error term v} \\
e_\mathrm{x} &=   c_\mathrm{x} \cdot  \dish \big(\setc{X}, \setd{X}\big),\\ 
e_\mathrm{y} &= c_\mathrm{y} \cdot \max_{x \in \setd{X}} \dist\big(\partial  (J\opt -\costx) (x),\setg{Y}\big), \label{eq:error term y}\\ 
e_\mathrm{z} &= c_\mathrm{z} \cdot \dish \big(\dynx(\setd{X}), \setg{Z}\big), \label{eq:error term z}
\end{align}
\end{subequations}
with constants $c_\mathrm{u}, c_\mathrm{v}, c_\mathrm{x}, c_\mathrm{y}, c_\mathrm{z} > 0$ depending on the problem data.
\end{Thm}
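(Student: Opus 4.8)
The plan is to split the total error into an \emph{iteration error} governed by the termination rule and a \emph{per-iteration approximation error} of the d-CDP operator, and to close both using the $\gamma$-contractiveness of $\dcdpo$ from Theorem~\ref{thm:convergence} (whose hypotheses are implied by those assumed here). Since $\dcdpo$ is $\gamma$-contractive in the infinity-norm, it admits a unique fixed point $\disc{J}_\infty : \setd{X} \ra \R$, and I would start from
\begin{equation*}
\norm{\disc{\wh{J}} - \disc{J\opt}}_\infty \leq \norm{\disc{\wh{J}} - \disc{J}_\infty}_\infty + \norm{\disc{J}_\infty - \disc{J\opt}}_\infty .
\end{equation*}
For the first term, let $\disc{J}$ and $\disc{J}_+ = \dcdpo \disc{J} = \disc{\wh{J}}$ be the last two iterates, so the termination criterion gives $\norm{\disc{J}_+ - \disc{J}}_\infty < e_\mathrm{t}$; writing $\disc{J}_\infty = \dcdpo \disc{J}_\infty$ and using contraction, a one-line estimate yields $\norm{\disc{J} - \disc{J}_\infty}_\infty \leq e_\mathrm{t}/(1-\gamma)$, hence $\norm{\disc{\wh{J}} - \disc{J}_\infty}_\infty = \norm{\dcdpo \disc{J} - \dcdpo \disc{J}_\infty}_\infty \leq \gamma e_\mathrm{t}/(1-\gamma)$. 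For the second term, the fixed-point property and contraction give $\norm{\disc{J}_\infty - \disc{J\opt}}_\infty \leq \gamma \norm{\disc{J}_\infty - \disc{J\opt}}_\infty + \norm{\dcdpo \disc{J\opt} - \disc{J\opt}}_\infty$, so the whole claim reduces to the per-iteration bound $\norm{\dcdpo \disc{J\opt} - \disc{J\opt}}_\infty \leq \gamma e_\mathrm{e} + e_\mathrm{d}$. Substituting $\norm{\disc{J}_\infty - \disc{J\opt}}_\infty \leq (\gamma e_\mathrm{e} + e_\mathrm{d})/(1-\gamma)$ and adding the two contributions reproduces~\eqref{eq:error} exactly.

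\textbf{The per-iteration bound.} The heart of the argument is $\norm{\dcdpo \disc{J\opt} - \disc{J\opt}}_\infty \leq \gamma e_\mathrm{e} + e_\mathrm{d}$. Since Assumption~\ref{As:convex} is in force, Corollary~\ref{cor:equivalnece CDP and DP} gives $\cdpo J\opt = \dpo J\opt = J\opt$, so the \emph{exact} conjugate chain~\eqref{eq:CDP op conj} applied to $J\opt$ returns $J\opt$ on $\setd{X}$. I would therefore compare the discrete chain~\eqref{eq:d-CDP op} applied to $\disc{J\opt}$ against~\eqref{eq:CDP op conj} applied to $J\opt$, stage by stage, accumulating the mismatch. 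The term $\gamma e_\mathrm{e}$ comes entirely from step~\eqref{eq:d-CDP op eps}: replacing $\gamma\,\EE_w J\opt(\cdot+w)$ by the filtered extension incurs error at most $\gamma e_\mathrm{e}$ by Assumption~\ref{As:extension op}-\ref{As:ext error} together with $\sum_{w} p(w)=1$. The five contributions comprising $e_\mathrm{d}$ arise from the primal/dual discretizations and the two interpolative (LERP) conjugations: $e_\mathrm{x}$ from the discrete conjugate $\lftdd{\varepsilon}$ taken over the finite set $\setd{X}\subset\setc{X}$ in~\eqref{eq:d-CDP op eps conj}; $e_\mathrm{y}$ from evaluating that conjugate only on $\setg{Y}$, controlled by how well $\setg{Y}$ reaches the relevant subgradients $\partial(J\opt - \costx)(x)$; $e_\mathrm{u}$ from the discrete conjugate $\lftdd{\costu}$ over $\setd{U}\subset\setc{U}$ in~\eqref{eq:d-CDP op Ci conj}; $e_\mathrm{v}$ from the LERP extension of $\lftdd{\costu}$ at $-B\tr y$ over $\setg{V}$, where Assumption~\ref{As:grids}-\ref{As:grid V} ($\co(\setsg{V}) \supseteq \setc{L}(\disc{\costu})$) ensures the evaluation point is covered so only interpolation occurs; and $e_\mathrm{z}$ from the final LERP conjugation of $\lftdd{\varphi}$ at $\dynx(x)$ over $\setg{Z}$ in~\eqref{eq:d-CDP op phi conj}--\eqref{eq:d-CDP op TV}, where Assumption~\ref{As:grids}-\ref{As:grid Z} ($\co(\setg{Z}) \supseteq \dynx(\setd{X})$) again keeps the evaluation inside the grid.

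\textbf{Main obstacle.} The contraction-and-termination bookkeeping is routine; the genuine work lies in the per-iteration decomposition, specifically the quantitative control of the two discrete-conjugation errors and the two interpolative-conjugation errors. Unlike a single Lipschitz estimate, conjugation couples the primal discretization error to the choice of dual grid, so each of $e_\mathrm{x},e_\mathrm{y}$ (and $e_\mathrm{u},e_\mathrm{v}$) must be extracted from a \emph{joint} primal/dual error analysis, while $e_\mathrm{z}$ must account for the fact that the LERP extension can destroy convexity, so the exactness $\bcc{[\cdot]}=[\cdot]$ underlying Proposition~\ref{prop:CDP reform} no longer holds and a genuine interpolation residual appears. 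I would isolate these as auxiliary lemmas — a discrete-conjugation error bound phrased through one-sided Hausdorff distances and a LERP error bound on a covering grid, adapting the corresponding estimates from~\cite{Kolari21dCDParxiv} — and then assemble them with the problem-dependent constants $c_\mathrm{u},c_\mathrm{v},c_\mathrm{x},c_\mathrm{y},c_\mathrm{z}$. Propagating these constants correctly through the three successive conjugations, while keeping every evaluation point inside the relevant convex hull so that no extrapolation error enters, is the most delicate step.
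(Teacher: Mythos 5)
Your proposal is correct and follows essentially the same route as the paper: the same decomposition through the fixed point of the d-CDP operator (termination error $\gamma e_\mathrm{t}/(1-\gamma)$ plus fixed-point error $(\gamma e_\mathrm{e}+e_\mathrm{d})/(1-\gamma)$), the same reduction to the per-iteration bound $\Vert\dcdpo \disc{J\opt} - \disc{[\dpo J\opt]}\Vert_\infty \le \gamma e_\mathrm{e} + e_\mathrm{d}$ via Corollary~\ref{cor:equivalnece CDP and DP}, and the same stage-by-stage attribution of the five discretization errors to the two discrete conjugations and two LERP evaluations, controlled by the conjugation/LERP estimates adapted from~\cite{Kolari21dCDParxiv}. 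The only detail worth adding is an explicit remark that Assumptions~\ref{As:prob data} and~\ref{As:convex} guarantee $J\opt$ is Lipschitz and convex, which is what licenses applying the per-iteration bound to $J\opt$ itself.
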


Let us first note that Assumption~\ref{As:convex} implies that the DP and CDP operators preserve convexity, and they both have the true optimal value function $J\opt$ as their fixed point (i.e., the duality gap is zero). 
Otherwise, the proposed scheme can suffer from large errors due to dualization.
Moreover, Assumptions~\ref{As:grids}-\ref{As:grid V}\&\ref{As:grid Z} on the grids $\setg{V}$ and $\setg{Z}$ are required for bounding the error of approximate discrete conjugations using LERP in \eqref{eq:d-CDP op phi} and \eqref{eq:d-CDP op TV}; see the proof of Lemmas~\ref{lem:phi error} and \ref{lem:error d-CDP}. 
The remaining sources of error in the proposed approximate implementation of ConjVI are captured by the three error terms in \eqref{eq:error}: 
\begin{itemize}
\item[(i)] $e_\mathrm{e}$ is due to the approximation of the value function using the extension operator~$\lerp{[\cdot]}$; 
\item[(ii)] $e_\mathrm{t}$ corresponds to the termination of the algorithm after a finite number of iterations;
\item[(iii)] $e_\mathrm{d}$ captures the error due to the discretization of the primal and dual state and input domains.
\end{itemize}

We again finish with the following remarks on the modification of the proposed algorithm for deterministic systems and analytically available $\lftc{\costu}$.

\begin{Rem}[Deterministic systems] If the dynamics are deterministic, then the complexity of each iteration of Algorithm~\ref{alg:d-CDP separ cost} reduces to $ \wt{\ord}(X)$.
Moreover, in this case, the error term $e_\mathrm{e}$ disappears.
\end{Rem}

\begin{Rem}[Analytically available $\lftc{\costu}$]
If the conjugate $\lftc{\costu}$ of the input cost is analytically available and used in~\eqref{eq:d-CDP op phi} instead of the LERP extension $\llerp{\lftdd{\costu}}$, the error term due to discretization modifies to $e_\mathrm{d} = e_\mathrm{x} + e_\mathrm{y} + e_\mathrm{z}$. 
That is, the error terms $e_\mathrm{u}$ and $e_\mathrm{v}$ corresponding to the discretization of the primal and dual input spaces disappear.
\end{Rem}

\subsection{Construction of the grids}\label{subsec:grid construction} 

In this subsection, we provide specific guidelines for the construction of the grids $\setg{Y}$, $\setg{V}$ and $\setg{Z}$. 
We note that these discrete sets must be \emph{grid-like} since they form the dual grid for the three conjugate transforms that are handled using LLT. 
The presented guidelines aim to minimize the error terms in \eqref{eq:error terms} while taking into account the properties laid out in Assumption~\ref{As:grids}. 
In particular, the schemes described below satisfy the requirements of Assumption~\ref{As:grids}-\ref{As:grid complexity}.

\noindent\textbf{Construction of $\setg{V}$.} Assumption~\ref{As:grids}-\ref{As:grid V} and the error term $e_\mathrm{v}$ in \eqref{eq:error term v} 
suggest that we find the smallest input dual grid $\setg{V}$ such that $\co (\setsg{V}) \supseteq \setc{L} (\disc{\costu})$. 
This latter condition essentially means that $\setg{V}$ must ``more than cover the range of slope'' of the function $\disc{\costu}$; recall that $\setc{L} (\disc{\costu}) = \Pi_{j=1}^m \left[\lip_j^- (\disc{\costu}), \lip_j^- (\disc{\costu})\right]$, where $\lip_j^- (\disc{\costu})$ (resp. $\lip_j^+ (\disc{\costu})$) is the minimum (resp. maximum) slope of $\disc{\costu}$ along the $j$-th dimension. 
Hence, we need to compute/approximate $\lip_j^\pm (\disc{\costu})$ for $j=1,\ldots,m$. 
A conservative approximation is $\lip_j^- (\costu) = \min \nicefrac{\partial \costu}{\partial u_j}$ and $\lip_j^+ (\costu) = \max \nicefrac{\partial \costu}{\partial u_j}$, assuming $\costu$ is differentiable. 
Alternatively, we can directly use the discrete input cost $\disc{\costu}$ for computing $\lip_j^\pm (\disc{\costu})$. 
In particular, if the domain~$\setd{U} = \setg{U} = \Pi_{j=1}^{m} \setg{U}_{j}$ of $\disc{\costu}$ is grid-like and $\costu$ is convex,  
we can take $\lip_j^- (\disc{\costu})$ (resp. $\lip_j^+ (\disc{\costu})$) to be the minimum first forward difference (resp. maximum last backward difference) of $\disc{\costu}$ along the $j$-th dimension (this scheme requires $\ord (U)$ operations). 
Having $\lip_j^\pm (\disc{\costu})$ at our disposal, we can then construct $\setsg{V} = \Pi_{j=1}^{m} {\setsg{V}}_{j}$ such that, in each dimension $j$, ${\setsg{V}}_{j}$ is uniform and has the same cardinality as $\setg{U}_j$, and $\co ({\setsg{V}}_{j}) = \left[\lip_j^- (\disc{\costu}),  \lip_j^+ (\disc{\costu})\right]$. 
Finally, we construct $\setg{V}$ by extending $\setsg{V}$ uniformly in each dimension (by adding a smaller and a larger element to $\setsg{V}$ in each dimension while preserving the resolution in that dimension).  

\noindent\textbf{Construction of $\setg{Z}$.} According to Assumption~\ref{As:grids}-\ref{As:grid Z}, the grid $\setg{Z}$ must be constructed such that $\co (\setg{Z}) \supseteq \dynx\big(\setd{X}\big)$. 
This can be simply done by finding the vertices of the smallest box that contains the set $\dynx\big(\setd{X}\big)$. 
Those vertices give the diameter of $\setg{Z}$ in each dimension. 
We can then, for example, take $\setg{Z}$ to be the uniform grid with the same cardinality as $\setg{Y}$ in each dimension (so that $Z=Y$). 
This way, $$\dish \big(\dynx(\setd{X}), \setg{Z}\big) \leq \dish \big(\co(\setg{Z}), \setg{Z}\big),$$ 
and hence $e_\mathrm{z}$ in \eqref{eq:error term z} reduces by using finer grids $\setg{Z}$. 
This construction has a time complexity of $\ord (X)$. 

\noindent\textbf{Construction of $\setg{Y}$.} Construction of the state dual grid $\setg{Y}$ is more involved. 
According to Theorem~\ref{thm:error}, we need to choose a grid that minimizes $e_\mathrm{y}$ in \eqref{eq:error term y}. 
This can be done by choosing $\setg{Y}$ such that $\setg{Y} \cap \partial (J\opt - \costx) \neq \emptyset$ for all $x \in \setd{X}$ so that $e_\mathrm{y} = 0$. 
Even if we had access to the optimal value function $J\opt$, satisfying such a condition could lead to a dual grid $\setg{Y} \subset \R^n$ of size $\ord (X^n)$. 
Such a large size violates Assumption~\ref{As:grids}-\ref{As:grid complexity} on the size of $\setg{Y}$, 
and essentially renders the proposed algorithm impractical for dimensions $n \geq 2$. 
A more practical condition is $\co (\setg{Y}) \cap \partial (J\opt - \costx) \neq \emptyset$ for all $x \in \setd{X}$ so that 
$$\max_{x \in \setd{X}} \dist\big(\partial  (J\opt -\costx) (x),\setg{Y}\big) \leq \dish\big(\co(\setg{Y}),\setg{Y}\big),$$ 
and hence $e_\mathrm{y}$ reduces by using a finer grid $\setg{Y}$. 
The latter condition is satisfied if $\co (\setg{Y}) \supseteq \setc{L} (J\opt - \costx)$, i.e., if $\co (\setg{Y})$ ``covers the range of slope'' of $(J\opt - \costx)$. 
Hence, we need to approximate the range of slope of $(J\opt - \costx)$. 
To this end, we first use the fact that $J\opt$ is the fixed point of DP operator~\eqref{eq:DP op} to approximate $\rng(J\opt - \costx)$ by 
$$
R = \frac{\rng (\disc{\costu})+\gamma \cdot \rng (\disc{\costx})}{1-\gamma}
.$$ 
We then construct the gird $\setg{Y} = \Pi_{i=1}^{n} \setg{Y}_{i}$ such that, for each dimension $i$, we have 
\begin{equation}\label{eq:condition}
\pm  \frac{\alpha R}{\diam{\setd{X}}^{i}} \in \co(\setg{Y}_i)
\end{equation}
where $\diam{\setd{X}}^{i}$ denotes the diameter of the projection of $\setd{X}$ on the $i$-th dimension. 
Here, the coefficient $\alpha > 0$ is a scaling factor mainly depending on the dimension of the state space. In particular, by setting $\alpha = 1$, the value $\nicefrac{R}{\diam{\setd{X}}^{i}}$ is the slope of a linear function with range $R$ over the domain $\diam{\setd{X}}^{i}$.     
This construction has a one-time computational cost of $\ord(X+U)$ for computing $\rng (\disc{\costu})$ and $\rng (\disc{\costx})$. 

\noindent\textbf{Dynamic construction of $\setg{Y}$.}
Alternatively, we can construct $\setg{Y}$ \emph{dynamically} at each iteration
to minimize the corresponding error in each application of the d-CDP operator given by  (see Lemma~\ref{lem:phi error 2} and Proposition~\ref{prop:error d-CDP})
$$
e_\mathrm{y}  = c_\mathrm{y} \cdot \max_{x \in \setd{X}} \dist\big(\partial  (\dpo J-\costx) (x),\setg{Y}\big).$$ 
This means that line~\ref{line_alg2:const Y} in Algorithm~\ref{alg:d-CDP separ cost} is moved inside the iterations, after line~\ref{line_alg2:iter 1}. 
Similar to the static scheme described above, the aim here is to construct $\setg{Y}$ such that $\co (\setg{Y}) \supseteq \setc{L} (\dpo J - \costx)$. 
Since we do not have access to $\dpo J$ (it is the output of the current iteration), 
we can again use the definition of the DP operator~\eqref{eq:DP op} to approximate $\rng(\dpo J - \costx)$ by 
$$R = \rng (\disc{\costu})+ \gamma \cdot \rng (\disc{J}),$$ 
where $\disc{J}$ is the output of the previous iteration. 
We then construct the gird $\setg{Y} = \Pi_{i=1}^{n} \setg{Y}_{i}$ such that, for each dimension $i$, the condition~\eqref{eq:condition} holds. 
This construction has a one-time computational cost of $\ord(U)$ for computing $\rng (\disc{\costu})$ and a per iteration computational cost of $\ord(X)$ for computing $\rng (\disc{J})$. 
Notice, however, that under this dynamic construction, the error bound of Theorem~\ref{thm:error} does not hold true. 
More importantly, with a dynamic grid $\setg{Y}$ that varies in each iteration, there is no guarantee for ConjVI to converge.

\section{Numerical simulations}
\label{sec:numerical ex}
In this section, we compare the performance of the proposed ConjVI algorithm with the benchmark VI algorithm (in primal domain) through three numerical examples. 
For the first example, we focus on a synthetic system satisfying the conditions of assumptions considered in this study to examine our theoretical results. 
We then showcase the application of ConjVI in solving the optimal control problem of an inverted pendulum and a batch reactor. 
The simulations were implemented via MATLAB version R2017b, on a PC with an Intel Xeon 3.60~GHz processor and 16~GB RAM.  
We also provide the ConjVI MATLAB package~\cite{Kolari21ConjVIMAT} for the implementation of the proposed algorithm. 
The package also includes the numerical simulations of this section. 
We note that multiple routines in the developed package are borrowed from the d-CDP MATLAB package~\cite{Kolari21dCDPMAT}. 
Also, for the discrete conjugation (LLT), we used the MATLAB package (in particular, the \texttt{LLTd} routine) provided in \cite{Lucet97}. 

\subsection{Example 1 -- Synthetic}
\label{subsec:numerical ex 1}

We consider the linear system $x^+ = Ax+Bu+w$ with $A = [2 \ \ 1; \ 1 \ \ 3]$, $B = [1 \ \ 1; \ 1 \ \ 2]$. 
The problem of interest is the infinite-horizon, optimal control of this system with cost functions $\costx(x) = 10\norm{x}_2^2$ and $\costu(u) = e^{|u_1|} + e^{|u_2|} - 2$, and discount factor $\gamma = 0.95$. 
We consider state and input constraint sets~$\setc{X} = [-1,1]^2$ and $\setc{U} = [-2,2]^2$, respectively. 
The disturbance is assumed to have a uniform distribution over the finite support $\setd{W} = \{0,\pm 0.05 \}\times \{0 \}$ of size $W=3$. 
Notice how the stage cost is a combination of a quadratic term (in state) and an exponential term (in input). 
Particularly, the control problem at hand does not have a closed-form solution. 
We use uniform, grid-like discretizations $\setg{X}$ and $\setg{U}$ for the state and input spaces such that $\co (\setg{X})= \setc{X}$ and $\co (\setg{U})= \setc{U}$. 
This choice allows us to deploy \emph{multilinear interpolation}, which is non-expansive, as the extension operator $\lerp{[\cdot]}$ in the d-DP operation~\eqref{eq:d-DP op} in VI, and in the d-CDP operation~\eqref{eq:d-CDP op eps} in ConjVI. 
The grids $\setg{V}, \setg{Z} \subset \R^2$ are also constructed uniformly, following the guidelines provided in Section~\ref{subsec:alg}. 
For the construction of $\setg{Y} \subset \R^2$, we also follow the guidelines of Section~\ref{subsec:alg} with $\alpha = 1$. 
In particular, we also consider the \emph{dynamic} scheme for the construction of $\setg{Y}$ in ConjVI (hereafter, referred to as ConjVI-d). 
Moreover, in each implementation of VI and ConjVI(-d), all of the involved grids ($\setg{X}, \setg{U}, \setg{Y}, \setg{V}, \setg{Z}$) are chosen to be of the same size~$N^2$ (with $N$ points in each dimension). 
We are particularly interested in the performance of these algorithms, as $N$ increases. 
We note that the described setup satisfies all of the assumptions in this study. 

The results of our numerical simulations are shown in Figure~\ref{fig:ex 1s}. 
As shown in Figures~\ref{fig:ex 1s conv}, both VI and ConjVI are indeed convergent with a rate less than or equal to the discount factor $\gamma = 0.95$; see Theorem~\ref{thm:convergence}. 
In particular, ConjVI terminates in $k_\mathrm{t} = 55$ iterations, 
compared to $k_\mathrm{t} =102$ iterations required for VI to reach the termination bound $e_\mathrm{t} = 0.001$. 
Not surprisingly, this faster convergence, combined with the lower time complexity of ConjVI in each iteration, leads to a significant reduction in the running time of this algorithm compared to VI. 
This effect can be seen in Figure~\ref{fig:ex 1s runtime}, where the run-time of ConjVI for $N=41$ is an order of magnitude less than that of VI for $N=11$. 
In this regard, we note that the setting of this numerical example leads to $\ord(k_\mathrm{t}N^4W)$ and $\ord(k_\mathrm{t}N^2W)$ time complexities for VI and ConjVI, respectively; see Theorem~\ref{thm:complexity} and the discussion after that. 
Indeed, the running times in Figure~\ref{fig:ex 1s runtime} match these complexities. 

Since we do not have access to the true optimal value function, in order to evaluate the performance of the outputs of the VI and ConjVI, we consider the performance of the greedy policy 
$$\mu(x) \in \argmin_{u \in \setc{U}(x) \cap \setg{U}} \big\{  \cost(x,u) + \gamma \cdot \EE_w \llerp{\disc{J}}\big(g(x,u,w)\big) \big\}, 
$$ 
w.r.t. the discrete value function $\disc{J}$ computed using these algorithms 
(we note that, for finding the greedy action, we used the same discretization $\setg{U}$ of the input space and the same extension $\llerp{\disc{J}}$ of the value function as the one used in VI and ConjVI, however, this need not be the case in general). 
Figure~\ref{fig:ex 1s cost} reports the average cost of one hundred instances of the optimal control problem with greedy control actions. 
As shown, the reduction in the run-time in ConjVI comes with an increase in the cost of the controlled trajectories. 

Let us now consider the effect of \emph{dynamic} construction of the state dual grid $\setg{Y}$. 
As can be seen in Figure~\ref{fig:ex 1s conv}, using a dynamic $\setg{Y}$ leads to a slower convergence (ConjVI-d terminates in $k_\mathrm{t}=100$ iterations). 
We note that the relative behavior of the convergence rates in Figures~\ref{fig:ex 1s conv} was also seen for other grid sizes in the discretization scheme.
However, we see a small increase in the running time of ConjVI-d compared to ConjVI since the per iteration complexity for ConjVI-d is again of $\ord(k_\mathrm{t}N^2W)$; see Figure~\ref{fig:ex 1s runtime}. 
More importantly, as depicted in Figure~\ref{fig:ex 1s cost}, ConjVI-d shows almost the same performance as VI when it comes to the quality of the greedy actions. 
This is because the dynamic construction of $\setg{Y}$ in ConjVI-d uses the available computational power (related to the size of the discretization) smartly by finding the smallest grid $\setg{Y}$ in each iteration, to minimize the error of that same iteration. 

We note that our simulations show that for the \emph{deterministic} system, ConjVI-d has a similar converge rate as ConjVI. 
This effect can be seen in Figure~\ref{fig:ex 1d conv}, where ConjVI-d terminates in 10 iterations. 
Interestingly, in this particular example, ConjVI converges to the fixed point after 7 iterations ($\disc{J}_{8} = \dcdpo \disc{J}_{7}$) for the deterministic system. 
Let us finally note that the conjugate $\lftc{\costu}$ of the input cost in the provided example is indeed analytically available. 
One can use this analytic representation to exactly compute $\lftc{\costu}$ in \eqref{eq:d-CDP op TV} and avoid the corresponding numerical approximation. 
With such a modification, the computational cost reduces, however, our numerical experiments show that for the provided example, the ConjVI outputs effectively the same value function within the same number of iterations (results are not shown here).

\begin{figure}[t]
\begin{subfigure}{.33\textwidth}
  \centering
  \includegraphics[width=1\linewidth]{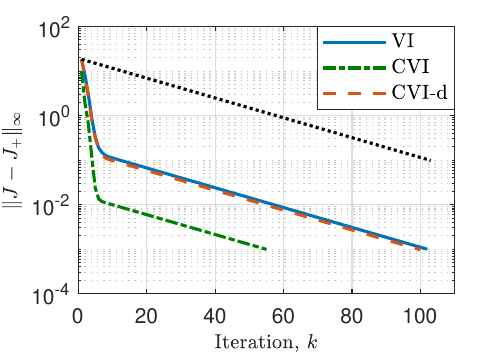}
  \caption{}
  \label{fig:ex 1s conv}
\end{subfigure}%
\begin{subfigure}{.33\textwidth}
  \centering
  \includegraphics[width=1\linewidth]{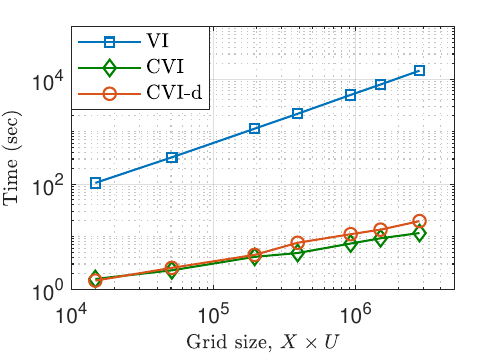}
  \caption{}
  \label{fig:ex 1s runtime}
\end{subfigure}%
\begin{subfigure}{.33\textwidth}
  \centering
  \includegraphics[width=1\linewidth]{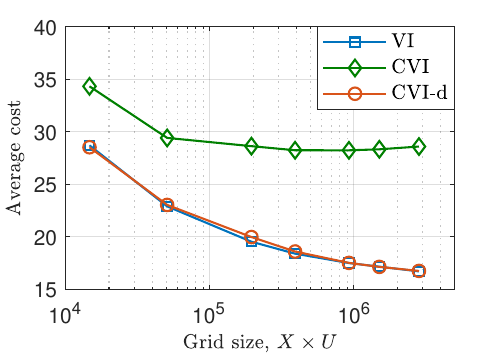}
  \caption{}
  \label{fig:ex 1s cost}
\end{subfigure}
\caption{VI vs. ConjVI (CVI) -- synthetic example with \emph{stochastic} dynamics $x^+ = Ax+Bu+w$: 
(a)~Convergence rate for $N=41$; (b)~Running time; (c)~Average cost of one hundred instances of the control problem with random initial conditions over $T=100$ time steps. 
The black dashed-dotted line in (a) corresponds to exponential convergence with coefficient $\gamma = 0.95$. 
CVI-d corresponds to \emph{dynamic} construction of the dual grid $\setg{Y}$ in the ConjVI algorithm.}
\label{fig:ex 1s}
\end{figure} 

\begin{figure}[t]
\includegraphics[width=.4\linewidth]{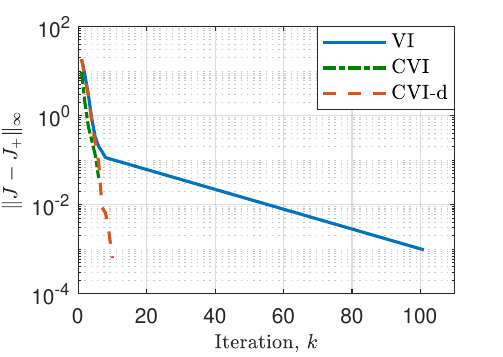}
\caption{Convergence of VI and ConjVI with \emph{deterministic} dynamics $x^+=Ax+Bu$; cf. Figure~\ref{fig:ex 1s conv}.}
\label{fig:ex 1d conv}
\end{figure}

\subsection{Example 2 -- Inverted pendulum}
\label{subsec:numerical ex 2}

We use the setup (model and stage cost) of~\cite[App.~C.2.2]{Kolari21dCDParxiv} with discount factor $\gamma = 0.95$. 
In particular, the state and input costs are both quadratic ($\norm{\cdot}_2^2$), and the discrete-time, nonlinear dynamics is of the form $x^+ = \dynx(x) + B u + w$, where
$$\dynx(x_1,x_2) = \left[ \begin{array}{c} x_1+\alpha_{12} x_2 \\ \alpha_{21} \sin x_1 + \alpha_{22} x_2 \end{array} \right],\ B = \left[ \begin{array}{c} 0 \\ \beta \end{array} \right], \quad (\alpha_{12}, \alpha_{21}, \alpha_{22}, \beta \in \R).$$
State and input constraints are described by $\setc{X} = [-\frac{\pi}{3},\frac{\pi}{3}]\times[-\pi,\pi]\subset \R^2$ and $\setc{U} = [-3,3]\subset \R$. 
The disturbance has a uniform distribution over the finite support $\setg{W} = \{0, \pm0.025\frac{\pi}{3}, \pm0.05\frac{\pi}{3} \} \times \{0, \pm0.025\pi, \pm0.05\pi \} \subset \R^2$ of size $W=5^2$. 
We use uniform, grid-like discretizations $\setg{X}$ and $\setg{U}$ for the state and input spaces such that $\co (\setg{X})= [-\frac{\pi}{4},\frac{\pi}{4}]\times[-\pi,\pi]\subset \setc{X}$ and $\co (\setg{U})= \setc{U}$. 
This choice of discrete state space $\setg{X}$ particularly satisfies the feasibility condition of Assumption~\ref{As:feasible discrete}. 
(Note however that the set $\setc{X}$ does not satisfy the feasibility condition of Assumption~\ref{As:prob data}-\ref{As:constr}). 
Also, we use \emph{nearest neighbor} extension (which is non-expansive) for the extension operators in~\eqref{eq:d-DP op} for VI and in~\eqref{eq:d-CDP op eps} for ConjVI. 
The grids $\setg{V} \subset \R$ and  $\setg{Z}, \setg{Y} \subset \R^2$ are also constructed uniformly, following the guidelines of Section~\ref{subsec:grid construction} (with $\alpha = 1$). 
We again also consider the \emph{dynamic} scheme for the construction of $\setg{Y}$.
Moreover, in each implementation of VI and ConjVI(-d) the termination bound is $e_\mathrm{t} = 0.001$, and all of the involved grids are chosen to be of the same size $N$ in each dimension, i.e., $X = Y = Z = N^2$ and $U = V = N$. 

The results of simulations are shown in Figures~\ref{fig:ex 2s} and \ref{fig:ex 2d conv}. 
As reported, we essentially observe the same behaviors as before. 
In particular, the application of ConjVI(-d), especially for deterministic dynamics, leads to faster convergence and a significant reduction in the running time; see Figures~\ref{fig:ex 2s conv}, \ref{fig:ex 2s runtime} and \ref{fig:ex 2d conv}. 
Note that Figure~\ref{fig:ex 2d conv} also shows the non-monotone behavior of ConjVI-d for scaling factor $\alpha = 3$. 
In this regard, recall that when the grid $\setg{Y}$ is constructed dynamically and varies at each iteration, the d-CDP operator is not necessarily contractive. 
Moreover, as shown in Figures~\ref{fig:ex 2s runtime} and \ref{fig:ex 2s cost}, this dynamic scheme leads to a huge improvement in the performance of the corresponding greedy policy at the expense of a small increase in the computational cost.

\begin{figure}[t]
\begin{subfigure}{.33\textwidth}
  \centering
  \includegraphics[width=1\linewidth]{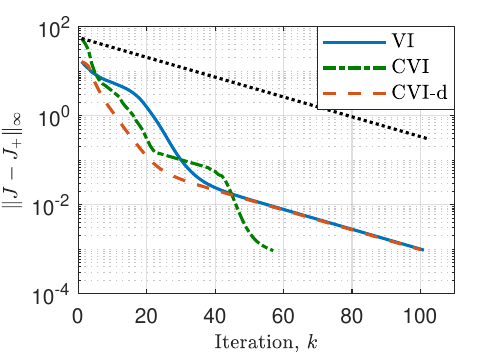}
  \caption{}
  \label{fig:ex 2s conv}
\end{subfigure}%
\begin{subfigure}{.33\textwidth}
  \centering
  \includegraphics[width=1\linewidth]{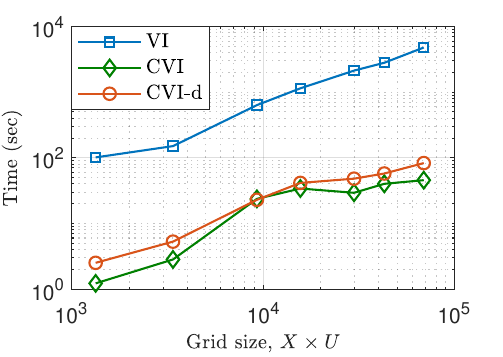}
  \caption{}
  \label{fig:ex 2s runtime}
\end{subfigure}%
\begin{subfigure}{.33\textwidth}
  \centering
  \includegraphics[width=1\linewidth]{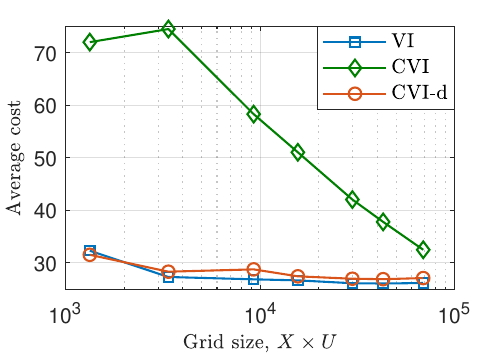}
  \caption{}
  \label{fig:ex 2s cost}
\end{subfigure}%
\caption{VI vs. ConjVI (CVI) -- optimal control of noisy inverted pendulum: 
(a)~Convergence rate for $N=41$; (b)~Running time; (c)~Average cost of one hundred instances of the control problem with random initial conditions over $T=100$ time steps. 
The black dashed-dotted line in (a) corresponds to exponential convergence with coefficient $\gamma = 0.95$. 
CVI-d corresponds to \emph{dynamic} construction of the dual grid $\setg{Y}$ in the ConjVI algorithm.}
\label{fig:ex 2s}
\end{figure} 

\begin{figure}[t]
\includegraphics[width=.4\linewidth]{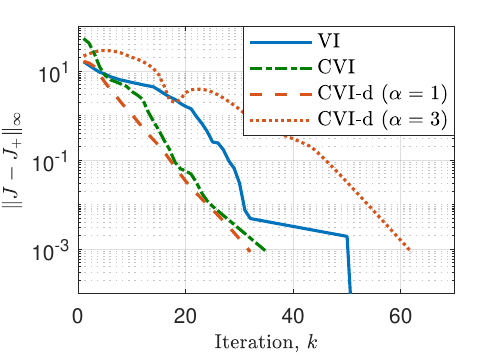}
\caption{Convergence of VI and ConjVI with \emph{deterministic} dynamics $x^+=\dynx (x) + Bu$; cf. Figure~\ref{fig:ex 2s conv}.}
\label{fig:ex 2d conv}
\end{figure} 

\subsection{Example 3 -- Batch Reactor}
\label{subsec:numerical ex 3}

Our last numerical example concerns the optimal control of a system with four states and two input channels, namely, an unstable batch reactor. 
The setup (dynamics, cost, and constraints) are borrowed from~\cite[Sec.~6]{Arman20}. 
In particular, we consider a \emph{deterministic} linear dynamics $x^+ = Ax+Bu$, with costs $\costx (x) = 2 \norm{x}_2^2$ and $ \costu (u) = \norm{u}_2^2$, 
discount factor $\gamma = 0.95$, 
and constraints $x \in \setc{X} = [-2,2]^4 \subset \R^4$ and $ u \in \setc{U} = [-2,2]^2 \subset \R^2$. 
Once again, we use uniform, grid-like discretizations $\setg{X}$ and $\setg{U}$ for the state and input spaces such that $\co (\setg{X})= [-1,1]^4 \subset \setc{X}$ and $\co (\setg{U})= \setc{U}$. 
The grids $\setg{V} \subset \R^2$ and  $\setg{Z}, \setg{Y} \subset \R^4$ are also constructed uniformly, following the guidelines of Section~\ref{subsec:grid construction} (with $\alpha = 1$). 
Moreover, in each implementation of VI and ConjVI, the termination bound is $e_\mathrm{t} = 0.001$ and all of the involved grids are chosen to be of the same size $N$ in each dimension, i.e., $X = Y = Z = N^4$ and $U = V = N^2$.  
Finally, we note that we use \emph{multi-linear interpolation and extrapolation} for the extension operator in~\eqref{eq:d-DP op} for VI. 
Due to the extrapolation, the extension operator is no longer non-expansive and hence the convergence of VI is not guaranteed. 
On the other hand, since the dynamics is deterministic, there is no need for extension in ConjVI (recall that the scaled expectation in~\eqref{eq:d-CDP op eps} in ConjVI reduces to the simple scaling $\disc{\varepsilon} = \gamma \cdot \disc{J}$ for deterministic dynamics),
and hence the convergence of ConjVI only requires $\co (\setg{Z}) \supseteq \dynx\big(\setg{X}\big)$ and is guaranteed.  

The results of our numerical simulations are shown in Figure~\ref{fig:ex 3}. 
Once again, we see the trade-off between the time complexity and the greedy control performance in VI and ConjVI. 
On the other hand, ConjVI-d has the same control performance as VI with an insignificant increase in running time compared to ConjVI. 
In Figure~\ref{fig:ex 3 conv}, we again observe the non-monotone behavior of ConjVI-d 
(the d-CDP operator is expansive in the first six iterations). 
The VI algorithm is also showing a non-monotone behavior, where for the first nine iterations the d-DP operation is actually expansive. 
As we noted earlier, this is because the multi-linear extrapolation operation used in extension is expansive. 

\begin{figure}[t]
\begin{subfigure}{.33\textwidth}
  \centering
  \includegraphics[width=1\linewidth]{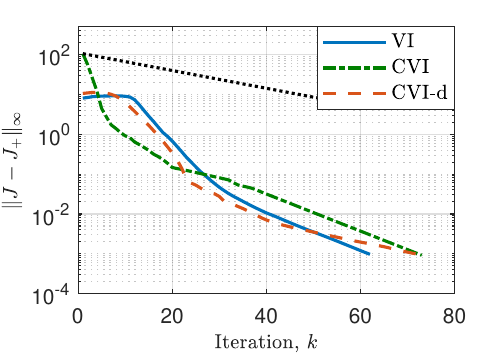}
  \caption{}
  \label{fig:ex 3 conv}
\end{subfigure}%
\begin{subfigure}{.33\textwidth}
  \centering
  \includegraphics[width=1\linewidth]{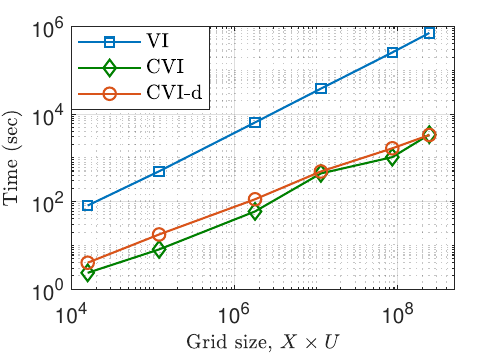}
  \caption{}
  \label{fig:ex 3 runtime}
\end{subfigure}%
\begin{subfigure}{.33\textwidth}
  \centering
  \includegraphics[width=1\linewidth]{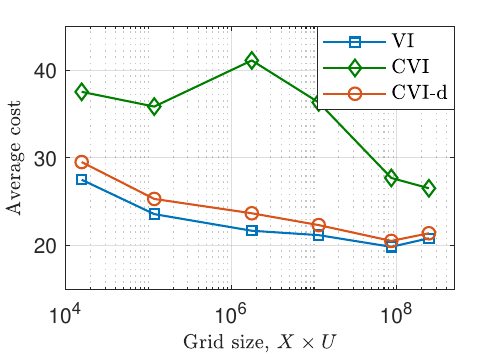}
  \caption{}
  \label{fig:ex 3 cost}
\end{subfigure}%
\caption{VI vs. ConjVI (CVI) -- optimal control of batch reactor: 
(a)~Convergence rate for $N=25$; (b)~Running time; (c)~Average cost of one hundred instances of the control problem with random initial conditions over $T=100$ time steps. 
The black dashed-dotted line in (a) corresponds to exponential convergence with coefficient $\gamma = 0.95$. 
CVI-d corresponds to \emph{dynamic} construction of the dual grid $\setg{Y}$ in the ConjVI algorithm.}
\label{fig:ex 3}
\end{figure}

\section{Final remarks}
\label{sec:conclusion}

In this paper, we proposed the ConjVI algorithm which reduces the time complexity of the VI algorithm from $\mathcal{O}(XU)$ to $\mathcal{O}(X+U)$. 
This better time complexity however comes at the expense of restricting the class of problem. 
In particular, there are two main conditions that must be satisfied in order to be able to apply the ConjVI algorithm: 
\begin{itemize}
\item[(i)] the dynamics must be of the form $x^+ = f_{\mathrm{s}}(x)+Bu+w$; and, 
\item[(ii)] the stage cost $C(x,u) = C_{\mathrm{s}}(x)+C_{\mathrm{i}}(u)$ must be separable.
\end{itemize}
Moreover, since ConjVI essentially solves the dual problem, for non-convex problems, it suffers from a non-zero duality gap. 
Based on our simulation results, we also notice a trade-off between computational complexity and control action quality: 
While ConjVI has a lower computational cost, VI generates better control actions. 
However, the dynamic scheme for the construction of state dual grid $\setg{Y}$ allows us to achieve almost the same performance as VI when it comes to the quality of control actions, with a small extra computational burden. 
In what follows, we provide our final remarks on the limitations of the proposed ConjVI algorithm and its relation to existing approximate VI algorithms.

\noindent\textbf{Relation to existing approximate VI algorithms.} 
The basic idea for complexity reduction introduced in this
study can be potentially combined with and further improve the existing sample-based VI algorithms. 
These sample-based algorithms solely focus on transforming the infinite-dimensional optimization in DP problems into computationally tractable ones, and in general, they have a time complexity of $\mathcal{O}(XU)$, depending on the product of the cardinalities of the discrete state and action spaces.
The proposed ConjVI algorithm, on the other hand, focuses on reducing this time complexity to $\mathcal{O}(X+U)$, by avoiding the minimization over input in each iteration. 
Take, for example, the aggregation technique in~\cite[Sec.~8.1]{Pow11} that leads to a piece-wise constant approximation of the value function. 
It is straightforward to combine ConjVI with this type of state space aggregation. 
Indeed, the numerical example of Section~\ref{subsec:numerical ex 2} essentially uses such aggregation by approximating the value function via nearest neighbor extension.

\noindent\textbf{Cost functions with a large Lipschitz constant.}
Recall that for the proposed ConjVI algorithm to be computationally efficient, the size $Y$ of the state dual grid $\mathbb{Y}^{\mathrm{g}}$ must be controlled by the size $X$ of the discrete state space $\setd{X}$ (Assumption~\ref{As:grids}-(iii)). 
Then, as the range of slope of the value function $J_\star$ increases, the corresponding error $e_\mathrm{y}$ in \eqref{eq:error term y} due to the discretization of the dual state space increases. 
The proposed dynamic approach for the construction of $\mathbb{Y}^{\mathrm{g}}$ partially addresses this issue by focusing on the range of slope of $J^{\mathrm{d}}_k$ in each iteration to minimize the discretization error of the same iteration $k$. 
However, when the cost function has a large Lipschitz constant, even this latter approach can fail to provide a good approximation of the value function. 
Table~\ref{tab:large L constant} reports the result of the numerical simulation of the unstable batch reactor with the stage cost 
\begin{equation}\label{eq:cost 2 BR}
C(x,u) = -\frac{4}{1+\eta} + \sum_{i=1}^4 \frac{1}{1+\eta-|x_i|} -\frac{2}{2+\eta} + \sum_{j=1}^2 \frac{1}{2+\eta-|u_j|},\quad \norm{x}_{\infty} \leq 1,\ \norm{u}_{\infty} \leq 2.
\end{equation}
Clearly, as $\eta \rightarrow 0$, we increase the range of slope of the cost function. 
As can be seen, the quality of the greedy action generated by ConjVI-d also deteriorates in this case.

\begin{table}
  \caption{VI vs. ConjVI - optimal control the batch reactor with stage cost~\eqref{eq:cost 2 BR} and $\eta = 0.01$. }
  \label{tab:large L constant}
  \centering
  \begin{tabular}{lcc}
    \toprule
    Algrithm     & Run-time (sec)    & Average cost (100 runs) \\
    \midrule
    VI & 7669 & 33.9     \\
    ConjVI & 55 & 73.5 \\
 	ConjVI-d & 90 & 74.0   \\
    \bottomrule
  \end{tabular}
\end{table}

\noindent\textbf{Gradient-based algorithms for solving the minimization over input.}
Let us first note that the minimization over $u$ in sample-based VI algorithms usually involves solving a difficult non-convex problem. 
This is particularly because the extension operation employed in these algorithms for approximating the value function using the sample points does not lead to a convex function in $u$ (e.g., take kernel-based approximations or neural networks). 
This is why in MDP and RL literature, it is quite common to consider a finite action space in the first place \cite{Busoniu17, Pow11}. 
Moreover, the minimization over $u$ again must be solved for each sample point in each iteration, while the application of ConjVI avoids solving this minimization in each iteration. 
In this regard, let us note that ConjVI uses a convex approximation of the value function, which allows for the application of a gradient-based algorithm for minimization over $u$ within the ConjVI algorithm. 
Indeed, in each iteration $k= 0,1,\ldots$, ConjVI solves (for deterministic dynamics)
$$J_{k+1}^{\mathrm{d}}(x) =  C_{\mathrm{s}}(x) +  \min_u \left\{ C_{\mathrm{i}}(u)+\gamma \cdot \max_{y \in \mathbb{Y}^{\mathrm{g}}} \left[ \left\langle y, f_{\mathrm{s}}(x) + Bu \right\rangle -  J^{\mathrm{d}*\mathrm{d}}_k(y) \right] \right\}, \quad x \in \mathbb{X}^{\mathrm{d}},$$
where 

$$J^{\mathrm{d}*\mathrm{d}}_k(y) = \max_{x \in \mathbb{X}^{\mathrm{d}}} \left\{ \left\langle x, y \right\rangle -  J^{\mathrm{d}}_k(x) \right\}, \quad y \in \mathbb{Y}^{\mathrm{g}},$$ 
is the discrete conjugate of the output of the previous iteration (computed using the LLT algorithm). 
Then, it is not hard to see that a subgradient of the objective of the minimization can be computed using $\mathcal{O} (Y)$ operations: 
for a given $u$, assuming we have access to the subdifferential $\partial C_{\mathrm{i}} (u)$, the subdifferential of the objective function is $\partial C_{\mathrm{i}} (u) + \gamma \cdot B^{\top} y_u$, where 
$$y_u \in \argmax_{y \in \mathbb{Y}^{\mathrm{g}}} \left\{ \left\langle y, f_{\mathrm{s}}(x) + Bu \right\rangle -  J^{\mathrm{d}*\mathrm{d}}_k(y) \right\}.$$ 
This leads to a per iteration time complexity of $\mathcal{O} (XY) = \mathcal{O} (X^2)$, which is again practically inefficient.

\appendix 

\section{Technical proofs}
\label{sec:proofs}

\subsection{Proof of Proposition~\ref{prop:CDP op conj}}
This result is an extension of \cite[Lem.~4.2]{Kolari21dCDParxiv} that accounts for the separable cost, the discount factor, and additive disturbance. 
Inserting the dynamics of Assumption~\ref{As:prob data}-\ref{As:dyn} into \eqref{eq:CDP op}, we can use the definition of conjugate transform to obtain 
(all the functions are extended to infinity outside their effective domains)
\begin{align*}
\cdpo J(x) - \costx(x) &= \max_{y} \ \min_{u, z} \left\{ \costu (u) + \gamma \cdot \EE_w J (z+w) + \inner{y}{\dynx (x) + Bu-z}  \right\} \\
&= \max_{y} \left\{ \inner{y}{\dynx(x)} - \max_{u} \left[ \inner{-B\tr y}{u} - \costu (u) \right] - \max_{ z} \left[ \inner{y}{z} -\gamma \cdot\EE_w J (z+w) \right]  \right\} \\
&= \max_{y } \left\{ \inner{y}{\dynx(x)} - \lftc{\costu}(-B\tr y) -\lftc{[\gamma \cdot\EE_w J (\cdot+w)]} (y)  \right\} \\
&= \max_{y } \left\{ \inner{y}{\dynx(x)} - \lftc{\costu}(-B\tr y) -\lftc{\epsilon} (y)  \right\} \\
&= \max_{y } \left\{ \inner{y}{\dynx(x)} - \phi (y)  \right\} \\
&= \lftc{\phi} \big( \dynx(x) \big),
\end{align*}
where we used the definition of $epsilon$ and $\phi$ in \eqref{eq:CDP op eps} and \eqref{eq:CDP op phi}, respectively. 

\subsection{Proof of Proposition~\ref{prop:CDP reform}}
We can use the representation~\eqref{eq:CDP op conj} and the definition of conjugate operation to obtain
\begin{align*}
\cdpo J(x) - \costx(x) &= \max_{y} \{ \inner{\dynx(x)}{y} - \phi(y) \} \\
& = \max_{y} \left\{ \inner{\dynx(x)}{y} - \lftc{\costu}(-B\tr y) - \lftc{\epsilon}(y) \right\} \\
& = \max_{y} \left\{ \inner{\dynx(x)}{y} - \bcc{[\lftc{\costu}]}(-B\tr y) - \lftc{\epsilon}(y) \right\} \\
& = \max_{y} \left\{ \inner{\dynx(x)}{y} - \max_{u \in \co(\setc{U})} \left[ \inner{-B\tr y}{u} - \bcc{\costu}(u) \right] - \lftc{\epsilon}(y) \right\} \\ 
& = \max_{y} \min_{u \in \co(\setc{U})} \left\{ \bcc{\costu}(u) +  \inner{y}{\dynx(x)+Bu} - \lftc{\epsilon}(y)  \right\},
\end{align*} 
where we used the fact that $\lftc{\costu}: \R^m \ra \R$ is proper, closed, and convex, and hence $\bcc{[\lftc{\costu}]} = \lftc{\costu}$. 
This follows from the fact that $\dom(\costu) = \setc{U}$ is assumed to be compact (Assumption~\ref{As:prob data}-\ref{As:constr}). 
Hence, the objective function of this maximin problem is convex in $u$, with $\co(\setc{U})$ being compact, which follows from convexity of $\bcc{\costu}: \co(\setc{U}) \ra \R$. 
Also, the objective function is concave in $y$, which follows from the convexity of $\lftc{\epsilon}$. 
Then, by Sion's Minimax Theorem (see, e.g., \cite[Thm.~3]{Simons95}), we have minimax-maximin equality, i.e.,  
\begin{align*}
\cdpo J(x) - \costx(x) & =  \min_{u} \max_{y} \left\{ \bcc{\costu}(u) +  \inner{y}{\dyn(x,u)} - \lftc{\epsilon}(y)  \right\} \nonumber \\
& = \min_{u } \left\{ \bcc{\costu}(u) + \max_{y} \big[ \inner{y}{\dyn(x,u)} - \lftc{\epsilon}(y) \big] \right\} \\ 
& = \min_{u} \left\{ \bcc{\costu}(u) + \bcc{\epsilon}\big( \dyn(x,u) \big) \right\} \\
& = \min_{u} \left\{ \bcc{\costu}(u) + \gamma \cdot \bcc{[\EE_w J (\cdot+w)]}\big( \dyn(x,u) \big) \right\},
\end{align*}
where the last equality, we used the fact that $\bcc{[\gamma h]} = \gamma \cdot \bcc{h}$; see~\cite[Prop.~13.23--(i)\&(iv)]{Bauschke17}. 

\subsection{Proof of Corollary~\ref{cor:equivalnece CDP and DP}}
By Proposition~\ref{prop:CDP reform}, we need to show $\bcc{\costu} = \costu$ and $\bcc{[\EE_w J (\cdot+w)]} = \EE_w J (\cdot+w)$ so that
\begin{align*}
\bcc{\costu}(u) + \gamma \cdot \bcc{[\EE_w J (\cdot+w)]} \big( \dyn(x,u) \big) & = \costu(u) + \gamma \cdot [\EE_w J (\cdot+w)] \big( \dyn(x,u) \big) \\
 &= \costu(u) + \gamma \cdot \EE_w J \big( \dyn(x,u) + w\big) \\
 &= \costu(u) + \gamma \cdot \EE_w J \big( \dynw(x,u,w) \big).
\end{align*}
This holds if $\costu$ and $\EE_w J (\cdot+w)$ are proper, closed and convex. 
This is indeed the case since $\setc{X}$ and $\setc{U}$ are compact, and $\costu : \setc{U} \ra \R$ and $J:\setc{X} \ra \R$ are assumed to be convex.

\subsection{Proof of Theorem~\ref{thm:convergence}}
We begin with two preliminary lemmas on the non-expansiveness of conjugate and multilinear interpolation operations within the d-CDP operation~\eqref{eq:d-CDP op}. 

\begin{Lem}[Non-expansiveness of conjugate operator] \label{lem:conj non-exp}
Consider two functions $h_{i}\ (i=1,2)$, with the same nonempty effective domain~$\setc{X}$. 
For any $y \in \dom (\lftc{h_1}) \cap \dom (\lftc{h_2})$, we have 
\begin{align*}
| \lftc{h_1}(y) - \lftc{h_2}(y)| \leq \norm{h_1-h_2}_{\infty}.
\end{align*}
\end{Lem}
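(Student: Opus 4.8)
The plan is to bound the pointwise difference of the two conjugates directly from the definition as a supremum, exploiting that both conjugates are evaluated at the same point $y$ and that $h_1$ and $h_2$ share the same effective domain $\setc{X}$. The key observation is that the conjugate is a supremum over $x$ of the \emph{linear-in-$h$} expression $\inner{y}{x} - h(x)$, and subtracting a uniformly small perturbation from $h$ can change such a supremum by at most that uniform bound.

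Concretely, I would first fix $y \in \dom(\lftc{h_1}) \cap \dom(\lftc{h_2})$ and write, for every $x \in \setc{X}$,
\begin{equation*}
\inner{y}{x} - h_1(x) = \big(\inner{y}{x} - h_2(x)\big) + \big(h_2(x) - h_1(x)\big) \leq \big(\inner{y}{x} - h_2(x)\big) + \norm{h_1 - h_2}_{\infty}.
\end{equation*}
Taking the supremum over $x \in \setc{X}$ on both sides, and noting that the additive constant $\norm{h_1-h_2}_\infty$ passes through the supremum, yields
\begin{equation*}
\lftc{h_1}(y) = \sup_{x \in \setc{X}} \big\{ \inner{y}{x} - h_1(x) \big\} \leq \sup_{x \in \setc{X}} \big\{ \inner{y}{x} - h_2(x) \big\} + \norm{h_1 - h_2}_{\infty} = \lftc{h_2}(y) + \norm{h_1 - h_2}_{\infty}.
\end{equation*}
This gives $\lftc{h_1}(y) - \lftc{h_2}(y) \leq \norm{h_1 - h_2}_\infty$. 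Swapping the roles of $h_1$ and $h_2$ gives the reverse inequality $\lftc{h_2}(y) - \lftc{h_1}(y) \leq \norm{h_1 - h_2}_\infty$, and combining the two yields the absolute-value bound claimed.

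There is no real obstacle here; the argument is the standard fact that taking a supremum is a non-expansive (1-Lipschitz) operation with respect to the infinity-norm of its argument. The only point worth a moment's care is the requirement $y \in \dom(\lftc{h_1}) \cap \dom(\lftc{h_2})$: this guarantees both suprema are finite, so the rearrangement of the inequality above is legitimate and no $\infty - \infty$ situation arises. Since $h_1$ and $h_2$ share the same domain $\setc{X}$, the supremum in both conjugates ranges over the same index set, which is exactly what lets the difference $h_2(x) - h_1(x)$ be controlled uniformly by $\norm{h_1 - h_2}_\infty$.
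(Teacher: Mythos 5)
Your proof is correct and follows essentially the same route as the paper's: insert $h_2(x) - h_1(x)$ into the supremand, bound it uniformly by $\norm{h_1 - h_2}_{\infty}$, take the supremum over the common domain $\setc{X}$, and symmetrize to get the two-sided bound. The remark about $y$ lying in both conjugate domains ensuring finiteness is a sensible point of care that the paper leaves implicit.
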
 

\begin{proof}
For any $y \in \dom (\lftc{h_1}) \cap \dom (\lftc{h_2})$, we have
\begin{align*}
\lftc{h_1}(y) = \max_{x \in \setc{X}} \inner{x}{y} - h_1(x) = \max_{x \in \setc{X}} \inner{x}{y} - h_2(x) + h_2(x) - h_1(x).
\end{align*}
Hence,
\begin{align*}
\lftc{h_2}(y) - \norm{h_1-h_2}_{\infty} \leq \lftc{h_1}(y) \leq \lftc{h_2}(y) + \norm{h_1-h_2}_{\infty},
\end{align*}
that is, 
\begin{align*}
| \lftc{h_1}(y) - \lftc{h_2}(y)| \leq \norm{h_1-h_2}_{\infty}.
\end{align*}
\end{proof}

\begin{Lem}[Non-expansiveness of interpolative LERP operator] \label{lem:lerp non-exp}
Consider two discrete functions $\disc{h}_{i}\ (i=1,2)$ with the same grid-like domain~$\setg{X} \subset \R^n$, and their \emph{interpolative} LERP extensions $\llerp{\disc{h}_{i}}:\co(\setg{X})\ra\R$. 
We have 
\begin{align*}
\norm{\llerp{\disc{h}_{1}}-\llerp{\disc{h}_{2}}}_{\infty} \leq \norm{\disc{h}_1-\disc{h}_2}_{\infty}.
\end{align*}
\end{Lem}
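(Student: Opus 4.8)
The plan is to exploit the defining structure of multilinear interpolation: at every point the interpolated value is a \emph{convex combination} of the surrounding grid values, with weights that depend only on the location of the point and not on the function being interpolated.

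First I would fix an arbitrary $x \in \co(\setg{X})$ and identify the grid cell (a hyperrectangle of $\setg{X}$) containing it, with vertex set $V(x) \subset \setg{X}$. I claim there exist weights $\lambda_v(x) \geq 0$ for $v \in V(x)$, with $\sum_{v \in V(x)} \lambda_v(x) = 1$, such that
\begin{equation*}
\llerp{\disc{h}}(x) = \sum_{v \in V(x)} \lambda_v(x)\, \disc{h}(v)
\end{equation*}
for \emph{any} discrete function $\disc{h}$ on $\setg{X}$. I would justify this from the tensor-product form of LERP: writing the normalized position of $x$ within its cell as $(t_1,\ldots,t_n) \in [0,1]^n$, each vertex corresponds to a binary vector $b \in \{0,1\}^n$ and carries the weight $\lambda_v(x) = \prod_{i=1}^n t_i^{b_i}(1-t_i)^{1-b_i}$. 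These weights are manifestly nonnegative and satisfy $\sum_{b \in \{0,1\}^n} \prod_{i=1}^n t_i^{b_i}(1-t_i)^{1-b_i} = \prod_{i=1}^n \big( t_i + (1-t_i) \big) = 1$. The crucial point is that the $\lambda_v(x)$ are the \emph{same} for $\disc{h}_1$ and $\disc{h}_2$, since they are determined solely by $x$.

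With this representation, the estimate is immediate: for every $x \in \co(\setg{X})$,
\begin{equation*}
\big| \llerp{\disc{h}_1}(x) - \llerp{\disc{h}_2}(x) \big| = \Big| \sum_{v \in V(x)} \lambda_v(x) \big( \disc{h}_1(v) - \disc{h}_2(v) \big) \Big| \leq \sum_{v \in V(x)} \lambda_v(x)\, \norm{\disc{h}_1 - \disc{h}_2}_\infty = \norm{\disc{h}_1 - \disc{h}_2}_\infty,
\end{equation*}
using the triangle inequality, the uniform bound $|\disc{h}_1(v) - \disc{h}_2(v)| \leq \norm{\disc{h}_1 - \disc{h}_2}_\infty$, and $\sum_{v \in V(x)} \lambda_v(x) = 1$. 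Taking the supremum over $x \in \co(\setg{X})$ yields the claim.

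The only genuinely delicate step is the first one, namely establishing the convex-combination representation with weights independent of the function values; this is where the averaging (hence non-expansive) nature of LERP is encoded. Once the nonnegativity of the weights and their summation to one are in hand, the remainder is a single application of the triangle inequality. (Note that it is precisely the restriction to \emph{interpolation} --- $x \in \co(\setg{X})$ --- that keeps all $t_i \in [0,1]$ and hence the weights nonnegative; extrapolation would allow $t_i \notin [0,1]$, destroying nonnegativity and, with it, non-expansiveness, consistent with the remark on extrapolation in Section~\ref{subsec:numerical ex 3}.)
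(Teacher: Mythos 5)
Your proof is correct and follows essentially the same route as the paper's: both express $\llerp{\disc{h}_i}(x)$ as a convex combination of the values at the $2^n$ vertices of the containing cell, with weights depending only on $x$, and conclude by the triangle inequality. The only difference is that you additionally derive the tensor-product form of the weights and verify explicitly that they are nonnegative and sum to one, which the paper simply asserts.
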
 

\begin{proof}
For any $x \in \co(\setg{X})$, we have ($i = 1,2$)
\begin{align*}
\llerp{\disc{h}_{i}} (x) = \sum_{j=1}^{2^n} \alpha^j \ \disc{h}_{i} (x^j),
\end{align*}
where $x^j, \ j=1, \ldots, 2^n$, are the vertices of the hyper-rectangular cell that contains $x$, and $\alpha^j, \ j=1, \ldots, 2^n$, are convex coefficients (i.e., $\alpha^j \in [0,1]$ and $\sum_j \alpha^j = 1$). 
Then  
\begin{align*}
\left|\llerp{\disc{h}_{1}} (x) - \llerp{\disc{h}_{2}} (x)\right| \leq \sum_{j=1}^{2^n} \alpha^j \ \left|\disc{h}_{1} (x^j) - \disc{h}_{2} (x^j)\right| \leq \norm{\disc{h}_1-\disc{h}_2}_{\infty}.
\end{align*}
\end{proof}

With these preliminary results at hand, we can now show that $\dcdpo$ is $\gamma$-contractive. 
Consider two discrete functions $\disc{J}_{i}:\setd{X}\ra \R \ (i=1,2)$. 
For any $x \in \setd{X} \subset \R^n$, we have
\begin{align*}
\left| \dcdpo \disc{J}_1 (x)- \dcdpo \disc{J}_2 (x) \right| &\overset{\eqref{eq:d-CDP op TV}}{=} \left| \llerp{\lftdd{\varphi}_1} \big( \dynx(x) \big) - \llerp{\lftdd{\varphi}_2} \big( \dynx(x) \big) \right| 
\overset{\text{Lem.}~\ref{lem:lerp non-exp}}{\leq} \norm{\lftdd{\varphi}_1 - \lftdd{\varphi}_2}_{\infty} \\ 
&\overset{\text{Def.}}{\leq} \norm{\lftd{\varphi}_1 - \lftd{\varphi}_2}_{\infty} 
\overset{\text{Lem.}~\ref{lem:conj non-exp}}{\leq} \norm{\disc{\varphi}_1 - \disc{\varphi}_2}_{\infty} 
\overset{\eqref{eq:d-CDP op phi}}{\leq} \norm{\lftdd{\varepsilon_1} - \lftdd{\varepsilon_2}}_{\infty} \\
&\overset{\text{Def.}}{\leq}  \norm{\lftd{\varepsilon_1} - \lftd{\varepsilon_2}}_{\infty} 
\overset{\text{Lem.}~\ref{lem:conj non-exp}}{\leq} \norm{\disc{\varepsilon_1} - \disc{\varepsilon_2}}_{\infty} \\
&\overset{\eqref{eq:d-CDP op eps} }{=} \gamma \cdot \norm{\sum_{w \in \setd{W}} p(w) \cdot \left(\lerp{\disc{J}_1} (x+w) - \lerp{\disc{J}_2} (x+w) \right)}_{\infty} \\
&\leq \gamma \cdot \norm{\lerp{\disc{J}_1} - \lerp{\disc{J}_2}}_{\infty} \leq \gamma \cdot \norm{\disc{J_1} - \disc{J_2}}_{\infty}.
\end{align*}
We note that we are using: (i)~Assumption~\ref{As:grids}-\ref{As:grid Z} in the application of Lemma~\ref{lem:lerp non-exp}, (ii)~the fact that $\dom (\lftd{\varphi}_{i}) = \dom (\lftd{\varepsilon_i}) = \R^n$ for $i=1,2$ in the two applications of Lemma~\ref{lem:conj non-exp}, and (iii)~Assumption~\ref{As:extension op}-\ref{As:ext nonexp} in the last inequality.

\subsection{Proof of Theorem~\ref{thm:complexity}}
In what follows, we provide the time complexity of each line of Algorithm~\ref{alg:d-CDP separ cost}. 
In particular, we use the fact that $Y,Z = X$ and $V= U$ by Assumption~\ref{As:grids}-\ref{As:grid complexity}. 
The complexity of construction of $\setg{V}$ in line~\ref{line_alg2:const V} is of $\ord (X+U)$ by Assumption~\ref{As:grids}-\ref{As:grid complexity}. 
The LLT of line~\ref{line_alg2:LLT of Ci} requires $\ord (U+V) = \ord(U)$ operations~\cite[Cor. 5]{Lucet97}. 
The complexity of lines~\ref{line_alg2:const Z} and \ref{line_alg2:const Y} is of $\ord (X+U)$ by Assumption~\ref{As:grids}-\ref{As:grid complexity} on the complexity of construction of $\setg{Z}$ and $\setg{Y}$.
The operation of line~\ref{line_alg2:init 1} also has a complexity of $\ord (X)$, and line~\ref{line_alg2:init 2} requires $\ord(X+U)$ operations. 
This leads to the reported $\ord (X+U)$ time complexity for initialization. 
 
In each iteration, lines \ref{line_alg2:iter 1} requires $\ord(X)$ operations. 
The complexity of line~\ref{line_alg2:LERP of V} is of $\ord(XWE)$ by the assumption on the complexity of the extension operator~$\lerp{[\cdot]}$. 
The LLT of line~\ref{line_alg2:LLT of V} requires $\ord (X+Y) = \ord(X)$ operations~\cite[Cor. 5]{Lucet97}. 
The application of LERP in line~\ref{line_alg2:LERP of Ci} has a complexity of $\ord (\log V)$~\cite[Rem.~2.2]{Kolari21dCDParxiv}. 
Hence, the \texttt{for loop} over~$y \in \setg{Y}$ requires $\ord (Y \log V) = \ord (X \log U) = \wt{\ord}(X)$ operations.
The LLT of line~\ref{line_alg2:LLT of phi} requires $\ord (Z+Y) = \ord(X)$ operations \cite[Cor. 5]{Lucet97}. 
The application of LERP in line~\ref{line_alg2:LERP of phi} has a complexity of $\ord (\log Z)$~\cite[Rem.~2.2]{Kolari21dCDParxiv}. 
Hence, the \texttt{for loop} over~$x \in \setd{X}$ requires $\ord (X \log Z) = \ord (X \log X) = \wt{\ord}(X)$ operations. 
The time complexity of each iteration is then of $\wt{\ord}(XWE)$.

\subsection{Proof of Theorem~\ref{thm:error}}
Note that the ConjVI Algorithm~\ref{alg:d-CDP separ cost} involves consecutive applications of the d-CDP operator~$\dcdpo$~\eqref{eq:d-CDP op}, 
and terminates after a finite number of iterations corresponding to the bound~$e_\mathrm{t}$. 
We begin with bounding the difference between the DP and d-CDP operators. 
We note that this result extends \cite[Thm.~5.3]{Kolari21dCDParxiv} by considering the error of extension operation for computing the expectation w.r.t. to the additive disturbance in~\eqref{eq:d-CDP op eps} and the approximate discrete conjugation of the input cost in~\eqref{eq:d-CDP op phi}.  

\begin{Prop}[Error of d-CDP operation] \label{prop:error d-CDP}
Let $J:\setc{X} \ra \R$ be a Lipschitz continuous, convex function that satisfies the condition of Assumption~\ref{As:extension op}-\ref{As:ext error}. 
Assume $\costu:\setc{U}\ra \R$ is convex. 
Also, let Assumptions~\ref{As:grids}-\ref{As:grid V}\&\ref{As:grid Z} hold.  
Consider the output of the d-CDP operator~$\dcdpo \disc{J}: \setd{X} \ra \R$ and the discretization of the output of the DP operator $\disc{[\dpo J]}: \setd{X} \ra \R$. 
We have
\begin{equation}\label{eq:error d-CDP}
\norm{\dcdpo \disc{J} - \disc{[\dpo J]}}_{\infty} \leq  \gamma \cdot e_\mathrm{e} + e_\mathrm{d}.
\end{equation}
\end{Prop}

\begin{proof}

First note that, by Corollary~\ref{cor:equivalnece CDP and DP}, the DP and CDP operators are equivalent, i.e., $\dpo J = \cdpo J$. 
Hence, it suffices to bound the error of the d-CDP operator~$\dcdpo$ w.r.t. the CDP operator~$\cdpo$. 
We begin with the following preliminary lemma. 

\begin{Lem}\label{lem:eps cont}
The scaled expectation $\epsilon$ in~\eqref{eq:CDP op eps} is Lipschitz continuous and convex with a nonempty, compact effective domain. 
Moreover, $\lip(\epsilon) \leq \gamma\cdot\lip(J)$.
\end{Lem}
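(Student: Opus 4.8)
The claim has three parts: that $\epsilon(x) = \gamma \cdot \EE_w J(x+w) = \gamma \sum_{w \in \setd{W}} p(w) J(x+w)$ is (i) convex, (ii) Lipschitz continuous with the explicit bound $\lip(\epsilon) \leq \gamma \cdot \lip(J)$, and (iii) has a nonempty compact effective domain. The plan is to treat each property by exploiting the fact that $\epsilon$ is a finite, nonnegatively-weighted sum of shifted copies of $\gamma J$, and that $J$ is assumed Lipschitz, convex, with domain $\setc{X}$ compact (by Assumption~\ref{As:prob data}-\ref{As:constr}).

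First I would address convexity. Each map $x \mapsto J(x+w)$ is convex, being the composition of the convex function $J$ with the affine (translation) map $x \mapsto x+w$; convexity of $J$ is part of the standing hypothesis. Since $p(w) \geq 0$ for all $w \in \setd{W}$ and $\gamma > 0$, the function $\epsilon = \gamma \sum_{w} p(w)\, J(\cdot + w)$ is a nonnegative combination of convex functions, hence convex. Second, for the Lipschitz bound I would estimate directly: for any $x, \tilde{x}$ in the domain,
\begin{align*}
|\epsilon(x) - \epsilon(\tilde x)|
&\leq \gamma \sum_{w \in \setd{W}} p(w)\, |J(x+w) - J(\tilde x + w)| \\
&\leq \gamma \sum_{w \in \setd{W}} p(w)\, \lip(J)\, \norm{(x+w)-(\tilde x + w)}_2
= \gamma \cdot \lip(J)\, \norm{x - \tilde x}_2,
\end{align*}
using the triangle inequality, the Lipschitz property of $J$, the cancellation of the shift $w$, and $\sum_w p(w) = 1$. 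Dividing by $\norm{x-\tilde x}_2$ and taking the supremum yields $\lip(\epsilon) \leq \gamma \cdot \lip(J)$.

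For the domain I would argue that $\dom(\epsilon) = \cap_{w \in \setd{W}} (\setc{X} - w)$, since $\epsilon(x)$ is finite exactly when $x + w \in \dom(J) = \setc{X}$ for every $w \in \setd{W}$ (recall $J$ is extended to $+\infty$ outside $\setc{X}$). This is a finite intersection of translates of the compact set $\setc{X}$, hence compact. Nonemptiness is the one point needing care: it follows because Assumption~\ref{As:prob data}-\ref{As:constr} guarantees the admissible input set $\setc{U}(x) = \{u \in \setc{U} : \dynw(x,u,w) \in \setc{X}, \forall w \in \setd{W}\}$ is nonempty for some (indeed every) $x \in \setc{X}$, which precisely certifies the existence of a point whose shifts by all $w$ land in $\setc{X}$. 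The main obstacle is this nonemptiness/domain bookkeeping rather than any estimate: one must be careful that the extension-to-infinity convention makes $\dom(\epsilon)$ the intersection (not the union) of the shifted domains, so that the compactness argument goes through and the set is certified nonempty via the admissibility assumption.
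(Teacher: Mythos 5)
Your proposal is correct and follows essentially the same route as the paper's (much terser) proof: convexity via nonnegative combinations of shifted copies of the convex $J$, the Lipschitz bound by the direct weighted triangle-inequality estimate using $\sum_w p(w)=1$, and nonemptiness/compactness of $\dom(\epsilon)=\cap_{w\in\setd{W}}(\setc{X}-w)$ from the feasibility condition of Assumption~\ref{As:prob data}-\ref{As:constr} and compactness of $\setc{X}$. You have merely filled in the details the paper leaves implicit, including the useful observation that the effective domain is an intersection (not a union) of translates.
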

\begin{proof}
The convexity follows from the fact that expectation preserves convexity and $\gamma >0$. 
The effective domain of $\epsilon$ is nonempty by the feasibility condition of Assumption~\ref{As:prob data}-\ref{As:constr}, and is compact since $\setc{X}$ is assumed to be compact. 
Finally, the bound on the Lipschitz constant of $\epsilon$ immediately follows from~\eqref{eq:CDP op eps}. 
\end{proof} 

We now provide our step-by-step proof. Consider the function $\epsilon$ in~\eqref{eq:CDP op eps} and its discretization $\disc{\epsilon}:\setd{X} \ra \Ru$. 
Also, consider the discrete function $\disc{\varepsilon}:\setd{X} \ra \Ru$ in~\eqref{eq:d-CDP op eps}. 

\begin{Lem}\label{lem:eps error}
We have $\dom(\disc{\epsilon}) = \dom(\disc{\varepsilon}) \neq \emptyset $. Moreover, 
$ \norm{\disc{\epsilon} - \disc{\varepsilon}}_{\infty} \leq \gamma \cdot e_\mathrm{e} $.
\end{Lem}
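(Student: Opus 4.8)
The plan is to handle the statement in two independent parts: the identification and nonemptiness of the effective domains, and then the uniform bound, which is the short mechanical part. Starting from the definitions \eqref{eq:CDP op eps} and \eqref{eq:d-CDP op eps}, both $\disc{\epsilon}$ and $\disc{\varepsilon}$ have the form $\gamma \sum_{w \in \setd{W}} p(w)\cdot g(x+w)$ evaluated at $x \in \setd{X}$, with $g = J$ in the first case and $g = \lerp{\disc{J}}$ in the second. Since $\gamma > 0$ and $p(w) > 0$ on the finite support $\setd{W}$ (Assumption~\ref{As:distr}), such a finite sum is finite precisely when every summand is finite, i.e. when $x+w \in \dom(g)$ for all $w \in \setd{W}$. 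The crucial observation is that $J$ and its extension $\lerp{\disc{J}}$ share the same effective domain (both are real-valued on $\setc{X}$ and $+\infty$ outside), so the finiteness conditions for $\disc{\epsilon}$ and $\disc{\varepsilon}$ coincide, giving $\dom(\disc{\epsilon}) = \dom(\disc{\varepsilon}) = \{x \in \setd{X} : x+w \in \setc{X},\ \forall w \in \setd{W}\}$.

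For the nonemptiness of this common set I would argue exactly as in Lemma~\ref{lem:eps cont}, where $\dom(\epsilon) \neq \emptyset$ was deduced from the continuous feasibility condition of Assumption~\ref{As:prob data}-\ref{As:constr}; here the discrete counterpart, Assumption~\ref{As:feasible discrete}, plays the same role and guarantees that the admissible region underlying $\epsilon$ meets the discretization~$\setd{X}$. I expect this to be the only delicate step, since it is precisely where the discretization must be taken compatible with the state constraints (and where matching the effective domain of the extension $\lerp{\disc{J}}$ to that of $J$ matters); once the common domain is pinned down, the rest is purely computational.

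For the bound, I would restrict to this common domain and estimate pointwise, so that for every admissible $x$,
\begin{align*}
|\disc{\epsilon}(x) - \disc{\varepsilon}(x)|
&= \gamma \Bigl| \sum_{w \in \setd{W}} p(w)\bigl( J(x+w) - \lerp{\disc{J}}(x+w)\bigr) \Bigr| \\
&\leq \gamma \sum_{w \in \setd{W}} p(w)\, \bigl| J(x+w) - \lerp{\disc{J}}(x+w)\bigr| \\
&\leq \gamma \sum_{w \in \setd{W}} p(w)\, \norm{J - \lerp{\disc{J}}}_{\infty} \leq \gamma\, e_\mathrm{e},
\end{align*}
where the first inequality is the triangle inequality, the second invokes the uniform extension error of Assumption~\ref{As:extension op}-\ref{As:ext error}, and the last uses $\sum_{w \in \setd{W}} p(w) = 1$. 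Taking the supremum over $x$ in the common domain yields $\norm{\disc{\epsilon} - \disc{\varepsilon}}_{\infty} \leq \gamma\, e_\mathrm{e}$, which completes the argument.
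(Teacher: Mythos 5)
Your proposal is correct and follows essentially the same route as the paper: the domain identity and nonemptiness are attributed to the discrete feasibility condition of Assumption~\ref{As:feasible discrete} (together with the fact that $J$ and $\lerp{\disc{J}}$ share the effective domain $\setc{X}$), and the bound is obtained by exactly the same pointwise triangle-inequality estimate combined with Assumption~\ref{As:extension op}-\ref{As:ext error} and $\sum_{w \in \setd{W}} p(w) = 1$. Your explicit description of the common domain as $\{x \in \setd{X} : x+w \in \setc{X},\ \forall w \in \setd{W}\}$ is a slightly more detailed unpacking than the paper offers, but the argument is the same.
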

\begin{proof}
The first statement follows from the feasibility condition of Assumption~\ref{As:feasible discrete}. 
For the second statement, note that for every $x \in \dom(\disc{\epsilon}) = \dom(\disc{\varepsilon})$, we can use \eqref{eq:CDP op eps} and \eqref{eq:d-CDP op eps} to write
\begin{align*}
\left| \disc{\epsilon}(x) - \disc{\varepsilon}(x) \right| &= \gamma \cdot \left| \sum_{w \in \setd{W}} p(w) \cdot \big( J(x+w) -  \lerp{\disc{J}}(x+w) \big) \right| \\
& \leq \gamma \cdot \sum_{w \in \setd{W}} p(w) \cdot \left| J(x+w) - \lerp{\disc{J}}(x+w) \right| \\
& \leq \gamma \cdot \norm{J - \lerp{\disc{J}}}_{\infty}.
\end{align*}
The result then follows from Assumption~\ref{As:extension op}-\ref{As:ext error} on $J$.
\end{proof} 

Now, consider the function $\phi:\R^n \ra \R$ in~\eqref{eq:CDP op phi} and its discretization $\disc{\phi}:\setg{Y} \ra \R$. 
Also, consider the discrete function $\disc{\varphi}:\setg{Y} \ra \R$ in~\eqref{eq:d-CDP op phi}.

\begin{Lem}\label{lem:phi error}
We have $ \norm{\disc{\phi} - \disc{\varphi}}_{\infty} \leq \gamma \cdot e_\mathrm{e} +  e_\mathrm{u} +  e_\mathrm{v} +  e_\mathrm{x}$, where
\begin{align*}
e_\mathrm{u} &= \left[\norm{B}_2 \cdot \diam{\setg{Y}} + \lip (\costu)  \right] \cdot  \dish (\setc{U}, \setd{U}),  \\
e_\mathrm{v} &= \diam{\setd{U}} \cdot \dish \big(\co( \setg{V}), \setg{V}\big), \\
e_\mathrm{x}  &= \left[ \diam{\setg{Y}} + \gamma \cdot \lip(J) \right] \cdot  \dish (\setc{X}, \setd{X}).
\end{align*}
\end{Lem}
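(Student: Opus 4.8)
The plan is to bound $|\disc{\phi}(y) - \disc{\varphi}(y)|$ uniformly over $y \in \setg{Y}$ by splitting it, through the defining formulas~\eqref{eq:CDP op phi} and \eqref{eq:d-CDP op phi}, into an input-cost part and a value-function part:
\[
\disc{\phi}(y) - \disc{\varphi}(y) = \big[\lftc{\costu}(-B\tr y) - \llerp{\lftdd{\costu}}(-B\tr y)\big] + \big[\lftc{\epsilon}(y) - \lftdd{\varepsilon}(y)\big].
\]
I would handle each bracket by inserting the intermediate \emph{finite-domain} conjugate of the \emph{exact} function, namely $\lftd{\costu}(v) = \max_{u\in\setd{U}}\{\inner{u}{v}-\costu(u)\}$ and $\lftd{\epsilon}(y) = \max_{x\in\setd{X}}\{\inner{x}{y}-\epsilon(x)\}$, so that each bracket becomes a sum of a \emph{domain-discretization} error (continuous versus finite-domain conjugate) and a \emph{numerical} error (finite-domain conjugate of the exact function versus the quantity actually computed in Algorithm~\ref{alg:d-CDP separ cost}). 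These produce the terms $e_\mathrm{x},\gamma e_\mathrm{e}$ and $e_\mathrm{u},e_\mathrm{v}$, respectively.

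For the value-function bracket, I first bound $0 \le \lftc{\epsilon}(y) - \lftd{\epsilon}(y)$: taking a maximizer $x^\star$ of $\lftc{\epsilon}(y)$ (which exists by the compactness in Lemma~\ref{lem:eps cont}) and a nearest point $\hat{x}\in\setd{X}$ with $\norm{x^\star-\hat x}_2 \le \dish(\setc{X},\setd{X})$, and using Lipschitz continuity of $\epsilon$ with $\lip(\epsilon)\le\gamma\cdot\lip(J)$ from Lemma~\ref{lem:eps cont}, yields $\lftc{\epsilon}(y)-\lftd{\epsilon}(y) \le \dish(\setc{X},\setd{X})\,(\norm{y}_2 + \gamma\lip(J))$, which I bound by $e_\mathrm{x}$ after noting $\norm{y}_2\le\diam{\setg{Y}}$ for every $y\in\setg{Y}$ (valid because the dual grid is built symmetrically about the origin, so $0\in\co(\setg{Y})$; cf.\ the construction~\eqref{eq:condition}). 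The remaining piece $\lftd{\epsilon}(y)-\lftdd{\varepsilon}(y)$ is a difference of two conjugates over the \emph{common} finite domain $\setd{X}$: by Lemma~\ref{lem:eps error} the domains of $\disc{\epsilon}$ and $\disc{\varepsilon}$ coincide and are nonempty, so Lemma~\ref{lem:conj non-exp} applies and bounds it by $\norm{\disc{\epsilon}-\disc{\varepsilon}}_\infty \le \gamma\, e_\mathrm{e}$, again via Lemma~\ref{lem:eps error}.

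For the input-cost bracket I proceed analogously at the point $v = -B\tr y$. The domain-discretization error $0\le\lftc{\costu}(v)-\lftd{\costu}(v)$ is bounded, via Lipschitz continuity of $\costu$ and a nearest point in $\setd{U}$, by $\dish(\setc{U},\setd{U})\,(\norm{v}_2+\lip(\costu))$; since $\norm{v}_2=\norm{B\tr y}_2\le\norm{B}_2\,\diam{\setg{Y}}$, this gives $e_\mathrm{u}$. The numerical error $\lftd{\costu}(v)-\llerp{\lftdd{\costu}}(v)$ is the LERP error incurred when the convex, piecewise-affine function $\lftd{\costu}$ (whose subgradients all lie in $\co(\setd{U})$, so whose slope varies by at most $\diam{\setd{U}}$) is replaced by the multilinear interpolant of its grid samples $\lftdd{\costu}$. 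I would bound it by the slope variation times the grid resolution, i.e.\ $\diam{\setd{U}}\cdot\dish(\co(\setg{V}),\setg{V}) = e_\mathrm{v}$, invoking Assumption~\ref{As:grids}-\ref{As:grid V} ($\co(\setsg{V})\supseteq\setc{L}(\disc{\costu})$) to ensure that outside the slope range $\lftd{\costu}$ is already affine, so that LERP (including any extrapolation at points $v\notin\co(\setg{V})$) is exact there and the error accrues only over the covered slope region.

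Collecting the four bounds (the two domain-discretization terms are nonnegative and bounded by $e_\mathrm{x}$ and $e_\mathrm{u}$; the LERP term lies in $[-e_\mathrm{v},0]$ by convexity; the discrete-conjugate difference lies in $[-\gamma e_\mathrm{e},\gamma e_\mathrm{e}]$) gives $\disc{\phi}(y)-\disc{\varphi}(y)\in[-e_\mathrm{v}-\gamma e_\mathrm{e},\,e_\mathrm{u}+e_\mathrm{x}+\gamma e_\mathrm{e}]$, hence $|\disc{\phi}(y)-\disc{\varphi}(y)|\le \gamma e_\mathrm{e}+e_\mathrm{u}+e_\mathrm{v}+e_\mathrm{x}$ uniformly in $y$, as claimed. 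The main obstacle I anticipate is the LERP term $e_\mathrm{v}$: pinning down the correct slope-variation constant ($\diam{\setd{U}}$ rather than the cruder $\sup_{u\in\setd{U}}\norm{u}_2$) and rigorously justifying that the interpolation/extrapolation of $\lftdd{\costu}$ is exact outside $\co(\setsg{V})$, which is precisely where Assumption~\ref{As:grids}-\ref{As:grid V} enters.
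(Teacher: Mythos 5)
Your proposal is correct and follows essentially the same route as the paper's proof: the identical four-way decomposition via the intermediate finite-domain conjugates $\lftd{\costu}$ and $\lftd{\epsilon}$, with the same attribution of $e_\mathrm{u}, e_\mathrm{x}$ to domain-discretization of the exact conjugates, $e_\mathrm{v}$ to the LERP approximation of $\lftdd{\costu}$ (via Assumption~\ref{As:grids}-\ref{As:grid V}), and $\gamma e_\mathrm{e}$ to non-expansiveness of discrete conjugation combined with Lemma~\ref{lem:eps error}. The only difference is that the paper imports the two bounds you re-derive by hand as \cite[Lem.~2.5]{Kolari21dCDParxiv} and \cite[Cor.~2.7]{Kolari21dCDParxiv}, so your "main obstacle" is exactly the content of the cited corollary.
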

\begin{proof}
Let $y \in \setg{Y}$. According to~\eqref{eq:CDP op phi} and ~\eqref{eq:d-CDP op phi}, we have (note that $\lftdd{\varepsilon}(y) = \lftd{\varepsilon}(y)$)
\begin{align}
\disc{\phi}(y) - \disc{\varphi}(y)  &= \phi(y) - \varphi(y) 
= \lftc{\costu}(-B\tr y) - \llerp{\lftdd{\costu}} (-B\tr y) + \lftc{\epsilon}(y) - \lftd{\varepsilon}(y).\label{eq:Z1}
\end{align}
First, let us use~\cite[Lem.~2.5]{Kolari21dCDParxiv} to write
\begin{align}
0 \leq \lftc{\costu}(-B\tr y) - \lftd{\costu}(-B\tr y) &\leq \big[ \Vert-B\tr y\Vert_2 + \lip (\costu) \big] \cdot \dish (\setc{U},\setd{U}) \nonumber\\
&\leq \left[\norm{B}_2 \cdot \diam{\setg{Y}} + \lip (\costu)  \right] \cdot  \dish (\setc{U}, \setd{U}) = e_\mathrm{u}. \label{eq:Z1-1}
\end{align}
Also, Assumption~\ref{As:grids}-\ref{As:grid V} allows to use~\cite[Cor.~2.7]{Kolari21dCDParxiv} and write
\begin{align}
0 \leq \llerp{\lftdd{\costu}}(-B\tr y) - \lftd{\costu} (-B\tr y) &\leq \diam{\setd{U}} \cdot \dish \big(\co( \setg{V}), \setg{V}\big) = e_\mathrm{v}. \label{eq:Z1-2}
\end{align}
Now, by Lemma~\ref{lem:conj non-exp} (non-expansiveness of conjugation) and Lemma~\ref{lem:eps error}, we have
\begin{align}
\left| \lftd{\epsilon}(y) - \lftd{\varepsilon}(y) \right| \leq  \norm{\disc{\epsilon} - \disc{\varepsilon}}_{\infty} \leq \gamma \cdot e_\mathrm{e}. \label{eq:Z2}
\end{align}
Moreover, we can use~\cite[Lem.~2.5]{Kolari21dCDParxiv} and Lemma~\ref{lem:eps cont} to obtain
\begin{align}
0 \leq \lftc{\epsilon}(y) - \lftd{\epsilon}(y) &\leq \big[ \norm{y}_2 + \lip (\epsilon) \big] \cdot \dish (\setc{X},\setd{X}) \nonumber\\
&\leq \left[ \diam{\setg{Y}} + \gamma \cdot \lip(J) \right] \cdot  \dish (\setc{X}, \setd{X}) = e_\mathrm{x}. \label{eq:Z3}
\end{align}
Combining \eqref{eq:Z1}-\eqref{eq:Z3}, we then have
\begin{align*}
\left| \disc{\phi}(y) - \disc{\varphi}(y) \right| &= \left| \lftc{\costu}(-B\tr y) - \llerp{\lftdd{\costu}} (-B\tr y) + \lftc{\epsilon}(y) - \lftd{\varepsilon}(y) \right| \\
&\leq \left| \lftc{\costu}(-B\tr y) - \lftd{\costu}(-B\tr y) \right| + \left| \lftd{\costu} (-B\tr y) - \llerp{\lftdd{\costu}}(-B\tr y) \right| \\
& \hspace{0.5cm} + \left| \lftc{\epsilon}(y) - \lftd{\epsilon}(y) \right| + \left| \lftd{\epsilon}(y) - \lftd{\varepsilon}(y) \right| \\
& \leq e_\mathrm{u} +  e_\mathrm{v} +  \gamma \cdot e_\mathrm{e} +    e_\mathrm{x}.
\end{align*}
\end{proof}

Next, consider the discrete composite functions $\disc{[\lftc{\phi} \circ \dynx]}: \setd{X} \ra \R $ and $\disc{[\lftd{\varphi} \circ \dynx]}: \setd{X} \ra \R $. 
In particular, notice that $\lftc{\phi} \circ \dynx$ appears in~\eqref{eq:CDP op TV}. 

\begin{Lem}\label{lem:phi error 2}
We have $ \norm{\disc{[\lftc{\phi} \circ \dynx]} - \disc{[\lftd{\varphi} \circ \dynx]}}_{\infty} \leq \gamma \cdot e_\mathrm{e} +  e_\mathrm{u} +  e_\mathrm{v} +  e_\mathrm{x} + e_\mathrm{y}$, where
$$
e_\mathrm{y}  = \big[ \diam{\dynx ( \setd{X} )} + \diam{\setc{X}} + \norm{B}_2 \cdot \diam{\setc{U}} \big] \cdot \max_{x \in \setd{X}} \dist\big(\partial  (\dpo J-\costx) (x),\setg{Y}\big).
$$
\end{Lem}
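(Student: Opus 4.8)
The plan is to bound the error pointwise at each $x \in \setd{X}$, inserting as an intermediate object the \emph{exact} discrete conjugate of the true $\phi$ over the grid $\setg{Y}$. Since $J$ is assumed Lipschitz continuous and convex, Corollary~\ref{cor:equivalnece CDP and DP} gives $\dpo J = \cdpo J$, so by the representation of Proposition~\ref{prop:CDP op conj} the first composite function satisfies $\lftc{\phi}\big(\dynx(x)\big) = (\dpo J - \costx)(x)$ exactly. Fixing $x \in \setd{X}$ and writing $z = \dynx(x)$, I would introduce the discrete conjugate of the \emph{true} $\phi$ over the grid, $\lftd{\phi}(z) = \max_{y \in \setg{Y}} \{ \inner{y}{z} - \phi(y) \}$, and split
\begin{align*}
\big| \lftc{\phi}(z) - \lftd{\varphi}(z) \big| \leq \big| \lftc{\phi}(z) - \lftd{\phi}(z) \big| + \big| \lftd{\phi}(z) - \lftd{\varphi}(z) \big|.
\end{align*}

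The second term is immediate: since $\disc{\phi}$ and $\disc{\varphi}$ share the same domain $\setg{Y}$, Lemma~\ref{lem:conj non-exp} (applied to the discrete conjugation) gives $|\lftd{\phi}(z) - \lftd{\varphi}(z)| \leq \norm{\disc{\phi} - \disc{\varphi}}_{\infty}$, which Lemma~\ref{lem:phi error} bounds by $\gamma \cdot e_\mathrm{e} + e_\mathrm{u} + e_\mathrm{v} + e_\mathrm{x}$. This already yields four of the five terms, so it remains only to bound the discrete-conjugation error $\lftc{\phi}(z) - \lftd{\phi}(z)$ by $e_\mathrm{y}$.

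For this first term I would mimic \cite[Lem.~2.5]{Kolari21dCDParxiv}. Since $\setg{Y} \subset \R^n$, the error is nonnegative. For the reverse bound, let $y\opt$ be a maximizer of $\inner{y}{z} - \phi(y)$; as $\lftc{\phi}$ is convex, its maximizer set equals $\partial \lftc{\phi}(z)$, and I am free to choose $y\opt$ nearest to $\setg{Y}$. Comparing with the grid point $\hat{y} \in \setg{Y}$ closest to $y\opt$,
\begin{align*}
\lftc{\phi}(z) - \lftd{\phi}(z) \leq \inner{y\opt - \hat{y}}{z} + \big( \phi(\hat{y}) - \phi(y\opt) \big) \leq \big( \norm{z}_2 + \lip(\phi) \big) \cdot \dist\big( \partial \lftc{\phi}(z), \setg{Y} \big).
\end{align*}
The bracketed coefficient is then estimated through $\norm{z}_2 = \norm{\dynx(x)}_2 \leq \diam{\dynx(\setd{X})}$ and the decomposition $\phi = \lftc{\costu}(-B\tr \cdot) + \lftc{\epsilon}(\cdot)$: the subgradients of $\lftc{\costu}$ lie in $\setc{U}$ and those of $\lftc{\epsilon}$ lie in $\dom(\epsilon) \subseteq \setc{X}$ (compact by Lemma~\ref{lem:eps cont}), giving $\lip\big(\lftc{\costu}(-B\tr\cdot)\big) \leq \norm{B}_2 \cdot \diam{\setc{U}}$ and $\lip(\lftc{\epsilon}) \leq \diam{\setc{X}}$, which together reproduce the bracketed factor of $e_\mathrm{y}$.

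The main obstacle is the last identification, namely that the distance $\dist(\partial \lftc{\phi}(\dynx(x)), \setg{Y})$ is the quantity $\dist(\partial(\dpo J - \costx)(x), \setg{Y})$ entering $e_\mathrm{y}$. The maximizer set $\partial \lftc{\phi}(\dynx(x))$ is precisely the set of optimal dual variables $y\opt$ of the DP problem at $x$, i.e., the multipliers of the equality constraint $z = \dyn(x,u)$ in~\eqref{eq:CDP op}; identifying these with the subdifferential of the optimal value $(\dpo J - \costx)$ requires the convex-duality (envelope) correspondence under Assumption~\ref{As:convex}, and is where the argument must be made carefully. Once this is in place, taking the supremum over $x \in \setd{X}$ of the sum of the two bounds gives the claimed inequality.
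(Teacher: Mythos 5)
Your proof follows essentially the same route as the paper's: the same splitting of $\lftc{\phi}\big(\dynx(x)\big)-\lftd{\varphi}\big(\dynx(x)\big)$ through the intermediate $\lftd{\phi}\big(\dynx(x)\big)$, the same use of Lemma~\ref{lem:conj non-exp} together with Lemma~\ref{lem:phi error} for the second term, and the same discrete-conjugation bound with the Lipschitz estimate $\lip(\phi)\leq \norm{B}_2\cdot\diam{\setc{U}}+\diam{\setc{X}}$ for the first. The identification you flag as delicate --- reading the argmax set $\partial\lftc{\phi}\big(\dynx(x)\big)$ as $\partial(\dpo J-\costx)(x)$ --- is exactly what the paper does, asserting it directly from $\lftc{\phi}\circ\dynx=\dpo J-\costx$ without further argument.
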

\begin{proof}
Let $x \in \setd{X}$. 
Also let $\disc{\phi}: \setg{Y} \ra \R$ be the discretization of $\phi: \R^n \ra \R$. 
Since $\phi$ is convex by construction, we can use~\cite[Lem.~2.5]{Kolari21dCDParxiv} to obtain (recall that $\lip ( h; \setc{X})$ denotes the Lipschtiz constant of $h$ restricted to the set $\setc{X} \subset \dom (h)$)
\begin{align}
0 \leq \lftc{\phi}\big(\dynx(x)\big) - \lftd{\phi}\big(\dynx(x)\big) &\leq \min\limits_{y \in \partial  \lftc{\phi}(\dynx(x))} \bigg\{ \big[ \norm{\dynx(x)}_2 + \lip \big( \phi; \{y\} \cup \setg{Y} \big) \big] \cdot \dist(y,\setg{Y}) \bigg\} \label{eq:Z4}
\end{align}
By using~\eqref{eq:CDP op TV} and the equivalence of DP and CDP operators we have $\lftc{\phi} \circ \dynx = \cdpo J-\costx = \dpo J-\costx$. Also, the definition~\eqref{eq:CDP op phi} implies that  
\begin{align*}
\lip ( \phi ) &\leq \lip \big( \lftc{\costu} \circ -B\tr \big) + \lip ( \lftc{\epsilon}) \leq  \norm{B}_2 \cdot \lip( \lftc{\costu} ) + \lip ( \lftc{\epsilon} ) \\
&\leq \norm{B}_2 \cdot \diam{\dom(\costu)} +  \diam{\dom(\epsilon)} \leq \norm{B}_2 \cdot \diam{\setc{U}} + \diam{\setc{X}},
\end{align*}
where for the last inequality we used the fact that $ \dom ( \epsilon ) \subseteq \dom (J) = \setc{X}$. 
Using these results in~\eqref{eq:Z4}, we have 
\begin{align}
0 \leq \lftc{\phi}\big(\dynx(x)\big) - \lftd{\phi}\big(\dynx(x)\big) &\leq \min\limits_{y \in \partial  (\dpo J-\costx) (x)} \bigg\{ \big[ \norm{\dynx(x)}_2 + \diam{\setc{X}} + \norm{B}_2 \diam{\setc{U}} \big] \cdot \dist(y,\setg{Y}) \bigg\} \nonumber \\ 
&\leq \big[ \diam{\dynx ( \setd{X} )} + \diam{\setc{X}} + \norm{B}_2 \cdot \diam{\setc{U}} \big] \cdot \max_{x' \in \setd{X}} \dist\big(\partial  (\dpo J-\costx) (x'),\setg{Y}\big) = e_\mathrm{y}. \label{eq:Z5}
\end{align}
Second, by Lemmas~\ref{lem:conj non-exp} and~\ref{lem:phi error}, we have
\begin{align}
\left| \lftd{\phi} (z) - \lftd{\varphi} (z) \right| \leq \norm{\disc{\phi} - \disc{\varphi}}_{\infty} \leq \gamma \cdot e_\mathrm{e} + e_\mathrm{u} + e_\mathrm{v} +  e_\mathrm{x}, \label{eq:Z6}
\end{align}
for all $z \in \R^n$, including $z = \dynx(x)$. 
Here, we are using the fact that $\dom(\disc{\phi}) = \dom(\disc{\varphi}) = \setg{Y}$ and  $\dom(\lftd{\phi}) = \dom(\lftd{\varphi}) = \R^n$. 
Combining inequalities~\eqref{eq:Z5} and \eqref{eq:Z6}, we obtain
\begin{align*}
\left| \lftc{\phi}\big(\dynx(x)\big) - \lftd{\varphi} \big(\dynx(x)\big) \right| &\leq \left| \lftc{\phi}\big(\dynx(x)\big) - \lftd{\phi}\big(\dynx(x)\big) \right| + \left|\lftd{\phi}\big(\dynx(x)\big) - \lftd{\varphi} \big(\dynx(x)\big) \right| \\
&\leq e_\mathrm{y} + \gamma \cdot e_\mathrm{e} + e_\mathrm{u} +  e_\mathrm{v} +  e_\mathrm{x}.
\end{align*}
This completes the proof.
\end{proof}

We are now left with the final step. 
Consider the output of the d-CDP operator~$\dcdpo \disc{J}: \setd{X} \ra \R$. Also, consider the output of the CDP operator $\cdpo J: \setc{X} \ra \R$ and its discretization $\disc{[\cdpo J]}: \setd{X} \ra \R$. 

\begin{Lem} \label{lem:error d-CDP}
We have
\begin{equation*}
\norm{\dcdpo \disc{J} - \disc{[\cdpo J]}}_{\infty} \leq  \gamma \cdot e_\mathrm{e} + e_\mathrm{u} + e_\mathrm{v} +  e_\mathrm{x} + e_\mathrm{y} + e_\mathrm{z} = \gamma \cdot e_\mathrm{e} + e_\mathrm{d} ,
\end{equation*}
where 
$$
e_\mathrm{z} = \diam{\setg{Y}} \cdot \dish \big(\dynx(\setd{X}), \setg{Z}\big).
$$
\end{Lem}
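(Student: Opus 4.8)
The plan is to reduce the statement to two pieces: the error already controlled by Lemma~\ref{lem:phi error 2}, and a fresh contribution $e_\mathrm{z}$ coming solely from the interpolative conjugation in the last step of the d-CDP operation. First I would subtract the two operators using their definitions \eqref{eq:CDP op TV} and \eqref{eq:d-CDP op TV}; since both carry the same additive term $\costx(x)$, this cancels and leaves, for each $x \in \setd{X}$,
\begin{equation*}
\dcdpo \disc{J}(x) - \disc{[\cdpo J]}(x) = \llerp{\lftdd{\varphi}}\big(\dynx(x)\big) - \lftc{\phi}\big(\dynx(x)\big).
\end{equation*}
Inserting $\pm \lftd{\varphi}(\dynx(x))$ and applying the triangle inequality splits the right-hand side into $|\llerp{\lftdd{\varphi}}(\dynx(x)) - \lftd{\varphi}(\dynx(x))|$ and $|\lftd{\varphi}(\dynx(x)) - \lftc{\phi}(\dynx(x))|$.

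The second term is exactly the quantity bounded in Lemma~\ref{lem:phi error 2} (taking the supremum over $x \in \setd{X}$), contributing $\gamma \cdot e_\mathrm{e} + e_\mathrm{u} + e_\mathrm{v} + e_\mathrm{x} + e_\mathrm{y}$, so no new work is needed there. The first term is the genuinely new object: the gap between the exact discrete conjugate $\lftd{\varphi}$ and its LERP reconstruction $\llerp{\lftdd{\varphi}}$ from the samples on $\setg{Z}$, evaluated at $\dynx(x)$. To control it I would first invoke Assumption~\ref{As:grids}-\ref{As:grid Z}, which guarantees $\dynx(\setd{X}) \subseteq \co(\setg{Z})$, so that every evaluation point lies inside the grid and we are in the \emph{interpolation} (not extrapolation) regime where LERP of a convex function is non-expansive and overestimating. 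Since $\lftd{\varphi}$ is a finite maximum of affine functions with slopes in $\setg{Y}$, it is convex with all subgradients contained in $\co(\setg{Y})$; the variation of these slopes --- and hence the relevant constant in the interpolation estimate --- is controlled by $\diam{\setg{Y}}$.

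With these ingredients in place, I would apply the convex LERP--conjugation estimate (the analogue of \cite[Cor.~2.7]{Kolari21dCDParxiv} already used to produce $e_\mathrm{v}$, now with the primal domain $\setg{Y}$ of $\disc{\varphi}$ playing the role previously played by $\setd{U}$) to obtain the pointwise bound $0 \le \llerp{\lftdd{\varphi}}(\dynx(x)) - \lftd{\varphi}(\dynx(x)) \le \diam{\setg{Y}} \cdot \dist(\dynx(x), \setg{Z})$. Taking the supremum over $x \in \setd{X}$ turns the right-hand side into $\diam{\setg{Y}} \cdot \dish(\dynx(\setd{X}), \setg{Z}) = e_\mathrm{z}$, and adding this to the Lemma~\ref{lem:phi error 2} contribution yields the claim.

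I expect the main obstacle to be this first term: establishing the sharp interpolation estimate with the nearest-grid-point distance $\dist(\dynx(x), \setg{Z})$ rather than a cruder cell-diameter bound, and correctly identifying $\diam{\setg{Y}}$ (the spread of admissible slopes) --- rather than the full Lipschitz constant $\max_{y \in \setg{Y}} \norm{y}_2$ --- as the right multiplicative constant. This is precisely where the convexity of $\lftd{\varphi}$ must be exploited carefully, and where restricting to the actual evaluation set $\dynx(\setd{X})$ (via Assumption~\ref{As:grids}-\ref{As:grid Z}) gives the stated sharper Hausdorff-distance form of $e_\mathrm{z}$ instead of the worst-case $\dish(\co(\setg{Z}), \setg{Z})$.
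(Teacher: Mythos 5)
Your proposal is correct and follows essentially the same route as the paper's proof: the same cancellation of $\costx$, the same insertion of $\pm\lftd{\varphi}(\dynx(x))$ with the triangle inequality, the bound from Lemma~\ref{lem:phi error 2} for one piece, and the interpolative-conjugation estimate of \cite[Cor.~2.7]{Kolari21dCDParxiv} under Assumption~\ref{As:grids}-\ref{As:grid Z} (with $\diam{\setg{Y}}$ as the constant) for the other. No gaps to report.
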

\begin{proof}
Let $x \in \setd{X}$. According to~\eqref{eq:CDP op TV} and ~\eqref{eq:d-CDP op TV}, we have
\begin{align}
\dcdpo \disc{J} (x) - \disc{[\cdpo J]} (x) = \dcdpo \disc{J} (x) - \cdpo J (x)  = \llerp{\lftdd{\varphi}}\big(\dynx(x)\big) - \lftc{\phi}\big(\dynx(x)\big) \label{eq:Z7}
\end{align}
Now, by Lemma~\ref{lem:phi error 2}, we have
\begin{align}
\left| \lftc{\phi}\big(\dynx(x)\big) - \lftd{\varphi} \big(\dynx(x)\big) \right| \leq \gamma \cdot e_\mathrm{e} + e_\mathrm{u} + e_\mathrm{v} + e_\mathrm{x} + e_\mathrm{y}. \label{eq:Z8}
\end{align}
Moreover, Assumption~\ref{As:grids}-\ref{As:grid Z} allows us to use~\cite[Cor.~2.7]{Kolari21dCDParxiv} and obtain
\begin{align}
0 \leq \llerp{\lftdd{\varphi}}\big(\dynx(x)\big) - \lftd{\varphi} \big(\dynx(x)\big) &\leq \diam{\setg{Y}} \cdot \dish \big(\dynx(\setd{X}), \setg{Z}\big) = e_\mathrm{z}. \label{eq:Z9}
\end{align}
Combining \eqref{eq:Z7}, \eqref{eq:Z8}, and \eqref{eq:Z9}, we then have
\begin{align*}
\left| \dcdpo \disc{J} (x) - \disc{[\cdpo J]} (x) \right| &= \left| \llerp{\lftdd{\varphi}}\big(\dynx(x)\big) - \lftc{\phi}\big(\dynx(x)\big) \right| \\
&\leq \left| \llerp{\lftdd{\varphi}}\big(\dynx(x)\big) - \lftd{\varphi} \big(\dynx(x)\big) \right| + \left| \lftd{\varphi} \big(\dynx(x)\big) - \lftc{\phi}\big(\dynx(x)\big) \right| \\
&\leq \gamma \cdot e_\mathrm{e} + e_\mathrm{u} + e_\mathrm{v} + e_\mathrm{x} + e_\mathrm{y} + e_\mathrm{z} .
\end{align*}
\end{proof}
The inequality~\eqref{eq:error d-CDP} then follows from Lemma~\ref{lem:error d-CDP} by noticing the equivalence of the DP and CDP operators.
\end{proof}

With the preceding result at hand, we can now provide a bound for the difference between the fixed points of the d-CDP and DP operators. 
To this end, let $\disc{\wh{J}\opt} = \dcdpo \disc{\wh{J}\opt}: \setd{X} \ra \R$ be the fixed point of the d-CDP operator. 
Recall that $J\opt = \dpo J\opt : \setc{X} \ra \R$ and $\disc{J\opt}: \setd{X} \ra \R$ are the true optimal value function and its discretization. 

\begin{Lem}[Error of fixed point of d-CDP operator] \label{lem:error d-CDP fp}
We have
\begin{equation*}\label{eq:error d-CDP cor}
\norm{\disc{\wh{J}\opt} - \disc{J\opt}}_{\infty} \leq \frac{\gamma \cdot e_\mathrm{e} +  e_\mathrm{d}}{1-\gamma}.
\end{equation*}
\end{Lem}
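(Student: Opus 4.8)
The plan is to use the standard fixed-point perturbation argument, leveraging the $\gamma$-contractiveness of $\dcdpo$ established in Theorem~\ref{thm:convergence} together with the one-step error bound of Proposition~\ref{prop:error d-CDP}. First I would insert the term $\dcdpo \disc{J\opt}$ and apply the triangle inequality:
\[
\norm{\disc{\wh{J}\opt} - \disc{J\opt}}_{\infty} \leq \norm{\disc{\wh{J}\opt} - \dcdpo \disc{J\opt}}_{\infty} + \norm{\dcdpo \disc{J\opt} - \disc{J\opt}}_{\infty}.
\]

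For the first term, I would use that $\disc{\wh{J}\opt}$ is the fixed point of $\dcdpo$, i.e. $\disc{\wh{J}\opt} = \dcdpo \disc{\wh{J}\opt}$, and then invoke the $\gamma$-contractiveness from Theorem~\ref{thm:convergence} to obtain $\norm{\dcdpo \disc{\wh{J}\opt} - \dcdpo \disc{J\opt}}_{\infty} \leq \gamma \norm{\disc{\wh{J}\opt} - \disc{J\opt}}_{\infty}$. For the second term, I would use that $J\opt$ is the fixed point of the DP operator, so $J\opt = \dpo J\opt$ and hence $\disc{J\opt} = \disc{[\dpo J\opt]}$; applying Proposition~\ref{prop:error d-CDP} with $J = J\opt$ then yields $\norm{\dcdpo \disc{J\opt} - \disc{[\dpo J\opt]}}_{\infty} \leq \gamma \, e_\mathrm{e} + e_\mathrm{d}$. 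Combining the two bounds gives
\[
(1-\gamma)\,\norm{\disc{\wh{J}\opt} - \disc{J\opt}}_{\infty} \leq \gamma \, e_\mathrm{e} + e_\mathrm{d},
\]
and, since $\gamma \in (0,1)$, dividing by $1-\gamma$ delivers the claimed inequality.

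The main obstacle is verifying that Proposition~\ref{prop:error d-CDP} is actually applicable to the choice $J = J\opt$: its hypotheses require $J\opt$ to be Lipschitz continuous and convex, and to satisfy Assumption~\ref{As:extension op}-\ref{As:ext error}. The extension-error bound for $J\opt$ is assumed directly in the statement of Theorem~\ref{thm:error}, and convexity of $J\opt$ follows from Assumption~\ref{As:convex}, under which the DP operator preserves convexity and therefore its unique fixed point is convex. The delicate point is Lipschitz continuity of $J\opt$, which I would establish from the Lipschitz continuity of the stage cost and dynamics in Assumption~\ref{As:prob data} together with the compactness of $\setc{X}$ and $\setc{U}$: one shows that $\dpo$ maps Lipschitz functions to Lipschitz functions with a uniformly controlled constant, and then passes this property to the fixed point via the contraction. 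With these three regularity facts in hand, Proposition~\ref{prop:error d-CDP} applies verbatim and the argument above closes.
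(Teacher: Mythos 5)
Your proposal is correct and follows essentially the same route as the paper: insert $\dcdpo \disc{J\opt}$, apply the triangle inequality, bound the first term by $\gamma$-contractiveness (Theorem~\ref{thm:convergence}) and the second by Proposition~\ref{prop:error d-CDP} applied to $J = J\opt = \dpo J\opt$, then rearrange and divide by $1-\gamma$. Your care in checking that $J\opt$ satisfies the hypotheses of Proposition~\ref{prop:error d-CDP} (Lipschitz continuity, convexity, and Assumption~\ref{As:extension op}-\ref{As:ext error}) matches the paper, which likewise invokes Assumptions~\ref{As:prob data} and \ref{As:convex} for the regularity of $J\opt$ and takes the extension-error condition as assumed in Theorem~\ref{thm:error}.
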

\begin{proof}
By Assumptions~\ref{As:grids}-\ref{As:grid Z} and \ref{As:extension op}-\ref{As:ext nonexp}, the operator~$\dcdpo$ is $\gamma$-contractive (Theorem~\ref{thm:convergence}) and hence 
$$\norm{\dcdpo \disc{\wh{J}\opt} - \dcdpo \disc{J\opt}}_{\infty} \leq \gamma \cdot \norm{\disc{\wh{J}\opt} - \disc{J\opt}}_{\infty}.$$ 
Also, notice that Assumptions~\ref{As:prob data} and \ref{As:convex} imply that $J\opt$ is Lipschitz continuous and convex. 
Moreover, $J\opt$ is assumed to satisfy the condition of Assumption~\ref{As:extension op}-\ref{As:ext error}. 
Hence, by Proposition~\ref{prop:error d-CDP}, we have
$$
\norm{ \dcdpo \disc{J\opt} - \disc{[\dpo J\opt]}}_{\infty} \leq \gamma \cdot e_\mathrm{e} + e_\mathrm{d}.
$$
Using these two inequalities, we can then write
\begin{align*}
\norm{\disc{\wh{J}\opt} - \disc{J\opt}}_{\infty} &= \norm{\disc{\wh{J}\opt} - \dcdpo \disc{J\opt} + \dcdpo \disc{J\opt} - \disc{J\opt}}_{\infty}  \\
&\leq \norm{\disc{\wh{J}\opt} - \dcdpo \disc{J\opt}}_{\infty} + \norm{ \dcdpo \disc{J\opt} - \disc{J\opt}}_{\infty}  \\
&= \norm{\dcdpo \disc{\wh{J}\opt} - \dcdpo \disc{J\opt}}_{\infty} + \norm{ \dcdpo \disc{J\opt} - \disc{[\dpo J\opt]}}_{\infty}. \\
&\leq \gamma \cdot \norm{\disc{\wh{J}\opt} - \disc{J\opt}}_{\infty} + \gamma \cdot e_\mathrm{e} + e_\mathrm{d}.
\end{align*}
This completes the proof.
\end{proof}

Finally, we can use the fact that $\dcdpo$ is $\gamma$-cantractive to provide the following bound on the error due to finite termination of the algorithm. 
Recall that $\disc{\wh{J}} : \setd{X} \ra \R$ is the output of Algorithm~\ref{alg:d-CDP separ cost}. 

\begin{Lem}[Error of finite termination]\label{lem:error termination}
We have
\begin{equation*}\label{eq:error termination}
\norm{\disc{\wh{J}} - \disc{\wh{J}\opt}}_{\infty} \leq \frac{\gamma \cdot e_\mathrm{t}}{1-\gamma}.
\end{equation*}
\end{Lem}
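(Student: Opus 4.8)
The statement is an \emph{a posteriori} error bound for the contractive fixed-point iteration $\disc{J}_{+} = \dcdpo \disc{J}$, so the plan is to combine the termination criterion with the $\gamma$-contractiveness of $\dcdpo$ established in Theorem~\ref{thm:convergence}. The first task is bookkeeping: I would read off from the \texttt{while} loop of Algorithm~\ref{alg:d-CDP separ cost} that, upon termination, the output satisfies $\disc{\wh{J}} = \dcdpo \disc{J}$, where $\disc{J}$ is the immediately preceding iterate, and that the exit condition (line~\ref{line_alg2:terminate}) guarantees $\norm{\disc{\wh{J}} - \disc{J}}_{\infty} < e_\mathrm{t}$. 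The three ingredients I would then invoke are: $\dcdpo$ is $\gamma$-contractive (Theorem~\ref{thm:convergence}, whose hypotheses are assumed throughout this part of the argument); $\disc{\wh{J}\opt}$ is by definition the fixed point $\disc{\wh{J}\opt} = \dcdpo \disc{\wh{J}\opt}$; and $\disc{\wh{J}} = \dcdpo \disc{J}$ with the one-step residual bounded by $e_\mathrm{t}$.

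Next I would bound the distance from the penultimate iterate to the fixed point. Writing $\disc{J} - \disc{\wh{J}\opt} = \big(\disc{J} - \dcdpo \disc{J}\big) + \big(\dcdpo \disc{J} - \dcdpo \disc{\wh{J}\opt}\big)$ and applying the triangle inequality together with $\gamma$-contractiveness on the second term gives
\begin{equation*}
\norm{\disc{J} - \disc{\wh{J}\opt}}_{\infty} \leq \norm{\disc{J} - \dcdpo \disc{J}}_{\infty} + \gamma \cdot \norm{\disc{J} - \disc{\wh{J}\opt}}_{\infty},
\end{equation*}
hence, after rearranging and using $\dcdpo \disc{J} = \disc{\wh{J}}$ and the termination bound $\norm{\disc{J} - \disc{\wh{J}}}_{\infty} \leq e_\mathrm{t}$,
\begin{equation*}
\norm{\disc{J} - \disc{\wh{J}\opt}}_{\infty} \leq \frac{\norm{\disc{J} - \disc{\wh{J}}}_{\infty}}{1-\gamma} \leq \frac{e_\mathrm{t}}{1-\gamma}.
\end{equation*}

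Finally, I would apply one further contraction step to transfer this estimate from the penultimate iterate $\disc{J}$ to the actual output $\disc{\wh{J}} = \dcdpo \disc{J}$: using the fixed-point property of $\disc{\wh{J}\opt}$,
\begin{equation*}
\norm{\disc{\wh{J}} - \disc{\wh{J}\opt}}_{\infty} = \norm{\dcdpo \disc{J} - \dcdpo \disc{\wh{J}\opt}}_{\infty} \leq \gamma \cdot \norm{\disc{J} - \disc{\wh{J}\opt}}_{\infty} \leq \frac{\gamma \cdot e_\mathrm{t}}{1-\gamma},
\end{equation*}
which is the claimed bound. The argument is entirely routine; the only point requiring care—and thus the main (minor) obstacle—is the bookkeeping in the first paragraph, namely correctly identifying that the output $\disc{\wh{J}}$ is the \emph{image} $\dcdpo \disc{J}$ rather than the iterate $\disc{J}$ at which the residual test is evaluated, since it is precisely this one extra application of $\dcdpo$ that produces the factor $\gamma$ in the numerator (as opposed to the bare $e_\mathrm{t}/(1-\gamma)$).
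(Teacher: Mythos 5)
Your proof is correct and takes essentially the same route as the paper's: both combine the $\gamma$-contractiveness of $\dcdpo$ with the termination residual via a triangle-inequality decomposition and a rearrangement of a self-referential bound. The only (cosmetic) difference is that you derive the a posteriori estimate at the penultimate iterate $\disc{J}$ and then apply one further contraction, whereas the paper performs the decomposition directly at the output $\disc{J}_{k+1}$; both correctly produce the factor $\gamma$ in the numerator.
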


\begin{proof}
By Assumptions~\ref{As:grids}-\ref{As:grid Z} and~\ref{As:extension op}-\ref{As:ext nonexp}, the operator~$\dcdpo$ is $\gamma$-contractive (Theorem~\ref{thm:convergence}). 
Let us assume that Algorithm~\ref{alg:d-CDP separ cost} terminates after $k \geq 0$ iterations so that $\disc{\wh{J}} = \disc{J}_{k+1}$ and $\norm{\disc{J}_{k+1} - \disc{J}_k}_{\infty} \leq e_\mathrm{t}$. 
Then,
\begin{align*}
\norm{\disc{\wh{J}} - \disc{\wh{J}\opt}}_{\infty} &= \norm{\disc{J}_{k+1} - \dcdpo \disc{J}_{k+1} + \dcdpo \disc{J}_{k+1} -\disc{\wh{J}\opt}}_{\infty} \\
&\leq \norm{\disc{J}_{k+1} - \dcdpo \disc{J}_{k+1}}_{\infty} + \norm{ \dcdpo \disc{J}_{k+1} -\disc{\wh{J}\opt}}_{\infty} \\
&= \norm{\dcdpo \disc{J}_{k} - \dcdpo \disc{J}_{k+1}}_{\infty} + \norm{ \dcdpo \disc{J}_{k+1} -\dcdpo \disc{\wh{J}\opt}}_{\infty} \\
&\leq \gamma \cdot \norm{\disc{J}_{k} - \disc{J}_{k+1}}_{\infty} + \gamma \cdot \norm{ \disc{J}_{k+1} -\disc{\wh{J}\opt}}_{\infty} \\
&\leq \gamma \cdot e_\mathrm{t} + \gamma \norm{ \disc{\wh{J}} -\disc{\wh{J}\opt}}_{\infty},
\end{align*}
where for the second inequality we used the fact that $\dcdpo$ is a contraction.
\end{proof}

The inequality~\eqref{eq:error} is then derived by combining the results of Lemmas~\ref{lem:error d-CDP fp} and \ref{lem:error termination}.


\bibliographystyle{apalike} 
\begin{small}
\bibliography{ref}

\begin{thebibliography}{}

\bibitem[Achdou et~al., 2014]{Achdou14}
Achdou, Y., Camilli, F., and Corrias, L. (2014).
\newblock On numerical approximation of the {H}amilton-{J}acobi-transport
  system arising in high frequency approximations.
\newblock {\em Discrete \& Continuous Dynamical Systems-Series B}, 19(3).

\bibitem[Akian et~al., 2008]{Akian08}
Akian, M., Gaubert, S., and Lakhoua, A. (2008).
\newblock The max-plus finite element method for solving deterministic optimal
  control problems: Basic properties and convergence analysis.
\newblock {\em SIAM Journal on Control and Optimization}, 47(2):817--848.

\bibitem[Bach, 2019]{Bach19}
Bach, F. (2019).
\newblock Max-plus matching pursuit for deterministic {M}arkov decision
  processes.
\newblock {\em arXiv preprint arXiv:1906.08524}.

\bibitem[Bauschke and Combettes, 2017]{Bauschke17}
Bauschke, H.~H. and Combettes, P.~L. (2017).
\newblock {\em Convex analysis and monotone operator theory in Hilbert spaces}.
\newblock Springer, New York, NY, 2nd edition.

\bibitem[Bellman and Karush, 1962]{Bell62}
Bellman, R. and Karush, W. (1962).
\newblock Mathematical programming and the maximum transform.
\newblock {\em Journal of the Society for Industrial and Applied Mathematics},
  10(3):550--567.

\bibitem[{Berthier} and {Bach}, 2020]{Bach20}
{Berthier}, E. and {Bach}, F. (2020).
\newblock Max-plus linear approximations for deterministic continuous-state
  markov decision processes.
\newblock {\em IEEE Control Systems Letters}, pages 1--1.

\bibitem[Bertsekas, 1973]{Bertsekas73}
Bertsekas, D. (1973).
\newblock Linear convex stochastic control problems over an infinite horizon.
\newblock {\em IEEE Transactions on Automatic Control}, 18(3):314--315.

\bibitem[Bertsekas, 2007]{Bertsekas07}
Bertsekas, D.~P. (2007).
\newblock {\em Dynamic Programming and Optimal Control, Vol. II}.
\newblock Athena Scientific, Belmont, MA, 3rd edition.

\bibitem[Bertsekas, 2009]{Bertsekas09}
Bertsekas, D.~P. (2009).
\newblock {\em Convex Optimization Theory}.
\newblock Athena Scientific, Belmont, MA.

\bibitem[Bertsekas, 2019]{Bertsekas19}
Bertsekas, D.~P. (2019).
\newblock {\em Reinforcement Learning and Optimal Control}.
\newblock Athena Scientific, Belmont, MA.

\bibitem[Busoniu et~al., 2017]{Busoniu17}
Busoniu, L., Babuska, R., De~Schutter, B., and Ernst, D. (2017).
\newblock {\em Reinforcement learning and dynamic programming using function
  approximators}.
\newblock CRC press.

\bibitem[Carpio and Kamihigashi, 2020]{Caprio16}
Carpio, R. and Kamihigashi, T. (2020).
\newblock Fast value iteration: an application of {L}egendre-{F}enchel duality
  to a class of deterministic dynamic programming problems in discrete time.
\newblock {\em Journal of Difference Equations and Applications},
  26(2):209--222.

\bibitem[Contento et~al., 2015]{Contento15}
Contento, L., Ern, A., and Vermiglio, R. (2015).
\newblock A linear-time approximate convex envelope algorithm using the double
  {L}egendre--{F}enchel transform with application to phase separation.
\newblock {\em Computational Optimization and Applications}, 60(1):231--261.

\bibitem[Corrias, 1996]{Corrias96}
Corrias, L. (1996).
\newblock Fast {L}egendre-{F}enchel transform and applications to
  {H}amilton-{J}acobi equations and conservation laws.
\newblock {\em SIAM Journal on Numerical Analysis}, 33(4):1534--1558.

\bibitem[Costeseque and Lebacque, 2014]{Costes14}
Costeseque, G. and Lebacque, J.-P. (2014).
\newblock A variational formulation for higher order macroscopic traffic flow
  models: Numerical investigation.
\newblock {\em Transportation Research Part B: Methodological}, 70:112 -- 133.

\bibitem[Esogbue and Ahn, 1990]{Esog90}
Esogbue, A.~O. and Ahn, C.~W. (1990).
\newblock Computational experiments with a class of dynamic programming
  algorithms of higher dimensions.
\newblock {\em Computers \& Mathematics with Applications}, 19(11):3 -- 23.

\bibitem[Felzenszwalb and Huttenlocher, 2012]{Felzen12}
Felzenszwalb, P.~F. and Huttenlocher, D.~P. (2012).
\newblock Distance transforms of sampled functions.
\newblock {\em Theory of computing}, 8(1):415--428.

\bibitem[Jacobs and L{\'e}ger, 2020]{Jacobs19}
Jacobs, M. and L{\'e}ger, F. (2020).
\newblock A fast approach to optimal transport: The back-and-forth method.
\newblock {\em Numerische Mathematik}, 146(3):513--544.

\bibitem[Klein and Morin, 1991]{Klein91}
Klein, C.~M. and Morin, T.~L. (1991).
\newblock Conjugate duality and the curse of dimensionality.
\newblock {\em European Journal of Operational Research}, 50(2):220 -- 228.

\bibitem[Kolarijani et~al., 2020]{Arman20}
Kolarijani, A.~S., Bregman, S.~C., Mohajerin~Esfahani, P., and Keviczky, T.
  (2020).
\newblock A decentralized event-based approach for robust model predictive
  control.
\newblock {\em IEEE Transactions on Automatic Control}, 65(8):3517--3529.

\bibitem[Kolarijani and Esfahani, 2021]{Kolari21dCDParxiv}
Kolarijani, M. A.~S. and Esfahani, P.~M. (2021).
\newblock Fast approximate dynamic programming for input-affine dynamics.
\newblock {\em preprint arXiv:2008.10362}.

\bibitem[Kolarijani and Mohajerin~Esfahani, 2021a]{Kolari21ConjVIMAT}
Kolarijani, M. A.~S. and Mohajerin~Esfahani, P. (2021a).
\newblock Conjugate value iteration {(ConjVI)} {MATLAB} package.
\newblock Licensed under the MIT License, available online at
  \url{https://github.com/AminKolarijani/ConjVI}.

\bibitem[Kolarijani and Mohajerin~Esfahani, 2021b]{Kolari21dCDPMAT}
Kolarijani, M. A.~S. and Mohajerin~Esfahani, P. (2021b).
\newblock Discrete conjugate dynamic programming {(d-CDP)} {MATLAB} package.
\newblock Licensed under the MIT License, available online at
  \url{https://github.com/AminKolarijani/d-CDP}.

\bibitem[Lucet, 1997]{Lucet97}
Lucet, Y. (1997).
\newblock Faster than the fast {L}egendre transform, the linear-time {L}egendre
  transform.
\newblock {\em Numerical Algorithms}, 16(2):171--185.

\bibitem[Lucet, 2009]{Lucet09}
Lucet, Y. (2009).
\newblock New sequential exact {E}uclidean distance transform algorithms based
  on convex analysis.
\newblock {\em Image and Vision Computing}, 27(1):37 -- 44.

\bibitem[{McEneaney}, 2003]{McEn03}
{McEneaney}, W.~M. (2003).
\newblock Max-plus eigenvector representations for solution of nonlinear
  ${H}_{\infty}$ problems: basic concepts.
\newblock {\em IEEE Transactions on Automatic Control}, 48(7):1150--1163.

\bibitem[Powell, 2011]{Pow11}
Powell, W.~B. (2011).
\newblock {\em Approximate Dynamic Programming: Solving the Curses of
  Dimensionality}.
\newblock John Wiley \& Sons, Hoboken, NJ, 2nd edition.

\bibitem[Sidford et~al., 2018]{Sidford18}
Sidford, A., Wang, M., Wu, X., and Ye, Y. (2018).
\newblock Variance reduced value iteration and faster algorithms for solving
  {M}arkov decision processes.
\newblock In {\em Proceedings of the Twenty-Ninth Annual ACM-SIAM Symposium on
  Discrete Algorithms}, pages 770--787. SIAM.

\bibitem[Simons, 1995]{Simons95}
Simons, S. (1995).
\newblock Minimax theorems and their proofs.
\newblock In Du, D.-Z. and Pardalos, P.~M., editors, {\em Minimax and
  Applications}, pages 1--23. Springer US, Boston, MA.

\bibitem[Sutton and Barto, 2018]{Sutton18}
Sutton, R.~S. and Barto, A.~G. (2018).
\newblock {\em Reinforcement Learning: An Introduction}.
\newblock MIT Press.

\end{thebibliography}
\end{small}

\end{document}